\theoremstyle{plain}
\newtheorem{theorem}{Theorem}[section]
\newtheorem{proposition}[theorem]{Proposition}
\newtheorem{lemma}[theorem]{Lemma}
\newtheorem{corollary}[theorem]{Corollary}
\newtheorem*{theoremA}{Theorem A}
\newtheorem*{theoremB}{Theorem B}
\newtheorem*{theoremC}{Theorem C}
\newtheorem*{theoremD}{Theorem D}
\theoremstyle{definition}
\newtheorem{definition}[theorem]{Definition}
\newtheorem{example}[theorem]{Example}
\theoremstyle{remark}
\newtheorem{remark}[theorem]{Remark}
\numberwithin{equation}{section}
\renewcommand{\labelenumi}{\textit{(\roman{enumi})}}
\newcommand{\introd}[1]{\emph{#1}}%\index{#1}}
\let\Oldepsilon\epsilon
\let\Oldvarepsilon\varepsilon
  \renewcommand{\varepsilon}{\Oldepsilon}
  \renewcommand{\epsilon}{\Oldvarepsilon}
\let\Oldphi\phi
\let\Oldvarphi\varphi
  \renewcommand{\varphi}{\Oldphi}
  \renewcommand{\phi}{\Oldvarphi}
\newcommand{\Tau}{\mathrm{T}}
\newcommand{\Nu}{\mathrm{N}} %Index space for nets
\newcommand{\N}{\mathbb{N}}  % Numeri naturali
\newcommand{\Z}{\mathbb{Z}}  % Numeri interi
\newcommand{\C}{\mathbb{C}}  % Numeri complessi
\newcommand{\R}{\mathbb{R}}  % Numeri reali
\renewcommand{\Re}{\operatorname{Re}}
\renewcommand{\Im}{\operatorname{Im}}
\newcommand{\Trans}{\operatorname{Trans}}  % Span
\newcommand{\Span}{\operatorname{Span}}  % Span
\newcommand{\Ran}{\operatorname{Ran}}  % Ker
\newcommand{\Dom}{\operatorname{Dom}}  % Ker
\newcommand{\Ker}{\operatorname{Ker}}  % Ker
\newcommand{\diam}{\operatorname{diam}}
\newcommand{\Sad}{\operatorname{ad_{s}}}
\newcommand{\SIad}{\operatorname{ad_{si}}}
\newcommand{\SRad}{\operatorname{ad_{sr}}}
\newcommand{\Nqad}{\operatorname{qad_{n}}}
\newcommand{\qad}{\operatorname{qad}}
\newcommand{\SRqad}{\operatorname{qad_{sr}}}
\newcommand{\Nad}{\operatorname{ad_{n}}}
\newcommand{\nil}{\operatorname{nil}}
\newcommand{\rad}{\operatorname{rad}}
\newcommand{\rrad}{\operatorname{rrad}}
\newcommand{\trace}{\operatorname{trace}}
\newcommand{\Iso}{\operatorname{Iso}}
\newcommand{\ad}{\operatorname{ad}}
\newcommand{\Ad}{\operatorname{Ad}}
\newcommand{\Aut}{\operatorname{Aut}}
\newcommand{\Der}{\operatorname{der}}
\newcommand{\pinfty}{+\infty}  %oppure {\infty}
\newcommand{\expe}{\mathrm{e}}
\newcommand{\barB}{\breve B}             %ball with \leq r
\newcommand{\afterbar}{\overline{\phantom{x}}}
\newcommand{\Jyvaskyla}{Jyv\"a\-skyl\"a}
\begin{document}

\title{From homogeneous metric spaces to Lie groups}

\author[Cowling]{Michael G. Cowling}
\address{School of Mathematics and Statistics\\ University of New South Wales\\ UNSW Sydney 2052\\ Australia}
\email{m.cowling@unsw.edu.au}
\thanks{MGC acknowledges support received from the Australia Research Council (DP140100531 and DP170103025).}

\author[Kivioja]{Ville Kivioja}
\address{Department of Mathematics and Statistics\\ University of \Jyvaskyla\\ \Jyvaskyla FI-40014 Finland}
\email{ville.k.kivioja@jyu.fi}
\thanks{VK was partly supported by the Emil Aaltonen Foundation.}

\author[Le Donne]{Enrico Le Donne}
\address{Department of Mathematics and Statistics\\ University of \Jyvaskyla\\ \Jyvaskyla FI-40014 Finland}
\curraddr{Department of Mathematics\\ University of Fribourg\\
Fribourg CH-1700 Switzerland}
\email{enrico.ledonne@unifr.ch}
\thanks{ELD was partially supported by the Academy of Finland (grants 288501 ``Geometry of subRiemannian groups'' and 322898 ``Sub-Riemannian Geometry via Metric-geometry and Lie-group Theory''),
and by the European Research Council (ERC Starting Grant 713998 GeoMeG ``Geometry of Metric Groups'').}

\author[Nicolussi Golo]{Sebastiano Nicolussi Golo}
\address{Department of Mathematics and Statistics\\ University of \Jyvaskyla\\ \Jyvaskyla FI-40014 Finland}
\email{sebastiano.s.nicolussi-golo@jyu.fi}
\thanks{SNG was supported by
the Academy of Finland (grant 322898 ``Sub-Riemannian Geometry via Metric-geometry and Lie-group Theory'');
the University of Padova STARS Project ``Sub-Riemannian Geometry and Geometric Measure Theory Issues: Old and New'';
the INdAM--GNAMPA Project 2019 ``Rectifiability in Carnot groups'';
the EPSRC Grant "Sub-Elliptic Harmonic Analysis" (EP/P002447/1);
and the Marie Curie Actions-Initial Training Network ``Metric Analysis For Emergent Technologies (MAnET)'' (n.~607643).}

\author[Ottazzi]{Alessandro Ottazzi}
\address{School of Mathematics and Statistics\\ University of New South Wales\\ UNSW Sydney 2052\\ Australia}
\email{a.ottazzi@unsw.edu.au}
\thanks{}

\subjclass[2020]{Primary 53C30, Secondary 22F30, 20F69, 22E25}
%53C30, % Homogeneous manifolds
%22F30, % Homogeneous spaces
%20F69, %  (2000-now) Asymptotic properties of groups
%22E25% -- Nilpotent and solvable Lie groups

\keywords{Homogeneous spaces, Structure, Lie groups}

% ----------------------------------------------------------------
\begin{abstract}
We study homogeneous metric spaces, by which we mean connected, locally compact metric spaces whose isometry group acts transitively.

After a review of a number of classical results, we use the Gleason--Iwasawa--Montgomery--Yamabe--Zippin structure theory to show that for all positive \( \epsilon \), each such space is \( (1,\epsilon) \)-quasi-isometric to a connected metric Lie group.

Next, we develop the structure theory of Lie groups to show that every homogeneous metric manifold is homeomorphically roughly isometric to a quotient space of a connected amenable Lie group, and roughly isometric to a simply connected solvable metric Lie group.

Third, we investigate solvable metric Lie groups in more detail, and expound on and extend work of Gordon and Wilson \cite{Gordon-Wilson-fine, Gordon-Wilson-solvmanifolds} and Jablonski \cite{Jablonski} on these, showing, for instance, that connected solvable Lie groups may be made isometric if and only if they have the same real-shadow.

Finally, we show that homogeneous metric spaces that admit a metric dilation are all metric Lie groups with an automorphic dilation.
\end{abstract}

\maketitle
% ----------------------------------------------------------------

\section{Introduction}
In this paper we present some links between Lie theory and metric geometry.
We consider \emph{homogeneous metric spaces}, that is, metric spaces whose isometry groups act transitively, subject to a number of standing assumptions:
\begin{enumerate}\renewcommand{\labelenumi}{(\alph{enumi})}
  \item homogeneous metric spaces are \emph{connected and locally compact}, unless explicitly stated otherwise;
  \item a metric means a distance function unless it is preceded by  \emph{infinitesimal}; and
  \item metrics are \emph{admissible}, that is, compatible with the topology of the under\-lying space.
\end{enumerate}
However, we do \emph{not} assume that they are riemannian, or geodesic, or quasigeodesic, or even proper.
If the metric space is also a topological manifold, and the metric topology and manifold topology coincide, then we write \emph{metric manifold}.
We consider locally compact groups and Lie groups equipped with admissible left-invariant metrics, which we call \emph{metric groups} and \emph{metric Lie groups}.

\tableofcontents

\subsection{Background}\label{ssec:overview}

Geometry and topology on Lie groups and their quotients have a very long history, which we cannot even begin to survey here; rather, we refer the reader to Helgason \cite{Helgason-DGLGSS}, Kobayashi and Nomizu \cite{Kobayashi-Nomizu-1, Kobayashi-Nomizu-2} and Samelson \cite{Samelson}.
Nevertheless, there are a few milestones that are specially relevant for this paper, namely Milnor \cite{Milnor-note}, Wolf \cite{Wolf-Growth}, Alekseevski\u\i\ \cite{Alekseevskii}, Wilson \cite{Wilson-homogeneous} and Gordon and Wilson \cite{Gordon-Wilson-fine, Gordon-Wilson-solvmanifolds}; in these papers and the texts cited previously, Lie groups and their quotients are considered as models for \emph{riemannian} manifolds.

There are very good reasons to consider Lie groups and their quotients with more general metrics.
These appear naturally in studying rigidity of symmetric spaces (see Mostow \cite{Mostow-Ridigity} and Pansu \cite{Pansu-Metriques}), regularity of subelliptic operators (see Folland and Stein \cite{Folland-Stein-dbar} and Rothschild and Stein \cite{Rothschild-Stein-hypoelliptic}), and asymptotic properties of nilpotent groups (see Gromov \cite{Gromov-poly-growth, Gromov_CCfromwithin} and Pansu \cite{Pansu-croissance}).
Negatively curved homogeneous riemannian manifolds, classified by Heintze \cite{Heintze}, have parabolic visual boundaries that are self-similar Lie groups with metrics that are not always riemannian.
The restriction to a connected closed subgroup of a riemannian metric need not be riemannian, or even geodesic.
For more information on these developments, see Montgomery \cite{Montgomery-Tour}, Cornulier and de la Harpe
\cite{Cornulier-Harpe-Geometry}, and Dungey, ter Elst and Robinson \cite{Dungey-Elst-Robinson}.

The prototypical examples of homogeneous metric spaces are connected locally compact groups with left-invariant metrics.
Solvable and nilpotent Lie groups, including the stratified groups of Folland \cite{Folland} or Carnot groups of Pansu \cite{Pansu-these}, are particularly nice examples.
Starting with these, one may obtain new examples by considering \( \ell^p \) products, passing to subgroups and quotients, and composing the metric with concave functions, as in the ``snowflake'' construction.

\subsection{Main results and contents}\label{ssec:main-results}

Chapter \ref{sec:prel} reviews the basic facts about homogeneous metric spaces and their isometry groups.
In more detail, we consider the realisation of homogeneous metric spaces as coset spaces of almost connected locally compact isometry groups, we describe various constructions to produce new metric spaces from old, and we discuss polynomial growth and doubling properties.
Because we allow metrics that are not proper or quasigeodesic, we observe some paradoxical phenomena, such as metric groups that are of polynomial growth as groups but not as metric spaces.
The introduction to Chapter \ref{sec:prel} provides more information.

Chapter \ref{sec:Lie-theory-metric-space} focusses on the use of Lie theory.
In dealing with general rather than riemannian metrics on Lie groups, what happens at the Lie algebra level may not determine what happens at the group level, and so the global approach is to be preferred.
That being said, however, the theory is similar in the riemannian and in the general cases.

Our first theorem follows from the Gleason--Iwasawa--Montgomery--Yamabe--Zippin structure theory of almost connected locally compact groups.

\begin{theoremA}
Let \( M \) be a homogeneous metric space.
Then \( M \) is
\begin{enumerate}
\item
\( (1, \epsilon) \)-quasi-isometric to a connected metric Lie group \( G_\epsilon \), for all positive \( \epsilon \), and
\item
{roughly isometric to a contractible metric manifold.}

\end{enumerate}
\end{theoremA}

We prove an extended version of Theorem A as Theorem \ref{thm:main-1}.

Our contributions here are the observations that quasi-isometry may be sharpened to rough isometry and the additive constant in (a) may be made arbitrarily small.
For fundamental groups of compact riemannian manifolds, part (b) was shown by \v{S}varc \cite{Svarc} and rediscovered by Milnor \cite{Milnor-note}.
More recently it has been extended, with quasi-isometry rather than rough isometry, to the case of {quasi}geodesic metrics and to spaces of polynomial growth: see \cite[Theorem 4.C.5.]{Cornulier-Harpe-Geometry} and \cite[Proposition 1.3]{Breuillard-Geometry}.

One of our aims is to study the following relation between topological groups.
Given two topological groups \( G \) and \( H \), we say that \emph{\( G \) may be made isometric to \( H \)} if there exist admissible left-invariant metrics \( d_G \) and \( d_H \) such that the metric spaces \( (G, d_G)  \) and \( (H, d_H) \) are isometric.
Moreover, if \( G \) is already a metric group, then we may impose the extra condition that the new metric is roughly isometric to the initial one; in this case, the Gromov--Hausdorff distance of the new metric space from the original one is finite.

Our next theorem, which relies heavily on the Levi and Iwasawa decompositions, shows that every homogeneous metric manifold may be made isometric to a compact quotient of a direct product of a solvable and a compact Lie group.

\begin{theoremB}
Let \( (M,d) \) be a homogeneous metric manifold.
Then there is a metric \( d' \) on \( M \) such that the identity mapping from \( (M,d) \) to \( (M, d') \) is a homeo\-morphic rough isometry, and {there is a transitive closed connected amenable subgroup \( A \) of \( \Iso(M,d') \); hence \( M \) is homeomorphic to \( A/K \), where \( K \) is a compact subgroup of \( A \).}
\end{theoremB}

We prove an extended version of Theorem B as Theorem \ref{thm:main-2}.

We believe that Theorem B is new, though it may have been known but not published.
Much is known about the isometry of riemannian symmetric spaces and riemannian solvmanifolds; but we are not aware of a complete treatment of the general case.
Gordon and Wilson \cite{Gordon-Wilson-fine, Gordon-Wilson-solvmanifolds} certainly came close to this, and promised a solution to the general case at the end of \cite{Gordon-Wilson-solvmanifolds}, but as far as we know this proposed paper did not eventuate.

In various special cases, we obtain simpler and more explicit results; see Corollaries \ref{cor:main-thm-2-Riemannian}, \ref{cor04111226}, and \ref{cor:allsemisimple_are_NAxK}.
Corollary \ref{cor:main-thm-2-Riemannian} is of particular interest: there we consider riemannian homogeneous spaces and riemannian metrics.
In this case, the result of Theorem B holds with rough isometry replaced by bi-Lipschitz equivalence.
Bi-Lipschitz equivalence is stronger locally, but weaker globally, than rough isometry, and our Theorem B provides more information about the large scale behaviour of homogeneous spaces than the strictly riemannian version.
This is further evidence that consideration of more general metrics can unlock information that is not accessible in the riemannian framework.

In Chapter \ref{sec:solvable}, we examine solvable metric Lie groups.
We need more background, which we discuss in more detail later.
Auslander and Green \cite{Auslander-Green} discovered that a connected simply connected solvable Lie group \( G \) of polynomial growth could be embedded in a connected solvable Lie group \( H \) (the \emph{hull} of \( G \)), in such a way that
\[
H = G \rtimes T \qquad\text{and}\qquad H = N \rtimes T ,
\]
where \( T \) is a torus (a compact connected abelian Lie group) in \( H \), and \( N \) is the nilradical (the largest connected normal nilpotent subgroup) of \( H \).
Then \( G \) is homeomorphic to \( N \), since both may be identified with \( H/T \), and \( G \) and \( N \) enjoy various similarities (see \cite{Auslander-Green, Alexopoulos}); \( N \) is known as the nilshadow of \( G \).
Gordon and Wilson~\cite{Gordon-Wilson-fine, Gordon-Wilson-solvmanifolds} considered this from a Lie algebraic point of view, and described \( G \) and \( N \) as modifications of each other; they considered general solvable Lie groups.
Recently, Cornulier \cite{Cornulier-dimension}, and very recently, Jablonski \cite{Jablonski} showed that every connected, simply connected solvable Lie group is homeomorphic to a split-solvable Lie group, which we call its real-shadow, in the same way as a connected, simply connected solvable group of polynomial growth is homeomorphic to its nilshadow.

We give a complete and coherent treatment of this recent development.
We then proceed to describe when solvable Lie groups may be made isometric.
Here is our third main theorem.

\begin{theoremC}
Let \( G_0 \) be a connected split-solvable Lie group, \( T \) be a maximal torus in \( \Aut(G_0) \), and \( d_0 \) be a \( T \)-invariant metric on \( G_0 \).
Let \( G_1 \) be a connected solvable Lie group.
Then the following are equivalent:
\begin{enumerate}
  \item \( G_1 \) may be made isometric to \( G_0 \);
  \item \( G_1 \) may be made isometric to \( (G_0,d_0) \);
  \item \( G_0 \) is the real-shadow of \( G_1 \); and
  \item \( G_1 \) may be embedded in \( H \coloneqq  G_0 \rtimes T \) in such a way that every element of \( h \) has a unique factorisation \( gt \), where \( g \in G_1 \) and \( t \in T \).
\end{enumerate}
\end{theoremC}

We prove an extended version of Theorem C as Theorem \ref{thm:main-3}.

While the results here are mostly known, our proofs are often different to and sometimes simpler than those of previous authors, and we believe that the reader will find it useful to have a clear account of this development.

Theorem C has various corollaries and extensions, some of which are due to Gordon and Wilson \cite{Gordon-Wilson-solvmanifolds} (for riemannian metrics) and Breuillard \cite{Breuillard-Geometry} (for the polynomial growth case).
First, the  metric \( d_0 \) on a connected split-solvable Lie group considered in Theorem C may be taken to be riemannian.
Next, if \( G_1 \) and \( G_2 \) are connected solvable Lie groups, then they may be made isometric if and only if they have the same real-shadow \( G_0 \), and in this case they may both be made isometric to \( (G_0, d_0) \).
In the special case in which \( G_0 \) is of polynomial growth, then \( G_0 \) is necessarily nilpotent, and so we obtain a characterisation of groups which may be made isometric to nilpotent Lie groups.

The classification of nilpotent groups up to quasi-isometry is an important unsolved problem.
Our result shows that if a connected Lie group admits one metric for which it is isometric to a nilpotent metric Lie group \( (N_1, d_1) \) and another for which it is isometric to another nilpotent metric Lie group \( (N_2, d_2) \), then necessarily \( N_1 \) and \( N_2 \) are isomorphic.

For more details and other results, see the discussion following the proof of Theorem C in Chapter \ref{sec:solvable}.

Finally, in Chapter \ref{sec:dil}, we discuss homogeneous metric spaces that admit metric dilations.
A map \( \delta:X\to Y \) between metric spaces is called a \emph{metric dilation} if \( \delta \) is bijective and \( d(\delta(x), \delta(x') ) =\lambda d(x, x') \)  for all  \( x, x'\in X \), for some \( \lambda\in (1, \pinfty) \), and a
\emph{metrically self-similar group} is a metric group \( (G, d) \) that admits a map \( \delta: G\to G \) that is both a metric dilation and an automorphism.
The stratified groups of Folland and Stein \cite{Folland-Stein-Hardy} with the Hebisch--Sikora metric \cite{Hebisch-Sikora}, the Carnot groups of Pansu \cite{Pansu-Metriques} and finite dimensional normed vector spaces are examples of metrically self-similar groups; so are the parabolic visual boundaries of the negatively curved connected homogeneous riemannian spaces described by Heintze \cite{Heintze}.
Our fourth main theorem described homogeneous metric spaces with dilations.

\begin{theoremD}
If a homogeneous metric space admits a metric dilation, then it is isometric to a metrically self-similar Lie group.
Moreover, all metric dilations of a metrically self-similar Lie group are automorphisms.
\end{theoremD}

Theorem D appears later as Theorem \ref{thm:main-4}.
It generalises a result of \cite{LeDonne-characterization}, where it is shown that a space is a sub-Finsler Carnot group if and only if the conditions in Theorem~D hold and the metric is geodesic.

As a consequence of \cite[Proposition 2.2]{Siebert} and \cite{Kivioja-LeDonne}, if a metric space \( M \) is isometric to a metrically self-similar Lie group \( (G, d') \), then \( G \) is a gradable, connected simply connected nilpotent Lie group isomorphic to the nilradical of \( \Iso(M) \).
However, \( M \) may also be isometric to a Lie group that is not nilpotent.
As discussed after Theorem~C, there are metric groups that are not nilpotent but which are isometric to metrically self-similar metric Lie groups; it follows from Theorem~D that if \( M \) is a metric Lie group and \(  \delta \) is a metric dilation, then \( \delta \) is an automorphism if and only if \( M \) is nilpotent.

While many of the results will be familiar to the experts, we included proofs if we could not find an explicit proof in the literature or if we could give an easier one.
We have not attempted to provide a full bibliography of all the areas that we touch on, but rather refer mainly to those papers that we use.
At the end of Chapters \ref{sec:prel} to \ref{sec:dil}, the reader will find some discussion of who did what and when, and of related results.
The reader may wish to consult some other works in this area, in particular, the books of Cornulier and de la Harpe \cite{Cornulier-Harpe-Geometry} for more information.
Recent papers, such as \cite{Fassler-LeDonne}, refer to other relevant recent works.

\subsection{Notation and conventions}\label{ssec:conventions}

We remind the reader of our convention that \emph{homogeneous metric spaces are connected and locally compact, unless explicitly stated otherwise}.
Metric manifolds, metric groups and metric Lie groups are examples of these.
Some of our results may be proved in greater generality, but this assumption will save space.

A set that is a \emph{neighbourhood} need not be open.
Locally compact groups are always locally compact Hausdorff topological groups.

The expression \emph{the isometry group} means the full isometry group, while \emph{an isometry group} means a closed subgroup of the full isometry group.

\emph{Constants} are always nonnegative real numbers that may vary from one occurrence to the next.
These are often denoted by \( C \) or \( \epsilon \), and we do not specify that these letters denote constants when they occur.

We denote by \( e_G \), or more simply \( e \), the \emph{identity element} of a group \( G \); the identity of \( G_1 \) may be denoted by \( e_1 \).

\subsection{Thanks}\label{ssec:thanks}
The research and writing of this work took many years, at least in part because the authors live in different continents.
During this period, most of the Europe-based authors visited Australia, and the Australia-based authors visited Europe, with support from various research grants, mentioned elsewhere, but also from the Universities of Jyv\"askyl\"a and of New South Wales.
In addition, the affiliations of the authors changed.
Apart from the institutions mentioned on the first page, Sebastiano Nicolussi Golo also worked at the Universities of Trento and Padova in Italy and Birmingham in the UK.
We thank the referee of a very preliminary version of this work for very many helpful comments that led to substantial improvements.

\section{Preliminaries}\label{sec:prel}
In this chapter, we recall some more or less familiar facts.
After introducing some notation in Section \ref{ssec:notation}, we discuss isometry groups of metric spaces, and in particular homogeneous metric spaces in Section \ref{ssec:isohom}, and examine the relations with coset spaces in Section \ref{ssec:metric-coset-spaces}.
Next we consider changes of metrics in Section \ref{ssec:new-metric}.
In Section \ref{ssec:simply-transitive}, we consider when there are simply transitive isometry groups, and finally, we discuss invariant measures, polynomial growth, and the doubling property in Section \ref{ssec:polgr}.
While these are very closely related in the case of \emph{proper quasigeodesic} metrics (see \cite{Cornulier-sublinear}), this is not the case for more general metrics, as we are going to see.

\subsection{Notation}\label{ssec:notation}

When \( (M, d) \) is a metric space, we sometimes write just \( M \), leaving the metric \( d \) implicit.
We denote by \( B(x, r) \) or \( B_d(x, r) \) the open ball \( \{ y \in M : d(x,y) < r\} \), and by \( \barB(x, r) \) or \( \barB_d(x, r) \) the closed ball \( \{ y \in M : d(x,y) \leq r\} \), which need not be the closure of the open ball \( B(x,r) \); set closure is denoted with a bar.
The metric space is said to be \emph{proper} if closed bounded sets are compact, or equivalently, if all balls \( \barB_d(x, r) \) are compact, and is said to be \emph{geodesic} if every pair of points may be joined by a curve whose (rectifiable) length is equal to the distance between the points.
Berestovski\u{\i} \cite{Berestovskii} showed that a homogeneous metric manifold is geodesic if and only if it is equipped with an invariant infinitesimal sub-Finsler metric.

A function \( f:(M_1, d_1)\to (M_2, d_2) \) is an \emph{\( (L, C) \)-quasi-isometry} \index{quasi-isometry} if
\[
L^{-1} d_1(x, y) - C \le d_2(f(x), f(y)) \le L d_1(x, y) + C
\]
for all \( x, y\in M_1 \), and for every \( z\in M_2 \) there is \( x\in M_1 \) such that \( d_2(f(x), z)\le C \).
If such a function exists between two metric spaces, then we say that they are \emph{\( (L, C) \)-quasi-isometric}, or more simply \emph{quasi-isometric}.

There is a zoo of equivalences of metric spaces that we might consider.
Quasi-isometry (for some choice of the constants \( L \) and \( C \), possibly depending on the function) is an equivalence relation.
If \( C = 0 \), then \( f \) is called \emph{bi-Lipschitz}; \index{bi-Lipschitz} bi-Lipschitz gives us another equivalence relation, which, in contrast to quasi-isometry, implies homeomorphism.
A third equivalence relation is \emph{rough isometry}, \index{rough isometry} which is defined to be \( (1,C) \)-quasi-isometry for a suitable choice of \( C \), which may depend on \( f \); we sometimes call \( C \) the implicit constant of a rough isometry.
This is finer than general quasi-isometry and more restrictive at large scales than bi-Lipschitz.
Yet another equivalence relation that we consider is homeomorphic rough isometry.
A fifth relation that we consider applies to topological rather than metric groups: we say that \( G_1 \) and \( G_2 \) \emph{may be made isometric} provided that there exist admissible left-invariant metrics \( d_1 \) and \( d_2 \) such that \( (G_1,d_1) \) and \( (G_2, d_2) \) are isometric.

\subsection{Homogeneous metric spaces}\label{ssec:isohom}
We define an \introd{isometry} of a metric space \( (M, d) \) to be a \emph{surjective} map \( f \) on \( M \) such that
\begin{equation}
\label{eq:metric_preserving}
d(f(x), f(y))=d(x, y) \qquad\forall x, y\in M.
\end{equation}
We denote by \( \Iso(M, d) \) the set of all isometries of \( (M, d) \); given the surjectivity, it is evident that \( \Iso(M,d) \) is a group under  composition.
We recall that a metric space \( (M, d) \) is said to be \introd{homogeneous} if its isometry group acts transitively, and our convention that a homogeneous metric space \( (M,d) \) is connected and locally compact, but not necessarily proper, unless explicitly stated otherwise.

Changing the metric on a space (without changing its topology) may change its isometry group.
For instance, we may equip \( \R^2 \) with any one of the bi-Lipschitz equivalent translation-invariant metrics
\[
d( (x_1, y_1), (x_2, y_2) ) = \bigl( |x_1 - x_2|^p + a |y_1 - y_2|^p \bigr) ^{1/p},
\]
where \( 1 \leq p < \pinfty \) and \( 0 < a < \pinfty \).
When \( p = 2 \), the iso\-metry group includes rotations, but otherwise it does not.
And when \( p = 2 \), the rotation group depends on the parameter \( a \).
However, in this example, each of the iso\-metry groups act by bi-Lipschitz transformations with respect to all the other metrics.

We prove that \( \Iso(M, d) \) is a metrisable, locally compact and \( \sigma \)-compact topological group that acts with compact stabilisers (Theorem \ref{thm:isom-group-props}),  and whose identity component acts transitively (Corollary~\ref{cor:H_0-transitive}).
In Theorem \ref{thm:general-isometry-group}, we also prove a more quantitative and precise statement about the
metrisability, namely that for every \( \epsilon \in \R^+ \), the group \( \Iso(M, d) \) may be metrised such that it is \( (1,\epsilon) \)-quasi-isometric to~\( (M,d) \).

\begin{proposition}\label{prop:3-topols}
Let \( (M, d) \) be a metric space, not necessarily connected or locally compact. Then the compact-open topology and the topologies of uniform convergence on compacta and of pointwise convergence agree on \( \Iso(M, d) \), and the group \(  \Iso(M, d) \), endowed with any of these topologies, is a topological group.
\end{proposition}

\begin{proof}
For the fact that these topologies agree on \( \Iso(M, d) \), see  \cite[Lemmas 5.B.1 and 5.B.2]{Cornulier-Harpe-Geometry}.
That this structure makes the isometry group a topological group is well known; van Dantzig and van der Waerden \cite{Dantzig-Waerden} show this in the case where \( M \) is connected, locally compact and separable, and a proof of the general case may be found in \cite[Lemma 5.B.3]{Cornulier-Harpe-Geometry}.
\end{proof}

We now equip \( \Iso(M,d) \) with any of the topologies above.

We are not assuming that our metric spaces are proper, but we still need some substitute for a proper metric, and this construction (and some other useful facts) will be the subject of the next two lemmas.
Much of this is ``folklore'', but we do not know a reference
and so we include proofs.
We first choose \( \ell \in \R^+ \) small enough that \( \barB(p, 2\ell) \) is compact for one and hence every \( p \) in \( M \) by homogeneity.
Then there exists a positive integer \( L \) for which the compact set
\( \barB(p, 2\ell) \) may be covered by \( L \) open balls of radius \( \ell \), for one and hence all \( p \) in \( M \) by homogeneity.

Given a point \( o \in M \), we define sets \( V_n(o, \ell) \) inductively:
first, \( V_0(o, \ell) \coloneqq  \{o\} \), then
\begin{equation}\label{eq:def-Vn0ell}
V_{n}(o, \ell) \coloneqq \bigcup_{p\in V_{n-1}(o, \ell)} \barB(p, \ell)
\end{equation}
when \( n \in \Z^+ \).
Further, we define \( U_o \coloneqq  \{ g \in \Iso(M,d) : g(o) \in \barB(o, \ell)\} \).

\begin{lemma}\label{lem:prelim}
Let \( G \) be the isometry group of a homogeneous metric space \( (M, d) \), and \( o \) be any point of \( M \).
Then
\begin{enumerate}
\item \( V_{n}(o, \ell) \) may be covered by at most \( L^{n} \) open balls \( B(p, \ell) \) for all \( n \in \Z^+ \);
\item \( M = \bigcup_{n \in \N} V_{n}(o, \ell) \), whence \( (M,d) \)
    is \( \sigma \)-compact and second countable;
\item a subset \( A \) of \( M \) is precompact if and only if \(A
    \subseteq V_{n}(o, \ell) \) for some  \( n \in \N \);
\item the \( n \)-fold product \( U_o^n \) is equal to \( \{ g \in G : g(o) \in V_{n}(o, \ell)\} \) for all \( n \in \N \);
\item \( U_o \) is compact in \( G \), whence \( U_o^n \) is compact in \( G \) and so \( V_{n}(o, \ell) \) is compact in \( M \) for all \( n \in \N \)
\end{enumerate}
\end{lemma}

\begin{proof}
First, if \( x \in \bigcup_{q \in \barB(p, \ell) } \barB(q, \ell) \), then
\[
d(x,p) \leq d(x,q) + d(q,p) \leq 2\ell.
\]
Hence \( \bigcup_{q \in \barB(p, \ell) } \barB(q, \ell) \) may be covered by \( L \) balls of radius \( \ell \), by our choice of \( L \).
Now (i) may be proved by induction.

From (i), we see that \( V_n(o,\ell) \) is precompact.
Now \( \bigcup _{n \in \N} V_{n}(o, \ell) \) is both open and closed in \( M \) and hence coincides with \( M \).
It follows that  \( M \) is \( \sigma \)-compact and hence second countable, which completes the proof of (ii).

To prove (iii), note that
\( \{ \bigcup_{p \in V_n(o,\ell) } B(p, \ell) : n \in \N\} \)
is an increasing open cover of \( M \), and hence if \( A \) is a precompact
subset of \( M \), then for some \( n \),
\[
A \subseteq \bar A \subseteq \bigcup_{p \in V_n(o,\ell) } B(p, \ell) \subseteq V_{n+1} (o, \ell).
\]
Conversely, if \( A \subseteq V_{n+1} (o, \ell) \) then \( A \) is precompact.

For (iv), we must show that
\begin{equation}\label{eq:U-V-reln}
U_o^n = \{g \in G : g(o)\in V_n(o, \ell)\} .
\end{equation}
If \( n=1 \), then \eqref{eq:U-V-reln} holds by definition.
Assume that \eqref{eq:U-V-reln} holds when \( n=k \).
On the one hand, if \( f\in U_o^{k+1} \), then \( f=gh \) where \( g\in U_o^k \) and \(  h\in U_o \), so
\[
f(o)\in g(\barB(o, \ell)) = \barB(g(o), \ell)\subseteq V_{k+1}(o, \ell).
\]
On the other hand, suppose that \( f(o)\in V_{k+1}(o, \ell) \).
There exists \( q \in V_{k}(o,\ell) \) such that \( f(o) \in \barB(q, \ell) \) by definition, and by transitivity and the inductive hypothesis, there exists \( g \in U_o^k \) such that \( q = g(o) \).
Now \( g^{-1}f(o)\in \barB(o, \ell) \), that is, \( g^{-1}f\in U_o \), since \( g^{-1} (\barB(g(o), \ell)) = \barB(o, \ell) \), and we may conclude that \( f\in U_o^{k+1} \).
By induction, \eqref{eq:U-V-reln} holds for all \( n \).

For (v), the Arzel\`a--Ascoli theorem shows that \( U_o \) is precompact in the compact-open topology.
Moreover, if \( (f_{n})_{n \in \N} \) is a sequence of elements of \( U_o \) that converges to \( f \in G \), then \( f_{n}(o) \) converges to \( f(o) \in M \) and \( d( f_{n}(o), o) \leq \ell \) for all \( n \), whence \( d( f(o), 0) \leq \ell \) and \( f \in U_o \).  Thus \( U_o \) is compact.

Since \( G \) is a topological group, \( U_o^n \) is compact for each \( n \in \N \), and so \( V_n(o,\ell) \) is compact from (iv) and the continuity of the map \( g \mapsto g(o) \) from \( G \) to \( M \).
\end{proof}

We now construct two proper metrics on \( M \); the first has the advantage that it is closely related to the sets \( V_n(o,\ell) \) and the second that it is admissible.
We define the \introd{Busemann gauge} \( \rho_{[\ell]} \) on \( M \) by
\begin{equation}\label{eq:Busemann-gauge}
\rho_{[\ell]}(o, p) \coloneqq  \ell \min\{ n \in \N : p \in V_{n}(o,\ell) \}
\end{equation}
and the \introd{derived semi-intrinsic metric} \( d_{[\ell]} \) by
\begin{equation}\label{eq:derived-metric}
d_{[\ell]}(p,q)
\coloneqq  \inf \Bigl\{ \sum_{j=1}^k d(x_j, x_{j-1} ) : x_0, \dots, x_k \in M, x_0 = p, x_k = q, d(x_j,x_{j-1}) \leq \ell  \Bigr\} .
\end{equation}
We note that \( \rho_{[\ell]} \) takes discrete values.
Observe that, in the case where the metric space is \( \R \) and the metric is given by \( d(x,y) = |x-y|^{\theta} \), where \( \theta \in (0,1) \) and \( \ell =1 \), the derived semi-intrinsic metric is given by \( d_{[\ell]}(x,y) = \lfloor |x-y| \rfloor + ( |x-y| - \lfloor |x-y| \rfloor )^\theta \), and is somewhat bizarre;  here \( \lfloor x \rfloor \) denotes the integer part of \( x \).

\begin{lemma}\label{lem:Busemann-gauge}
The Busemann gauge \( \rho_{[\ell]} \) and the derived semi-intrinsic metric \( d_{[\ell]} \) are both metrics on the set \( M \).
In addition,
\[
d(p,q) \leq d_{[\ell]} (p,q) \leq \rho_{[\ell]}(p,q) \leq 2 d_{[\ell]} (p,q) + \ell
\qquad\forall p, q \in M.
\]
Hence \( d_{[\ell]} \) is proper as \( \rho_{[\ell]} \) is proper.
Further, if \( d(x,y) \leq \ell \), then \( d_{[\ell]}(x,y) = d(x,y) \) for all \( x, y \in M \), whence \( d_{[\ell]} \) is admissible.
\end{lemma}

\begin{proof}
It is easy to see that both \( \rho_{[\ell]} \) and \( d_{[\ell]} \) are metrics.

Take \( p, q \in M \).
On the one hand, if \( q \in V_n(p,\ell) \), then by definition there are points \( x_j \in M \), where \( 0 \leq j \leq n \), such that \( x_0 = p \), \( x_n =x \) and \( x_j \in \barB(x_{j-1}, \ell) \).
It follows immediately that \( d_{[\ell]}(p,q) \leq n\ell \).

Moreover, for any positive \( \epsilon \), there are points \( x_0, \dots, x_k \) such that \( x_0 = p \), \( x_k = q \) and \( \sum_{j=1}^k d(x_{j}, x_{j-1}) \leq d_{[\ell]}(p,q) + \epsilon \).
Observe that we may omit points \( x_j \) if \( d(x_{j+1}, x_{j}) + d(x_{j}, x_{j-1}) \leq  \ell \), for in this case
\[
d(x_{j+1}, x_{j-1}) \leq d(x_{j+1}, x_{j}) + d(x_{j}, x_{j-1}) \leq \ell.
\]
We omit such points recursively until this is no longer possible.

At this point, we may not only assume that \( \sum_{j=1}^k d(x_{j}, x_{j-1}) \leq d_{[\ell]}(p,q) + \epsilon \), but also that \( d(x_{j+1}, x_{j}) + d(x_{j}, x_{j-1}) >  \ell \).
It follows that
\[
d_{[\ell]}(p,q) + \epsilon \geq \sum_{j=1}^k d(x_{j}, x_{j-1}) > \lfloor k/2\rfloor\ell,
\]
and this implies that
\[
\rho_{[\ell]}(p,q) \leq k \ell \leq \ell + 2 d_{[\ell]}(p,q).
\]
The rest of the proof is evident.
\end{proof}

It is easy to see that, if \( d \) is a geodesic metric, then \( d_{[\ell]} \) coincides with \( d \).
Moreover, if we start with arbitrary admissible metrics \( d_1 \) and \( d_2 \) with a common transitive isometry group, and construct the Busemann gauges \( \rho_{1,[\ell]} \) and \( \rho_{2,[\ell]} \) or the derived semi-intrinsic metrics \( d_{1,[\ell]} \) and \( d_{2,[\ell]} \) (still with the assumption that the balls \( B_{d_1}(p, 2\ell_1) \) and \( B_{d_2}(p, 2\ell_2) \) are relatively compact) then \( \rho_{1,[\ell]} \) and  \( \rho_{2,[\ell]} \) are quasi-isometric.
Hence by Lemma \ref{lem:Busemann-gauge} all the metrics  \( \rho_{1,[\ell]} \), \( \rho_{2,[\ell]} \), \( d_{1,[\ell]} \) and \( d_{2,[\ell]} \) are quasi-isometric.
It is also straightforward to see that the derived semi-intrinsic metrics \( d_{[\ell_1]} \) and \( d_{[\ell_2]} \) are quasi-isometric (again, provided that the balls \( B_{d_1}(p, 2\ell_1) \) and \( B_{d_2}(p, 2\ell_2) \) are relatively compact).

We now introduce an important class of metrics.

\begin{definition}
A metric on a homogeneous metric space \( (M,d) \) is called \emph{{proper quasigeodesic}} if the identity map is a quasi-isometry from \( (M,d) \) to \( (M, \rho_{[\ell]}) \), where \( \rho_{[\ell]} \) is the Busemann gauge defined in \eqref{eq:Busemann-gauge}.
\end{definition}

This definition is not standard, but coincides with the usual versions.
Two distinct {proper quasigeodesic} metrics on \( M \) are quasi-isometric.

\subsection{Metric spaces and coset spaces}\label{ssec:metric-coset-spaces}

We begin by clarifying notation.
A group \( H \) \emph{acts} on a set \( M \) if there is a homomorphism \( \alpha \) from \( H \) to \( \Trans(M) \), the group of all invertible transformations of \( M \).
If the action is \emph{effective}, that is, if \( \alpha(h) p = p \) for all \( p \in M \) only if \( h = e \), then \( H \) may be identified with a subgroup of \( \Trans(M) \).

\begin{remark}\label{rem:no-compact-normal-subgroups}
If a group \( H \) acts transitively on a set \( M \), then all the stabilisers of points in \( M \) are conjugate.
Hence a normal subgroup of \( H \) that is contained in one stabiliser is contained in all stabilisers, that is, it fixes all points.
Thus if \( H \) acts effectively and transitively on a set, then no nontrivial compact normal subgroups of \( H \) are contained in a stabiliser.
In general, if \( H \) acts transitively but not effectively, and \( K \) is the stabiliser of a point, then \( N \coloneqq  \bigcap_{h \in H} hKh^{-1} \) is a normal subgroup of \( H \) that may be factored out to obtain a effective action of \( H/N \), since \( H/K \) may be identified with \( (H/N)/(K/N) \).

We write \( Z(H) \) for the centre of a group \( H \); then what we have just shown implies in particular that if \( H \) acts effectively on a set, and \( K \) is the stabiliser of a point, then \( K \cap Z(H) =\{e\} \).
\end{remark}

An action \( \alpha \) of a group \( H \) on a metric space \( (M,d) \) is \emph{isometric} or \emph{by isometries} if  \( \alpha(H) \subseteq \Iso(M,d) \).

\begin{theorem}\label{thm:isom-group-props}
Let \( (M, d) \) be a homogeneous metric space, \( o \) be a point of \( M \), \( \rho_{[\ell]} \) be the Busemann gauge of \eqref{eq:Busemann-gauge}, and \( H \) be the isometry group of \( (M,d) \).
Then
\begin{enumerate}
\item \( H \) is locally compact, \( \sigma \)-compact and second countable;
\item the stabiliser \( K \) of \( o \) is compact;
\item \( H \) is metrisable, and for each \( \epsilon \in \R^+ \), the \introd{Busemann metric} \( d_H \) on \( H \), given by
\[
d_H(g, h) \coloneqq \sup\{d(g(q), h(q))\expe^{-{\rho_{[\ell]}(o, q)}/{\epsilon}} : q\in M\},
\]
is an admissible left-invariant metric on \( H \);
\item  the map \( \pi: g\mapsto g(o) \) from \( (H, d_H) \) to \( (M, d) \) is 1-Lipschitz and \( (1, 2 \epsilon / \expe ) \)-quasi-isometric; more precisely,
\begin{equation*}
d_H(g,h) - 2\epsilon / \expe \le d(g(o),h(o)) \le d_H(g,h)
\qquad\forall g, h \in H.
\end{equation*}
\item \( d_H \) is right-\( K \)-invariant, that is, \( d_H(gk, hk) = d_H(g,h) \) for all \( g, h \in H \) and all \( k \in K \), and \( \diam_H(K) \leq 2\epsilon/\mathrm{e} \).
\end{enumerate}
\end{theorem}

\begin{proof}
The local compactness of \( H \) was shown by van Dantzig and van der Waerden \cite{Dantzig-Waerden}.

By Lemma \ref{lem:prelim} (v), (ii) and (iv) and Proposition \ref{prop:3-topols}, the set \( U_o \) and hence the sets \( U_o^n \) are compact in \( H \) when \( n \in \N \), and \( H = \bigcup_{n \in \N} U_o^n \), whence \( H \) is \( \sigma \)-compact.
The second countability of \( H \) follows from that of \( M \).

Next, van Dantzig and van der Waerden proved (ii), which also follows from the fact that the stabiliser of \( o \) is a closed subset of the compact set \( U_o \) of Lemma \ref{lem:prelim}.

Clearly \( d_H \) is left-invariant; we need to show that it is admissible.
Let \( ( g_n )_{n \in \N } \) be a sequence in \( H \).
On the one hand, if \( g_n \to g \) in \( (H, d_H) \), then
\[
d(g_n(p), g(p))
\le \expe^{{\rho_{[\ell]}(o,p)}/{\epsilon}} d_H(g_n, g),
\]
for all \( p\in M \), and hence \( g_n \) converges to \( g \) pointwise, and so in \( H \).

On the other hand, if \( g_n \to g \) in \( H \), then the convergence is uniform on compacta, by Proposition~\ref{prop:3-topols}.
Fix \( \eta \in (0,1) \), and take \( R \in \R^+ \) such that \(  t\expe^{-{t}/{\epsilon}}<\eta \) whenever \( t>R \).
Define \( A \) to be the closure of \( \{p\in M: \rho_{[\ell]}(o, p)\le R\} \).
Then \( A \) contains \( o \) and is compact in \( M \) by definition and part (v) of Lemma \ref{lem:prelim}.
Hence there is \( n_0\in \N  \) such that \( d(g_n(p), g(p))\le \eta \) for all \( p\in A \) and all \( n \ge n_0 \).
Therefore
\[
d(g_n(p), g(p))\expe^{-{\rho_{[\ell]}(o,p)}/{\epsilon}} \le \eta ,
\]
if \( n \ge n_0 \) and \( p \in A \), while if \( n \ge n_0 \) and \( p \notin A \), then
\begin{align*}
&d(g_n(p), g(p))\expe^{-{\rho_{[\ell]}(o,p)}/{\epsilon}} \\
&\quad\le \bigl( d(g_n(p), g_n(o)) + d(g_n(o), g(o)) + d(g(o), g (p))\bigr)\expe^{-{\rho_{[\ell]}(o,p)}/{\epsilon}} \\
&\quad\le ( 2 d(o, p) + \eta ) \expe^{-{\rho_{[\ell]}(o,p)}/{\epsilon}} \\
&\quad\le 3 \eta .
\end{align*}
We conclude that \( d_H(g_n, g) \le 3\eta \) for all \( n\ge n_0 \).
As \( \eta \) may be chosen to be arbitrarily small, \( g_n \to g \) in \( (H, d_H) \).

By definition, \( d(\pi(g), \pi( h)) = d(g(o), h(o))\le d_H(g, h) \) for all \( g,h \in H \), so \( \pi \) is 1-Lipschitz.
Moreover, \( \pi \) is surjective by the homogeneity assumption, and
\begin{equation}\label{eq:quasi-isometric-projection}
\begin{aligned}
 d_H(g, h)
&\le \sup\{(d(g(p), g(o))+d(g(o), h(o))+d(h(o), h(p)))\expe^{-{\rho_{[\ell]}(o,p)}/{\epsilon}}: p\in M\} \\
&\le d(g(o), h(o)) \sup\{\expe^{-{\rho_{[\ell]}(o,p)}/{\epsilon}}: p\in M\}
+ 2\sup\{d(o, p)\expe^{-{\rho_{[\ell]}(o,p)}/{\epsilon}}: p\in M\}\\
&\le d(\pi(g), \pi(h)) + 2  \epsilon / \expe
\end{aligned}
\end{equation}
for all \( g, h\in H \), whence \( \pi \) is a \( (1, 2\epsilon / \expe) \)-quasi-isometry.

Finally, for \( g, h \in H \) and \( k \in K \),
\[
\begin{aligned}
d_H(gk, hk)
&= \sup\{d(gk(q), hk(q))\expe^{-{\rho_{[\ell]}(o, q)}/{\epsilon}} : q\in M\} \\
&= \sup\{d(g(k(q)), h(k(q)))\expe^{-{\rho_{[\ell]}(o, k(q))}/{\epsilon}} : q\in M\}
= d_H(g,h),
\end{aligned}
\]
as required.
Further, from \eqref{eq:quasi-isometric-projection},
\[
\diam(K)
= \sup\{ d_H(k,e) : k \in K \}
\leq 2\epsilon/\mathrm{e} + \sup\{ d(k(o),e(o)) : k \in K\}
= 2\epsilon/\mathrm{e} ,
\]
and the proof is complete.
\end{proof}

Observe that we could define the Busemann metric in the statement of the theorem using \( d_{[\ell]} \) rather than \( \rho_{[\ell]} \), and the proof above would work with minor modifications.
Observe also that \( \Iso(M,d) \subseteq \Iso(M,d_{[\ell]}) \), where \( d \) is a derived semi-intrinsic metric as defined just before Lemma \ref{lem:Busemann-gauge}.

We now consider closed subgroups of the isometry group in more detail.

\begin{theorem}\label{thm:general-isometry-group}
Let \( (M, d) \) be a homogeneous metric space, \( G \) be a closed subgroup of \( \Iso(M, d) \), and \( S \) be the stabiliser in \( G \) of a point \( o \) in \( M \).
Then
\begin{enumerate}
\item
\( G \) is locally compact and \( S \) is compact;
\item
if \( G \) acts transitively on \( M \), then the map \( gS\mapsto g(o) \) is a homeomorphism from \(  G/S \) to \( M \);
\item
if \( \barB(o, \ell)\subseteq G(o) \) for some choice of \( \ell\in\R^+ \) and \(  o\in M \), then \( G \) acts transitively on \( M \);
\item if \( G \) is open in \( \Iso(M,d) \), then it acts transitively on \( M \);
\item if \( G \) acts transitively on \( M \), then for each \( \epsilon \in \R^+ \) and \( o \in M \), we may equip \( G \) with an admissible left-invariant metric in such a way that the map \( g \mapsto g(o) \) is \( 1 \)-Lipschitz and a \( (1, \epsilon) \)-quasi-isometry;
\item if \( G \) acts transitively on \( M \), then for each \( n \in \N \) and \( o \in M \),
\[
\{ g \in G : g(o) \in \barB(o, \ell)\}^n = \{ g \in G : g(o) \in V_{n}(o, \ell)\} .
\]
\end{enumerate}
\end{theorem}

\begin{proof}
Part (i) is standard: closed subspaces of locally compact or compact spaces are locally compact or compact.

Part (ii) follows from \cite[Theorem 3.2, page~121]{Helgason-DGLGSS}.

For part (iii), the orbit \( G(o) \) is nonempty, open and closed.
As \( M \) is connected, by our standing assumption, \( M = G(o) \).

For part (iv), it follows from part (ii) that the map \( g\mapsto g(o) \) from \( G \) to \( M \) is open.
Consequently \( G(o) \) is open and \( G \) acts transitively by part (iii).

The proof of part (v) is similar to the proof of part (iii) in Theorem~\ref{thm:isom-group-props}, and the proof of part (vi) is similar to the proof of part (iv) in Lemma~\ref{lem:prelim}.
\end{proof}

\begin{corollary}\label{cor:H_0-transitive}
Let \( (M, d) \) be a homogeneous metric space.
The connected component \( H \) of the identity in \( \Iso(M, d) \) is locally compact and acts transitively on \( M \), and the quotient \( \Iso(M,d)/H \) is compact.
\end{corollary}

\begin{proof}
The subgroup \( H \) is closed in \( \Iso(M,d) \), and hence is locally compact.
It is also normal, and the totally disconnected locally compact group \( \Iso(M, d)/H \) has a neighbourhood base \( \Nu \) of the identity consisting of open and closed subgroups, ordered by reverse inclusion; see \cite[Proposition 4.13]{Stroppel}.
For each \( \nu\in \Nu \), let \( H_\nu \) be the preimage of \( \nu \) in \(  \Iso(M, d) \).
Then \( ( H_\nu )_{\nu\in \Nu} \) is a net of open and closed subgroups of \(  \Iso(M, d) \) such that \( H =\bigcap_{\nu\in \Nu} H_\nu \), and \( H_\nu \) acts transitively on \( M \) for every \( \nu\in \Nu \) by Theorem~\ref{thm:general-isometry-group}.

Take \( o, p\in M \).
For each \( \nu\in \Nu \), there is \( g_\nu\in H_\nu \) such that \(  g_\nu(o)=p \).
By the Arzel\`a--Ascoli theorem, \( \{ g \in \Iso(M,d) : g(o) = p\} \) is compact; since each \( g_\nu \) lies in this set, we may assume that \( g_{\nu} \) converges to \( g\in \Iso(M,d) \) by passing to a subnet if necessary.
For each \( \nu \in \Nu \),  \( g_{\nu'} \in H_\nu \) when \( \nu' \ge \nu \), and hence \( g \in H_\nu \).
In conclusion, \( g \in\bigcap_{\nu\in \Nu}H_\nu = H \) and \( g(o) = p \).

Let \( K \) be the stabiliser in \( \Iso(M,d) \) of the point \( o \) in \( M \); then \( K \) is compact. Since \( H \) acts transitively, for every \( g \in \Iso(M,d) \), there exists \( h \in H \) such that \( h^{-1}g(o) = o \), that is, \( h^{-1}g \in K \).  It follows that \( \Iso(M,d) \subseteq HK \).
\end{proof}

The next definition summarises and extends the structure that we have seen in the last theorems.

\begin{definition}\label{def:metric-projection}
A \introd{homogeneous metric projection} is a pair of homogeneous metric spaces \( (M_1,d_1) \) and \( (M_2,d_2) \), with a group \( H \) acting isometrically, continuously and transitively on both \( M_1 \) and \( M_2 \), and an \( H \)-equivariant projection \( \pi: M_1 \to M_2 \) such that
\[
d_2(x_2, y_2) = \inf\{ d_1(x_1,y_1) : \pi x_1 = x_2, \pi y_1 = y_2 \}
\qquad \forall x_2,y_2  \in M_2.
\]
The set \( \{ x_1 \in M_1 : \pi x_1 = x_2\} \) is called the \emph{fibre above \( x_2 \)} in \( M_2 \).
\end{definition}

Because \( H \) acts continuously on both \( M_1 \) and \( M_2 \), the stabilisers \( K_1 \) of a point \( x \) in \( M_1 \) and \( K_2 \) of \( \pi x \) in \( M_2 \) are closed, and it is clear that \( K_1 \subseteq K_2 \).
There is then a natural identification of the fibre above \( x \) with the quotient space \( K_2/K_1 \), and all the fibres are isometric to each other because \( H \) acts transitively.
As noted in the remark above, the subgroup of \( H \) of elements that act trivially on \( M_1 \) (and \emph{a fortiori} on \( M_2 \)) is a closed normal subgroup that may be factored out.

With \( K_1 \) and \( K_2 \) as above, if the set \( K_2/K_1 \) is compact, then the diameter of each fibre is bounded; hence there exists a constant \( C \) such that
\[
d_1(x,y) - C  \le d_2(\pi x, \pi y) \le d_1(x,y)
\qquad \forall x,y  \in M_1,
\]
that is, \( \pi \) is \( 1 \)-Lipschitz and a rough isometry.
The constant \( C \) is called the implicit constant of the projection \( \pi \) and may be identified with the diameter of \( K_2/K_1 \).

Let \( \pi \) be the projection from a locally compact group \( H \) onto a quotient space \( H/K \).
We recall that a \emph{section} \( \sigma \) for \( H/K \) in \( H \) is a mapping such that \( \pi \circ \sigma \) is the identity map on \( H/K \).
It is well-known that sections exist: they may be taken to be Borel or even Baire (see, for instance, \cite {Kehlet}).
It is evident that if \( \pi \) is a homogeneous metric projection from \( (M_1, d_1) \) onto \( (M_2, d_2) \) and \( H \) is a common transitive isometry group, then \( M_2 \) may be identified with \( H/K_2 \), where \( K_2 \) is a compact subgroup of \( H \), and a section from \( M_2 \) to \( H \) composed with the projection from \( H \) to \( M_1 \) is a section from \( M_2 \) to \( M_1 \).
If
\begin{equation*}
d_1(x,y) - C  \le d_2(\pi x, \pi y) \le d_1(x,y)
\qquad\forall x, y \in M_1,
\end{equation*}
and if \( \sigma \) is a section for \( M_2 \) in \( M_1 \), then
\[
d_2(p, q)  \leq d_1(\sigma (p), \sigma(q)) \leq d_2(p, q) +  C
\qquad\forall p, q \in M_2.
\]

We conclude this section with two remarks.

\begin{remark}\label{rem:fourth-topology}
Let \( (M,d) \) be a homogeneous metric space, and let \( H \) be a subgroup of \( \Iso(M,d) \) that acts transitively on \( M \).
Equip \( \Iso(M,d) \) with the topology of Proposition \ref{prop:3-topols}, \( H \) with the relative topology, and \( M \) with the topology induced by \( d \).
Take an arbitrary point \( o \) of \( M \).

Then the relative topology on \( H \) is also the only topology on \( H \) such that the mapping \( \pi: h \mapsto ho \) is continuous and open.
Indeed, the sets \( \{ g \in H: d(hx,x) < \epsilon \} \), where \( x \in M \) and \( \epsilon \in \R^+ \) form a subbase for any topology on \( H \) such that \( \pi \) is continuous and open, and also for the topology of pointwise convergence.

This implies that if \( U \subset H \) and \( U = UK \), then \( U \) is open in \( H \) if and only if \( Uo \) is open in \( M \).
It follows that if we change the metric on \( M \) to a new metric that induces a different topology and is such that \( H \) is still an isometry group, then the topology of \( H \) as an isometry group with the new metric must also change.
\end{remark}

\begin{remark}\label{rem:metric-groups-are-closed}
Let \( (G,d) \) be a metric group, that is, \( G \) is a connected locally compact group, with an admissible metric \( d \).
The group \( G \), acting on itself by left translations, may be viewed as a subgroup of \( \Iso(G,d) \); this subgroup is closed.
Indeed, take \( g_n \in G \) such that \( g_n \to h \) in \( \Iso(G,d) \); we need to show that \( h \in G \).
Let \( g = he \) in \( G \).
Now \( g_n = g_ne \to he = g \) in \( G \).
Consequently, \( g_ng' \to gg' \) for all \( g' \in G \); since the topology of \( \Iso(G,d) \) is that of pointwise convergence, \( g_n \to g \) in \( \Iso(G,d) \).
\end{remark}

\subsection{Modifying metrics}\label{ssec:new-metric}

In dealing with homogeneous metric spaces, a useful technique is the use of pseudometrics on groups; we show how to use these to modify metrics.

A \introd{pseudometric} is a function that satisfies all the conditions required of a metric, except perhaps the condition that \( d(x,y) = 0 \) implies that \( x = y \).
Let \( \dot d \) be a left-invariant pseudometric on a topological group \( G \).
We define the \emph{kernel} of \( \dot d \) to be the set \( \{x \in G : d(x,e) = 0\} \), and say that \( \dot d \) on \( G \) is \emph{continuous} if \( \dot d(x_n, y) \to \dot d(x,y) \) for all \( y \in G \) whenever \( x_n \to x \) in \( G \), \emph{semiproper} if \( \{ x \in G : \dot d(x,e) = 0\} \) is compact, and \emph{proper} if \( \{ x \in G : \dot d(x,e) < C\} \) is relatively compact for all \( C \in \R^+ \).

Given a pseudometric space \( (M, \dot d) \), we define the \emph{ball} \( B_{\dot d}(x, r) \) to be the set \( \{ y \in M : \dot d(x,y) < r \} \); then \( B_{\dot d}(x, r) \) is open if \( \dot d \) is continuous.
Further, given pseudometric spaces \( (M_1, \dot d_1) \) and \( (M_2, \dot d_2) \), we say that a bijection \( f: M_1 \to M_2 \) is an \emph{isometry} if \( \dot d_2(f x_1, f y_1) = \dot d_1(x_1, y_1) \) for all \( x_1, y_1 \in M_1 \).

\begin{lemma}\label{lem:pseudometrics}
Suppose that \( (M,d) \) is a homogeneous metric space, that \( G \) is a transitive closed subgroup of \( \Iso(M,d) \), and that \( K \) is the stabiliser in \( G \) of a point \( o \) in \( M \).
Then \( \dot d: G \times G \to [0, \pinfty) \), defined by
\[
\dot d(x,y) \coloneqq  d(xo, yo)
\qquad\forall x, y \in G,
\]
is a continuous left-invariant pseudometric on \( G \), and
\begin{enumerate}
  \item \( \bigcap_{x \in G} xKx^{-1} = \{e\} \);
  \item \( \dot d(x,e) = 0 \) if and only if \( x \in K \);
  \item \( \dot d(x,y) = \dot d(xk, yk') \) for all \( x, y \in G \) and \( k,k' \in K \);
  \item the topology induced by \( d \) on \( G/K \) coincides with the quotient topology on \( G/K \).
\end{enumerate}

Conversely, if \( \dot d \) is a continuous left-invariant pseudometric on a connected metrisable topological group \( G \), then \( K \coloneqq \{x \in G: \dot d(x,e) = 0\} \) is a closed subgroup of \( G \), {and \( \{x \in G: \dot d(x,y) = 0\} = yK \);} further,  (iii) holds.
Define the function \( d: G/K \times G/K \to [0, \pinfty) \) by
\begin{equation}\label{eq:pseudometric-to-metric}
d(xK, yK) \coloneqq  \dot d(x,y)
\qquad\forall x, y \in G;
\end{equation}
then \( d \) is a metric on the set \( G/K \), and \( G \) acts continuously and transitively by isometries on \( (G/K,d) \).
Further, the subgroup \( N \coloneqq  \bigcap_{x \in G} xKx^{-1} \) is closed and normal  in \( G \), and acts trivially on \( G/K \), so that \( G/N \) may be identified with a transitive subgroup of \( \Iso(G/K, d) \).
Finally, suppose that the topology induced by \( d \) on \( G/K \) coincides with the quotient topology on \( G/K \).
Then
\begin{enumerate}\setcounter{enumi}{4}
  \item the Busemann metric \( d_\epsilon \) on \( G/N \), given by
\[
d_\epsilon(g, h)
\coloneqq \sup\{d(g(q), h(q))\expe^{-{\rho_{[\ell]}(o, q)}/{\epsilon}} : q\in G/N\},
\]
is admissible on \( G/N \); and
\item
the subgroup \( K/N \) of \( G/N \) is compact.
\end{enumerate}
\end{lemma}

\begin{proof}
Take \( x,y,z \in G \).
Then \( \dot d(x,y) \geq 0 \) and \( \dot d(x,y) = \dot d(y,x) \) by definition; further,
\[
\dot d(x,z) = d(xo,zo) \leq d(xo,yo) + d(yo,zo) = \dot d(x,y) + \dot d(y,z),
\]
and
\[
\dot d(x,y) = d(xo,yo) = d(zxo, zyo) =\dot d(zx, zy).
\]
Hence \( \dot d \) is a left-invariant pseudometric on \( G \).

The compactness of \( K \) and items (i) and (iv) are proved in Section \ref{ssec:metric-coset-spaces}; items (ii) and (iii) follow immediately from the definitions.

Conversely, if \( \dot d \) is a continuous left-invariant pseudometric on a topological group \( G \), and \( K = \{x \in G: \dot d(x,e) = 0\} \), then
\[
\dot d(x^{-1}y,e) = \dot d(y, x) \leq \dot d(y, e) + \dot d(e, x) =0,
\]
for all \( x, y \in K \) whence \( K \) is a subgroup of \( G \), which is closed since \( \dot d \) is continuous.
{
Observe that
\[
\dot{d}(x,y) = 0 \iff \dot{d}(y^{-1}x,e) = 0 \iff y^{-1}x \in K \iff x \in yK.
\]}
Moreover,
\[
\dot d(xk, yk')
\leq \dot d(xk, x) + \dot d(x, y) + \dot d(y, yk')
= \dot d(x,y)
\]
and
\[
\dot d(x, y)
\leq \dot d(x, xk) + \dot d(xk, yk') + \dot d(yk', y)
= \dot d(xk, yk'),
\]
so (iii) holds.
It follows immediately that \( \dot d \) induces a metric \( d \) on \( G/K \), by the formula
\[
d(xK, yK) = \dot{d}(x,y)
\qquad\forall x,y \in G,
\]
and \( G \) acts transitively and continuously by isometries on \( (G/K,d) \).
It is evident that \( N \) is closed and normal, and is precisely the subgroup of \( G \) that stabilises every point of \( G/K \), hence \( G/N \) acts effectively, transitively and isometrically on \( G/K \), which we may identify with \( (G/N)/(K/N) \) by a standard isomorphism theorem.

Now we suppose that the topology induced by \( d \) on \( G/K \) coincides with the quotient topology on \( G/K \), that is, that \( d \) is admissible, and prove (v) and (vi).
We may and shall suppose that \( N \) is trivial, otherwise we just divide it out.
By Remark \ref{rem:fourth-topology}, the topology on \( G \) coincides with the relative topology as a subgroup of \( \Iso(G/K,d) \), and Theorem \ref{thm:general-isometry-group} implies (v) and (vi).
\end{proof}

The reader may wish to check that, in the first part of the preceding lemma, if \( d \) is proper on \( G/K \), then \( \dot d \) is proper on \( G \),
while in the second part, \( \dot d \) is semiproper if and only if \( d \) is proper.

\begin{definition}\label{def:admissible-pseudometric}
A left-invariant continuous pseudometric \( \dot{d} \) on a topological group \( G \) with kernel \( K \) is said to be \emph{admissible} if the topology of the induced metric on \( G/K \) coincides with the quotient topology on \( G/K \).
Equivalently, the sets \( B_{\dot{d}}(x,r)K \), where \( x \in G \) and \( r \in \R^+ \) form a base for the topology of \( G/K \), or the sets \( B_{\dot{d}}(x,r) \), where \( x \in G \) and \( r \in \R^+ \) form a base for the subtopology of \( G \) of all right-\( K \)-invariant sets of the topology.
\end{definition}

By the proof of the previous lemma and the continuity of \( \dot{d} \), the sets \( B_{\dot{d}}(x,r) \) satisfy \( B_{\dot{d}}(x,r) = B_{\dot{d}}(x,r)K \)  and are open in \( G \).
Hence the key to showing admissibility is to show that if \( U \) is an open neighbourhood of \( x \) in \( G \) and \( U = UK \), then \( B_{\dot{d}}(x,r) \subseteq U \) when \( r \) is small enough.

\begin{corollary}\label{cor:convergence-on-compacta}
If \( \dot{d} \) is a left-invariant continuous admissible pseudometric on \( G \), and \( x_n \to x \) in \( G \) as \( n \to \pinfty \), then
\( \sup_{y \in K_c} \dot{d}(x_ny,xy) \to 0 \) for all compact subsets \( K_c \) of \( G \).
\end{corollary}

\begin{proof}
Let \( K \) be the kernel of \( \dot{d} \), and \( d \) be the corresponding metric on \( G/K \).
Convergence of a sequence in \( G \) implies pointwise convergence and hence locally uniform convergence of the corresponding sequence of elements of \( \Iso(G/K,d) \), by Proposition \ref{prop:3-topols}.
\end{proof}

We show now that if \( G \) is a locally compact group and \( d_G \) is an admissible left-invariant metric on \( G \) that is also right-\( K \)-invariant, where \( K \) is a closed bounded subgroup of \( G \), then the quotient space \( G/K \) may be equipped with a metric in a natural way.

\begin{lemma}\label{lem:quotient-metric}
Let \( K_0 \) and \( K \) be compact subgroups of a locally compact group \( G \) such that \( K_0 \subseteq K \).
Suppose that \( \dot{d} \) is a left-invariant right-\( K \)-invariant continuous admissible pseudometric on \( G \) with kernel \( K_0 \), and take \( C \coloneqq  \sup\{ \dot{d}(x,y) : x,y \in K \} \) (which is finite).
Then
\[
\ddot{d} (x, y) \coloneqq  \min \{ \dot{d} (xk, yk') : k, k' \in K \}
\qquad\forall x, y  \in G
\]
defines a left-invariant continuous admissible pseudometric on \( G \) with kernel \( K \), and
\[
\dot{d}(x,y) - C \leq \ddot{d}(x, y) \leq \dot{d}(x,y)
\qquad\forall x,y  \in G.
\]
\end{lemma}

\begin{proof}
Since \( \dot{d} \) is continuous and right-\( K \)-invariant and \( K \) is compact, we may write
\begin{equation}\label{eq:equivalent ddot d}
\ddot{d} (x, y)
= \min \{ \dot{d}(xk, y) : k \in K \}
= \min \{ \dot{d}(x, yk') : k' \in K \}.
\end{equation}

Clearly \( \ddot{d} \) is left-invariant and \( \ddot{d}(x, y) \geq 0 \) and \( \ddot{d}(x, y) = \ddot{d}(y, x) \) for all \( x, y \in G \).
Further,
\[
\dot{d}(xk, zk') \leq \dot{d}(xk, y) + \dot{d}(y, zk') ,
\]
and taking minima shows that \( \ddot{d}(x, z) \leq \ddot{d}(x, y) + \ddot{d}(y, z) \) for all \( x, y, z \in G \).
Suppose that \( \ddot{d} (x, y) = 0 \); then there exists \( k \in K \) such that \( \dot{d}(x, y k) = 0 \).
Hence \( x \in yK_0 \) and \( xK = yK \).

We now show that the pseudometric \( \ddot{d} \) is admissible.
By the remark following Definition \ref{def:admissible-pseudometric}, it suffices to consider \( x \in G \) and an open neighbourhood \( U \) of \( x \) in \( G \) such that \( U = UK \), and show that some \( B_{\ddot{d}}(x,r) \subseteq U \).
Clearly \( U = UK_0 \), and since \( \dot{d} \) is admissible, there exists \( r \in \R^+ \) such that \( x \in B_{\dot{d}}(x,r) \subseteq U \).
From \eqref{eq:equivalent ddot d},
\[
B_{\ddot{d}}(x,r) = \bigcup_{k \in K} B_{\dot{d}}(xk,r) = B_{\dot{d}}(xk,r) K \subseteq UK = U,
\]
so \( \ddot{d} \) is admissible.
\end{proof}

\begin{corollary}\label{cor:quotient-metric}
Let \( K_0 \) and \( K \) be compact subgroups of a locally compact group \( G \) such that \( K_0 \subseteq K \).
If \( d_{0} \) is a \( G \)-invariant admissible metric on \( G/K_0 \) such that
\[
d_{0}(xkK_0, ykK_0) = d_{0}(xK_0, yK_0)
\qquad\forall x, y  \in G \quad\forall k \in K,
\]
then \( d \), defined by
\[
d_{}(xK, yK)
= \min\{ d_{0}(xkK_0,yk'K_0) : k,k' \in K \}
\qquad\forall x,y  \in G,
\]
is a \( G \)-invariant admissible metric on \( G/K \), and the projection \( \pi: G/K_0 \to G/K \) is a \( G \)-equivariant rough isometry; more precisely,
\[
d_{0}(xK_0,yK_0) - C
\leq d_{}(xK, yK)
\leq  d_{0}(xK_0,yK_0)
\]
for all \( x,y  \in G \).
\end{corollary}

\begin{proof}
This follows from the preceding lemma, translated into the language of metrics using Lemma \ref{lem:pseudometrics}.
Indeed, the metric \( d_{0} \) induces a pseudometric \( \dot{d} \) on \( G \) which satisfies the conditions required in the previous lemma; the previous lemma constructs another pseudometric \( \ddot{d} \) on \( G \); finally \( d_{} \) is the metric on \( G/K \) induced by \( \ddot{d} \).
\end{proof}

A locally compact topological group \( G \) is said to be \introd{metrisable} if there is a metric \( d_G \) on \( G \) that induces the topology of \( G \); it is known that \( d_G \) may be taken to be left-invariant (see \cite[Theorem 8.3]{Hewitt-Ross}), and we shall always do so.
Conversely, it is easy to check that if \( d_G \) is a left-invariant metric on \( G \), then \( G \) with the topology induced by \( d_G \) is a topological group (that is, multiplication and inversion are continuous) if and only if \( d_G \) satisfies the condition \( d_G(x_n, x) \to 0 \) as \( n \to \pinfty \) implies that
\( d_G(x_n z,xz) \to 0 \) as \( n \to \pinfty \) for all \( z \in G \).

Lemma \ref{lem:quotient-metric} suggests the question whether, given a pseudometric group \( (G,d) \) and a closed \( d \)-bounded subgroup \( K \) of \( G \), it is possible to adjust \( d \) on \( G \) to obtain a pseudometric that is both left-invariant and right-\( K \)-invariant.
This is the point of the next lemma.
We say that a closed subgroup \( K \) of \( G \) is compact modulo a closed central subgroup \( Z \) of \( G \) provided that \( K/ (K \cap Z) \) is compact.

\begin{lemma}\label{lem:G-Z-K-invariant-metric}
Let \( Z \) be a closed central subgroup of a locally compact group \( G \), and let \( \dot{d} \) be a left-invariant continuous admissible pseudometric on \( G \).
Suppose that \( K \) is a subgroup of \( G \) that is compact modulo \( Z \), and set
\begin{equation}\label{eq:def-d-sub-K}
C \coloneqq  \sup_{k \in K} \inf_{z \in Z} \dot{d}(kz,e) .
\end{equation}
Then \( C \) is finite.
Further, \( \dot{d}_K \), defined by
\[
\dot{d}_K(g,h) \coloneqq  \sup_{k \in K} \dot{d}(gk, hk)
\qquad\forall g,h \in G,
\]
is a left-invariant, right-\( K \)-invariant, continuous, admissible pseudometric on \( G \), and
\begin{equation}\label{eq:quasi-pseudo-isometry}
\dot{d}(g,h) \leq \dot{d}_K(g,h) \leq \dot{d}(g,h) + 2 C
\qquad\forall g, h \in G.
\end{equation}
\end{lemma}

\begin{proof}
In light of the existence of suitable sections for quotients of locally compact groups (see, for instance, \cite{Kehlet}), there is a compact subset \( K_c \) of \( K \) such that \( K \subseteq K_c Z \).
Then
\[
     \sup_{k\in K} \dot{d}(gk, hk)
\leq \sup_{k\in K_c}\sup_{z\in Z} \dot{d}(gkz, hkz)
=    \sup_{k\in K_c} \dot{d}(gk, hk)
\leq \sup_{k\in K} \dot{d}(gk, hk),
\]
and so
\begin{equation}\label{eq:dK-redefined}
\dot{d}_K(g,h) = \sup_{k\in K_c} \dot{d}(gk, hk)
\qquad\forall g, h \in G.
\end{equation}
Similarly,
\[
C
= \sup_{k \in K} \inf_{z \in Z} \dot{d}(kz,e)
= \sup_{k \in K_c} \inf_{z \in Z} \dot{d}(kz,e)
\leq \sup_{k \in K_c} \dot{d}(k,e)
< \pinfty.
\]

By definition, given \( k \in K \) and \( z \in Z \),
\[
\dot{d}(gk, hk)
= \dot{d}(gkz, hkz)
\leq \dot{d}(gkz, g) + \dot{d}(g,h) + \dot{d}(h, hkz)
\leq \dot{d}(g,h) + 2\dot{d}(kz,e)
\]
for all \( g, h \in G \); we obtain \eqref{eq:quasi-pseudo-isometry} for \( \dot{d}_K \) by optimising in \( z \).
In particular, we see that \( \dot{d}_K \) is finite.
We may easily check that \( \dot{d}_K \) is a pseudometric on \( G \).
It remains to show that \( \dot{d}_K \) is admissible and continuous.

The continuity of \( \dot{d}_K \) follows immediately from \eqref{eq:dK-redefined} and Corollary \ref{cor:convergence-on-compacta}.

To check admissibility, we suppose that \( x \in G \) and \( V \) is an open neighbourhood of \( x \) in \( G \), and take \( U = VK \).
We need to show that \( B_{\dot{d}_K}(x,r) \subseteq U \) when \( r \) is small enough.
But \( B_{\dot{d}_K}(x,r) \subseteq B_{\dot{d}}(x,r) \) and the admissibility of \( \dot{d} \) implies that \( B_{\dot{d}}(x,r) \subseteq U \) when \( r \) is small enough.
\end{proof}

The next result follows immediately from Lemmas \ref{lem:pseudometrics} and \ref{lem:G-Z-K-invariant-metric}.

\begin{corollary}\label{cor:left-G-right-K-invariant metric}
Suppose that \( K_o \) is a compact subgroup of a locally compact group \( G \), and \( K \) is a subgroup of \( G \) that contains \( K_o \) and is compact modulo the centre of \( G \).
If \( d \) is a \( G \)-invariant metric on \( G/K_o \), then there is a metric \( d' \) on \( G/K_o \) such that the identity mapping on \( G/K_o \) is a rough isometry from \( (G/K_o,d) \) to \( (G/K_o,d') \) and \( d' \) is left-invariant and right-\( K \)-invariant, in the sense that
\[
d'(gg'kK_o, gg''kK_o) = d'(g'K_o, g''K_o)
\qquad\forall g,g',g'' \in G \quad\forall k \in K.
\]
\end{corollary}

We have seen that, starting from a homogeneous metric space \( (M,d) \), we may construct various transitive isometry groups \( H \), which are metrisable locally compact groups, and realise \( M \) as \( H/K \), where \( K \) is the stabiliser of a point \( o \) in \( M \).
Conversely, given a quotient space \( H/K \) of a metrisable locally compact group, it is natural to ask whether \( H/K \) may be given the structure of a metric space on which \( H \) acts isometrically.
The following corollary answers this question.

\begin{corollary}\label{cor:H/K-is-metrisable}
Given a compact subgroup \( K \) of a connected metrisable locally compact group \( H \) such that \( \bigcap_{h \in H} hKh^{-1} = \{e\} \), there exists an admissible metric \( d \) on \( H/K \) such that \( H \) may be identified with a closed subgroup of \( \Iso(H/K,d) \).
\end{corollary}

\begin{proof}
First, if \( H \) is metrisable, then, as noted above, there is a left-invariant admissible metric \( d_1 \) on \( H \).
We modify \( d_1 \) if necessary so that it is right-\( K \)-invariant, by defining \( d_2 \) by
\[
d_2(x,y) \coloneqq  \max \{ d_1(xk, yk) : k \in K \}
\qquad\forall x,y \in H.
\]
Lemma \ref{lem:G-Z-K-invariant-metric} shows that \( d_2 \) is a metric.
By Lemma \ref{lem:quotient-metric}, \( d \), defined by
\begin{equation}\label{eq:def-metric}
d(xK,yK) \coloneqq  \inf\{ d_2(xk, yk') : k, k' \in K\}
\qquad\forall xK, yK \in H/K,
\end{equation}
is an admissible metric on \( H/K \), and \( H \) acts isometrically on \( (H/K,d) \).
The condition on the conjugates of \( K \) ensures that the action is effective.

Take a sequence \( (h_n)_{n\in\N} \) in \( H \) and \( j \in \Iso(H/K,d) \) such that \( h_n h K \to j h K \) for all \( h \in H \); to see that \( H \) is closed in \( \Iso(H/K,d) \), we must show that \( j \in H \).

First, take \( g \in H \) such that \( jeK = gK \); then \( g^{-1}h_n h K \to g^{-1}j h K \) for all \( h \in H \), and \( g^{-1}jeK = eK \).
Next, since \( g^{-1}h_n eK \to eK \), \eqref{eq:def-metric} implies that we may choose \( g_n \in H \) such that \( g^{-1} h_n eK = g_n eK \) and \( g_n \to e \) in \( H \).
Write \( h_n' \) for \( g_n^{-1} g^{-1} h_n \) and \( j' \) for \( g^{-1} j \).
Thus, \( h'_n hK \to j' hK \) for all \( h \in H \) and \( h'_n eK = eK \) for all \( n \in \N \), that is, \( h'_n \in K \).
Since \( K \) is compact, we may, after passing to a subsequence if necessary, suppose that \( h'_n \to k \) in \( K \).
It then follows that \( khK = \lim_n h'_n hK = j'hK \) for all \( h \in H \), and since \( \Iso(H/K,d) \) acts effectively, \( j' = k \).
Thus \( j \in H \), and \( H \) is closed in \( \Iso(H/K,d) \).
\end{proof}

Now we discuss covering maps of homogeneous metric spaces.
If \( M^\sharp \) and \( M \) are connected topological spaces, then a continuous surjection \( \pi: M^{\sharp} \to M \) is said to be a \emph{covering map} provided that, for all sufficiently small neighbourhoods \( U \) in \( M \), there are disjoint neighbourhoods \( V_z \) in \( M^{\sharp} \), where \( z \in Z \), such that \( \pi^{-1}(U) = \bigsqcup_{z \in Z} V_z \) and the restriction of \( \pi \) to \( V_z \) is a homeomorphism onto \( U \).

In the case of connected topological groups, which we write \( H^\sharp \) and \( H \), we take \( \pi \) to be a homomorphism, with kernel \( Z \).
In this case, \( Z \) is discrete and normal in \( H^{\sharp} \),
which implies that \( Z \) is central, since \( \{ x \in G : xzx^{-1} = z\} \) is both open and closed in \( G \) for each \( z \in \Ker \pi \).
For such \( \pi \), for all sufficiently small neighbourhoods \( U \) in \( H \), there is a neighbourhood \( V \) in \( H^{\sharp} \) such that the restriction of \( \pi \) to \( V \) is a homeomorphism onto \( U \) and  \( \pi^{-1}(U) = \bigsqcup_{z \in \Ker \pi} zV \).

When we deal with homogeneous metric spaces, universal covering spaces need not exist; consider, for example, an infinite product of circles.

\begin{lemma}\label{lem:covering-space}
Suppose that \( \pi: G^{\sharp} \to G \) is a covering map of connected locally compact topological groups, \( K^\sharp \) and \( K \) are closed subgroups of \( G^{\sharp} \) and \( G \), and \( K^\sharp \) is an open subgroup of \( \pi^{-1}K \).
Then the canonical projection \( \pi^\sharp : G^\sharp / K^\sharp \to G / K \)  is a covering map.
Suppose that \( d \) is a \( G \)-invariant metric on \( G/K \).
Then for all \( \epsilon \in \R^+ \), there exists a \( G^\sharp \)-invariant metric \( d^{\sharp} \) on \( G^{\sharp} / K^\sharp \) such that
\[
d^{\sharp}(x,y) - \epsilon
\leq d(\pi x,\pi y)
\leq d^{\sharp}(x,y)
\qquad\forall x, y \in G^{\sharp} / K^\sharp.
\]

If \( K_1 \) is a connected subgroup of \( G \) that contains \( K \) and \( d \) is right-\( K_1 \)-invariant, then \( d^\sharp \) may be taken to be right-\( \pi^{-1} K_1 \)-invariant.
\end{lemma}

\begin{proof}
The mapping \( \pi^\sharp \) is the composition of two mappings: the canonical projection from \( G^\sharp / K^\sharp \) to \( G^\sharp / \pi^{-1} K \) and the canonical isomorphism of \( G^\sharp / \pi^{-1} K \) with \( G/K \), which is a homeomorphism.
It is obvious that we can use the latter map to transfer the metric from \( G/K \) to \( G^\sharp / \pi^{-1} K \) so that the homeomorphic isomorphism is also an isometry, so it suffices to deal with the canonical projection.
To simplify the notation, we replace \( G^\sharp \), \( K^\sharp \), \( \pi^\sharp \) and \( \pi^{-1} K \) by \( G \), \( K \), \( \pi \)  and \( K^\flat \).
Thus \( K \) is an open subgroup of \( K^\flat \), which is a closed subgroup of \( G \), and we consider the projection \( \pi: G/ K \to G/K^\flat  \); we need to prove that \( \pi \) is a covering map and show how to lift a metric on \( G/K^\flat \) to \( G/K \).

From the hypotheses, we may find points \( z_j \in K^\flat \) such that \( K^\flat = \bigsqcup_j z_j K \).
Moreover, there is an open set \( U \) in \( G \) such that \( U = U^{-1} \) and \( U^2 \cap K^\flat = K \).
Then the sets \( U z_j K \) are open in \( G \) and disjoint, and the mapping \( u z_j K \mapsto u K^\flat \) is a homeomorphism from \( U z_j K \) to \( UK^\flat \), and then by the \( G \)-equivariance of \( \pi \), the restriction of \( \pi \) to a set \( gU z_j K \), where \( g \in G \), is a homeomorphism to \( gUK^\flat \).
It follows that \( \pi \) is a covering map.

Next, a metric \( d \) on \( G/K^\flat \) gives rise to a pseudometric \( \dot d \) on \( G \) with kernel \( K^\flat \).
We may define a (not necessarily proper) metric \( d_1 \) on \( G/K \) by choosing \( \epsilon \) small enough that \( B(eK^\flat, \epsilon) \subseteq UK^\flat \), and then setting
\[
d_1(xK, yK) \coloneqq
\begin{cases}
  \min\{ d(\pi x, \pi y) , \epsilon)
           & \text{if \( x, y \in gU K \) for some \( g \in G \)} \\
  \epsilon & \text{otherwise}.
\end{cases}
\]
We leave to the reader the task of checking that a suitable linear combination \( d_{G/K} \) of \( \dot d \) and \( d_1 \) has the required properties.
\end{proof}

\begin{lemma}\label{lem:unwinding-product-groups}

Let \( H \) be a locally compact group with closed subgroups \( S_1 \) and \( S_2 \) such that \( H = S_1 \cdot S_2 \), and let \( H_\times = S_1 \times S_2 \).
Let \( \omega: H_\times \to H \) be the mapping \( (s_1,t) \mapsto s_1t^{-1} \).
Then \( \omega \) is a homeomorphism.
Further, if \( \dot{d} \) is a left-invariant and right-\( S_2 \)-invariant continuous admissible pseudometric on \( H \), then \( \dot{d}_\times \), given by
\[
\dot{d}_\times((s_1,s_2), (s_1', s_2')) = \dot{d}(s_1s_2^{-1}, s_1's_2'^{-1})
\qquad\forall s_1,s_1' \in S_1 \quad\forall s_2,s_2' \in S_2,
\]
is a left-invariant continuous admissible pseudometric on \( S_1 \times S_2 \).
\end{lemma}

\begin{proof}

Since \( s_2 \mapsto s_2^{-1} \) is a homeomorphism of \( S_2 \) and \( \psi: (s_1,s_2) \to s_1s_2 \) is a homeomorphism, \( \omega \) is a homeomorphism from \( H_\times \) to \( H \).
Since \( \dot{d} \) is a left-\( S_1 \)-invariant and right-\( S_2 \)-invariant pseudometric, \( \dot{d}_\times \) is a left-\( (S_1 \times S_2) \)-invariant pseudometric.
Since \( \dot{d} \) is continuous, so is \( \dot{d}_\times \).

Let \( K \) be the kernel of \( \dot d \) and \( K_\times \) be the kernel of \( \dot d_\times \).
From Lemma \ref{lem:pseudometrics},
\begin{align*}
p_\times K_\times
&= \{ q_\times \in H_\times : \dot d_\times(p_\times,q_\times) =0 \} \\
\noalign{\noindent{and}}
p K
&= \{ q \in H : \dot d(p,q) =0 \} .
\end{align*}
for all \( p_\times \in H_\times \)  and all \( p \in H \).
The definition of \( \dot{d}_\times \) then implies that
\begin{equation*}
q \in pK_\times \iff
\dot d_\times(p,q) =0 \iff
\dot d(\omega(p),\omega(q)) =0 \iff
\omega(q) \in \omega(p) K.
\end{equation*}
It follows that \( \omega \) induces a homeomorphism from \( H_\times/K_\times \) to \( H/K \),
which is an isometry by construction.
The admissibility of \( \dot{d} \) and that of \( \dot{d}_\times \) are therefore equivalent.
\end{proof}

We note conversely that if the map \( \omega: S_1 \times S_2 \to H \), given by \( \omega(s_1,s_2) = s_1s_2^{-1} \) is an isometry from the pseudometric group \( S_1 \times S_2 \) to the pseudometric group \( H \), then the pseudometric on \( H \) must be right-\( S_2 \)-invariant.

\subsection{Simply transitive isometry groups}\label{ssec:simply-transitive}

Here we are interested in the question whether a homogeneous metric space admits a simply transitive isometry group.

\begin{theorem}\label{thm:isometry-isomorphism}
Let \( (M,d) \) be a homogeneous metric space, \( H \) denote \( \Iso(M,d) \) and \( K \) denote the stabiliser of a base point \( o \) in \( M \); let \( G \) be a group.
Then the following are equivalent:
\begin{enumerate}
\item there is a simply transitive action of \( G \) on \( M \) by isometries;
\item there is a left-invariant metric \( d_G \) on \( G \) such that \( (G, d_G) \) is isometric to \( (M, d) \);
\item there is a monomorphism \( \alpha : G \to H \) such that \( \alpha(G) \cap K = \{ e_H \} \) and \( H = \alpha(G) K \).
\end{enumerate}
In addition, if (i), (ii) and (iii) hold, and \( G \) is a topological group, then the following are equivalent:
\begin{enumerate}\setcounter{enumi}{3}
  \item the metric \( d_G \) of (ii) is admissible;
  \item \( \alpha \) is a homeomorphism from \( G \) to \( \alpha(G) \), equipped with the relative topology as a subset of \( H \).
\end{enumerate}
Finally if (i) to (v) all hold, then \( \alpha(G) \) is closed in \( H \).
\end{theorem}

\begin{proof}
Suppose that (i) holds, and denote the action by \( \alpha \).
We define the left-invariant pull-back metric \( d_G \) on \( G \) by
\[
d_G (g, g') \coloneqq  d( \alpha(g) o, \alpha(g') o)
\qquad\forall g, g' \in G;
\]
then the map \( g \mapsto \alpha(g) o \) is an isometry from \( (G,d_G) \) to \( (M,d) \), so (ii) holds.

Assume that (ii) holds, and that \( F \colon (G, d_G) \to (M, d) \) is an isometry.
By composing with a translation of \( G \) if necessary, we may suppose that \( F(e) =o \).
For \( g \in G \), define the mapping \( \alpha(g): M \to M \) by the formula
\[
\alpha(g)(p) \coloneqq  F(g F^{-1}(p))
\qquad\forall p \in M.
\]
It is straightforward to check that (iii) holds.

Now assume that (iii) holds.
Then \( \alpha(G) \) is transitive since every element of  \( H \) may be written as \( \alpha(g) k \) where \( g \in G \) and \( k \in K \), and \( \alpha(G) \) is simply transitive since \( \alpha(G) \cap K = \{e_H\} \).
So \( G \) acts simply transitively by isometries on \( (M,d) \), and (i) is proved.

Now assume that (i), (ii) and (iii) hold, and that \( G \) is a topological group.
Consider, for \( g \) and a net of elements \( g_\nu \) in \( G \), the following statements:
\begin{enumerate}\renewcommand{\labelenumi}{(\alph{enumi})}
  \item \( g_\nu \to g \) in \( G \) as \( \nu \to \infty \);
  \item \( g_\nu g' \to gg' \) in \( G \) as \( \nu \to \infty \) for all \( g' \in G \);
  \item \( d_G(g_\nu g' , gg') \to 0 \) as \( \nu \to \infty \) for all \( g' \in G \);
  \item \( d(\alpha(g_\nu g')(o), \alpha(gg')(o)) \to 0 \) as \( \nu \to \infty \) for all \( g' \in G \);
  \item \( \alpha(g_\nu)(p) \to \alpha(g)(p) \) in \( M \) as \( \nu \to \infty \) for all \( p \in M \);
  \item \( \alpha(g_\nu) \to \alpha(g) \) in \( H \).
\end{enumerate}
Since \( G \) is a topological group, (a) and (b) are equivalent, while (c) and (d) are equivalent by definition, (d) and (e) are equivalent by writing \( p = g'(o) \), and (e) and (f) are equivalent by definition of the topology on \( H \).
Further, (b) and (c) are equivalent if and only if \( d_G \) is admissible.

If \( d_G \) is admissible, then (a) and (f) are equivalent, so \( \alpha \) is a homeomorphism of \( G \) onto its image in \( H \).
Conversely, if the topology of \( \alpha(G) \) induced by that of \( G \) coincides with that induced by \( H \), then (a) and (f) are equivalent, and so \( d_G \) is admissible.

We now suppose that if (i) to (v) all hold, and show that \( \alpha(G) \) is closed in \( H \).
We take a net \( (g_\nu) \) in \( G \) such that \( \alpha(g_\nu) \to h \) in \( H \), and need to prove that \( h \in \alpha(G) \).
Now \( h = \alpha(g)k \), where \( g \in G \) and \( k \in K \); by replacing \( g_\nu \) by \( g^{-1}g_\nu \) if necessary, we may assume that \( \alpha(g_\nu) \to k \) in \( H \), and must prove that \( k = e \).
Now
\[
d_G(g_\nu , e_G) = d( \alpha(g_\nu)o,o) \to d(ko,o) = 0,
\]
so \( g_\nu \to e_G \), as required.
\end{proof}

The theorem above shows that, if we are looking for metric groups that are isometric to a given homogeneous space, and whose topology is related to that of the homogeneous space, it will suffice to look for closed subgroups of the isometry group.
Actually, since our homogeneous spaces are assumed to be connected, it will suffice to look for closed subgroups of the connected component of the identity in the isometry group.
The conditions in the theorem will appear quite often, and so it is useful to have some additional notation.

\begin{definition}\label{def:various-products}
If \( G \) and \( K \) are subgroups of a group \( H \), then \( GK \) denotes the subset \( \{ gk: g \in G, k \in K\} \) of \( H \).

We write \( H = G \cdot K \) to indicate that \( G \) and \( K \) are closed subgroups of a locally compact group \( H \), such that the mapping \( (g,k) \mapsto gk \) from the set
\( G \times K \) with the product topology to \( H \) is a homeomorphism.

If \( H = G \cdot K \)  and moreover \( G \) is normal in \( H \),
then we write \( H = G \rtimes K \) and call \( H \) the semidirect product of \( G \) and \( K \).
\end{definition}

\begin{remark}\label{rem:more-on-G-dot-K}
First, if \( H = G \cdot K \), then \( G \) is homeomorphic to \( H/K \).
Further, if \( H \) is connected, so are \( G \) and \( K \).

Next, the subgroup \( K \) is not required to be compact in Definition \ref{def:various-products}.
However, if \( K \) is compact, then the condition that the mapping is a homeomorphism in the definition of the expression \( H = G \cdot K \) is satisfied provided only that the mapping is a bijection.
Indeed, if \( (g_\nu) \) and \( (k_{\nu'}) \) are nets such that \( g_\nu \to g \) in \( G \) and \( k_{\nu'} \to k \) in \( K \), then
\( g_\nu k_{\nu'} \to gk \) in \( H \) since multiplication is continuous.
Conversely if \( G \) is closed and \( K \) is compact, and \( g_\nu k_{\nu} \to h \) in \( H \), then, by passing to a subnet, we may assume that \( k_{\nu} \to k \) in \( K \), and then \( g_\nu \to hk^{-1} \) in \( H \) and so in \( G \) since \( G \) is closed; if the net \( k_{\nu} \) had two limits, then we could factorise \( h \) as a product \( gk \) in two distinct ways, which contradicts bijectivity.
\end{remark}

\begin{remark}\label{rem:metric-groups-give-products}
If \( (G,d) \) is a metric group, and \( K \) is the stabiliser in \( \Iso(G,d) \) of \( e \) in \( G \), then \( G \) and \( K \) are both closed in \( \Iso(G,d) \), by Remark \ref{rem:metric-groups-are-closed} and Theorem \ref{thm:isom-group-props}, and have trivial intersection, so we may write \( \Iso(G,d) = G \cdot K \).
Hence if \( H \) is a subgroup of \( \Iso(G,d) \) that contains \( G \), then we may write \( H = G \cdot K_0 \), where \( K_0 \) is the stabiliser in \( H \) of \( e \) in \( G \).
\end{remark}

The next lemma is about groups that nearly act simply transitively.

\begin{lemma}\label{lem:H=alphaGK}
Suppose that \( \alpha \) is a continuous monomorphism of a connected locally compact group \( G \) into a connected metrisable locally compact group \( H \), and that \( K \) is a compact subgroup of \( H \).
Let \( \omega: G \times K \to H \) be the continuous mapping
\( (g,k) \mapsto \alpha(g)k^{-1} \).
Suppose also that there are neighbourhoods \( U_0 \) of \( e_G \) in \( G \) and \( V_0 \) of \( e_K \) in \( K \) such that, if \( e_G \in U \subseteq U_0 \) and \( e_K \in V \subseteq V_0 \), then the restricted mapping \( \omega|_{U \times V} \) is a bijection onto a neighbourhood of \( e_H \) in \( H \).
Then
\begin{enumerate}
\item \( H=\alpha(G)K \),
\item there is an open set \( U_1 \) in \( G \) containing \( e_G \) such that the restriction \( \omega|_{U_1 \times K} \) is a homeomorphism onto its image, with the relative topology;
\item \( \alpha^{-1}(K) \) is discrete in \( G \) and \( G/\alpha^{-1}(K) \) is homeomorphic to \( H/K \);
\item \( \alpha(G)\cap K \) is finite if and only if \( \alpha(G) \) is closed in \( H \); and
\item if \( \alpha(G) \cap K = \{e_H\} \), then \( H = \alpha(G) \cdot K \).
\end{enumerate}
\end{lemma}

\begin{proof}
To prove (i), we equip the connected space \( H/K \) with an \( H \)-invariant metric, by using Corollary \ref{cor:H/K-is-metrisable}, so that \( G \) acts isometrically on \( H/K \).
By assumption, \( \omega(G \times K) \) contains a neighbourhood of \( e_H \), so the image of the base point \( K \) in \( H/K \) under \( \alpha(G) \) contains a neighbourhood of the base point, whence \( G \) acts transitively on \( H/K \) by part (ii) of Theorem \ref{thm:general-isometry-group}, and \( H = \alpha(G)K \).
Hence (i) holds.

Now we prove (ii).
By compactness, there exist finitely many points \( k_1 \), \dots, \( k_I \) in \( K \) such that \( K = \bigcup_i k_i V_0 \).
Suppose that \( i \) in \( \{1, \dots, I\} \).
If \( \alpha(U_0) \cap k_i V_0 \neq \emptyset \), then there exist \( u_i \in U_0 \) and \( v_i \in V_0 \) such that \( \alpha(u_i) = k_i v_i \).
Now if \( u \in U_0 \cap \alpha^{-1}(K) \), then there exist \( j \) in \( \{1, \dots, I\} \) and \( v \in V_0 \) such that \( \alpha(u) = k_j v \).
We deduce that
\[
\alpha(u)v^{-1} = k_j = \alpha(u_j) v_j^{-1},
\]
whence \( u = u_j \).
Thus
\[
U_0 \cap \alpha^{-1}(K)
= \{ u_j:
\alpha(U_0) \cap k_j V_0 \neq \emptyset, \ k_j = \alpha(u_j) v_j^{-1} \},
\]
which is a finite set.
It follows that there exists a neighbourhood \( U'_0 \) of \( e_G \) in \( G \) such that \( \alpha(U'_0) \cap K = \{e_H\} \).
We take a neighbourhood \( U_1 \) of \( e_G \) in \( G \) such that \( U_1^{-1}U_1 \subseteq U'_0 \).
Now if \( g_1, g_2 \in U_1 \) and \( k_1, k_2 \in K \) are such that
\( \alpha(g_1)k_1^{-1} = \alpha(g_2)k_2^{-1} \), then \( \alpha(g_2^{-1} g_1) = k_2^{-1} k_1 \) and \( g_2^{-1} g_1 \in U'_0 \) and \( k_2^{-1} k_1 \in K \).
It follows that \( g_1 = g_2 \) and \( k_1 = k_2 \), and \( \omega|_{U_1 \times K} \) is a bijection.
The hypothesis on \( \omega \) implies that \( \omega|_{U_1 \times K} \) is open, and it is continuous by definition.

Part (iii) follows immediately from (ii).
Indeed, \( \alpha^{-1}(K) \cap U_1 = \{e_G\} \), so the point \( e_G \) is isolated in \( \alpha^{-1}(K) \).
By a translation argument, every point of \( \alpha^{-1}(K) \) is isolated, and
\( \alpha^{-1}(K) \) is discrete.
Further, standard isomorphism theorems show that \( \alpha \) induces a continuous bijection, \( \dot\alpha \) say, of \( G / \alpha^{-1}(K) \) onto \( H/K \).
The hypo\-thesis on \( \omega \) implies that \( \dot\alpha \) is open, so \( \dot\alpha \) is indeed a homeomorphism.

We now prove one implication of (iv).
If \( \alpha(G) \) is closed in \( H \), then \( \alpha(G)\cap K \) is a closed subgroup of \( K \), so is compact.
Now \( G \) is connected and locally compact by hypothesis, and so is \( \sigma \)-compact; further, \( \alpha^{-1}(K) \) is a discrete subgroup of \( G \), and hence there is a neighbourhood \( W \) of \( e_G \) such that the sets \( xW \), as \( x \) ranges over \( \alpha^{-1}(K) \), are disjoint.
It follows that \( \alpha^{-1}(K) \) is countable, whence \( \alpha(G)\cap K \) is a countable compact group, hence finite (see the notes and remarks at the end of this chapter).

Conversely, to complete the proof of (iv), we assume that \( \alpha(G)\cap K \) is finite, and take a net \( (g_\nu) \) in \( G \) such that \( \alpha(g_\nu) \to h \) in \( H \); we must show that \( h = \alpha(g^*) \) for some \( g^* \) in \( G \), and \( g_\nu \to g^* \) in \( G \).
By the transitivity of the \( G \) action on \( H/K \), proved in (i), there exists \( g \) in \( G \) such that \( h \in \alpha(g)K \); then \( \alpha(g^{-1}g_\nu) \to \alpha(g^{-1})h \) in \( H \), and, by replacing \( g_\nu \) and \( h \) by \( g^{-1}g_\nu \) and \( \alpha(g^{-1})h \), we may assume that \( h \in K \).
Next, from (ii), if \( \nu \) is large enough, there exists \( \tilde g_\nu \) in \( U_1 \) such that \( \alpha(\tilde g_\nu)K = \alpha(g_\nu)K \), and \( \tilde g_\nu \to e \) in \( G \); by replacing \( g_\nu \) by \( \tilde g_\nu^{-1} g_\nu \), we may assume that \( \alpha(g_\nu) \in K \).
Since \( \alpha(G) \cap K \) is finite, the convergent net \( g_\nu \) is eventually constant, so the limit is in \( G \).

Finally, if \( \alpha(G)\cap K=\{e_H\} \), then \( \alpha(G) \) is closed in \( H \) from part (iv).
By Remark \ref{rem:more-on-G-dot-K}, \( H = \alpha(G) \cdot K \).
\end{proof}

We now clarify when two connected locally compact groups may be made isometric.

\begin{corollary}\label{cor:isometric-groups-complementary-subgroups}
Let \( G_1 \) and \( G_2 \) be connected locally compact groups.
Then \( G_1 \) and \( G_2 \) may be made isometric if and only if there exists a metrisable locally compact group \( H \) with a compact subgroup \( K \) such that \( H = G_1 \cdot K = G_2 \cdot K \).
\end{corollary}

\begin{proof}
If \( G_1 \) and \( G_2 \) may be made isometric, then we may assume that the isometry sends \( e_1 \) to \( e_2 \), and that they have a common isometry group, \( H \) say.
Then we may take \( K \) to be the stabiliser of \( e_1 \) in \( G_1 \) or \( e_2 \) in \( G_2 \).

Conversely, given \( H \) and \( K \), Corollary \ref{cor:H/K-is-metrisable} constructs a metric \( d \) on \( H/K \) so that \( H \) acts isometrically on \( (H/K, d) \).
Since \( G_j \) acts simply transitively on \( H/K \), we may transport the metric \( d \) on \( H/K \) to \( G_j \) by the formula
\[
d_j(x,y) = d(xK, yK)
\qquad\forall x, y \in G_j,
\]
and obtain left-invariant metrics on \( G_j \), when \( j \) is \( 1 \) or \( 2 \).
Now \( (G_1,d_1) \) and \( (G_2, d_2) \) are both isometric to \( (H/K,d) \), and so are isometric to each other.
\end{proof}

\subsection{Invariant measure and growth}\label{ssec:polgr}
Every locally compact group \( G \) admits a Haar measure \( \mu \), that is, a left-invariant Radon measure that gives positive mass to all nonempty open sets; the Haar measure is unique up to a multiplicative constant.

If \( K \) is a compact subgroup of a locally compact group \( G \), with a left-invariant Haar measure \( \mu \), and \( \pi:G\to G/K \) is the quotient map, then there is a unique \( G \)-invariant Radon measure \( m \) on \( G/K \) such that
\begin{equation}\label{eq03162337}
m(U) = \mu(\pi^{-1}(U) )
\end{equation}
for all Borel subsets  \( U \) of \( G/K \);  see \cite[\S15]{Hewitt-Ross}.
From Theorem~\ref{thm:general-isometry-group} and Corollary \ref{cor:H_0-transitive}, if \( (M, d) \) is a homogeneous metric space and \( G \) is \( \Iso(M, d) \), then \( M \) may be identified with \( G/K \) for some compact subgroup \( K \) of \( G \).
Thus every homogeneous metric space \( (M,d) \) admits a unique (up to scalar multiplication) Radon measure that is invariant under \( \Iso(M,d) \).

A compactly generated locally compact group \( G \) with Haar measure \( \mu \) is said to be of \introd{polynomial growth} if there is a compact generating neighbourhood \( U \) of the identity in \( G \) such that
\begin{equation}\label{eq:def-poly-growth-group}
\mu(U^n) \le C n^Q
\qquad\forall n\in\Z^+.
\end{equation}
If \( G \) is of polynomial growth and \( V \) is another compact generating neighbourhood of the identity in \( G \), then the same equation holds but with a possibly different constant \( C \).
From part (i) of Lemma \ref{lem:prelim}, \( m(V_{n}(o,\ell)) \) grows no faster than exponentially in \( n \); however, it may grow only polynomially, or even be bounded.

The following definition is standard, at least for quasigeodesic metrics.

\begin{definition}
Let \( (M,d) \) be a homogeneous metric space.
We say that \( (M,d) \) is of \introd{polynomial growth}  if for a given point and hence for an arbitrary point \( o \) in \( M \),
\begin{equation}\label{eq:poly-growth-space}
m(B(o, r)) \le C r^Q
\end{equation}
for all sufficiently large \( r \).
\end{definition}

At this point, for a metric Lie group we have two notions of polynomial growth, which in general are not equivalent.
For instance, \( \R \) is a group of polynomial growth, but if we define the metric \( d \) on \( \R \) by
\[
d(x,y) \coloneqq  \log(|x-y|+1) \qquad\forall x,y \in \R,
\]
then \( (\R, d) \) is not of polynomial growth.
More generally, \( m(B(o,r)) \) may grow much faster in \( r \) than \( m(V_{n}(o,\ell)) \) grows in \( n \).

A {proper quasigeodesic} homogeneous metric space is of polynomial growth if and only if its isometry group is of polynomial growth.
For general metric spaces, only one implication may be proved, as follows.

\begin{lemma}\label{lem03171107}
If \( M \) is a homogeneous metric space of polynomial growth, and \( G \) is a subgroup of \( \Iso(M,d) \) that acts transitively on \( M \), then \( G \) is of polynomial growth.
\end{lemma}

\begin{proof}
By part (v) of Theorem \ref{thm:general-isometry-group}, we may fix \( o\in M \) and \( \ell \in \R^+ \) such that the set \( U\coloneqq \{f\in G:f(o)\in  \barB(o, \ell)\} \) is a compact neighbourhood of the identity element in \( G \) and
\[
U^n=\{f\in G:f(o)\in V_n(o, \ell)\}.
\]

Let \( \mu \) be a Haar measure on \( G \) and \( m \) be a \( G \)-invariant measure on \( M \) such that \eqref{eq03162337} holds, as discussed at the beginning of this section, and suppose that \( m(B(o, r))\le Cr^Q \) for all sufficiently large \( r \).
Then
\[
\mu(U^n) = m(V_n(o, \ell)) \le C\ell^Q (n+1)^Q
\]
since \( V_n(o, \ell)\subseteq B(o, (n+1) \ell) \).
\end{proof}

We now connect growth to the doubling property.

\begin{definition}
Let \( (M,d) \) be a homogeneous metric space.
We say that \( (M,d) \) is \introd{doubling} if there is a constant \( N \) such that each ball of radius \( 2r \) may be covered by at most \( N \) balls of radius \( r \) for all \( r\in\R^+ \).
We say that \( (M,d) \) is doubling \emph{at small scale} or \emph{at large scale} if the covering property holds for all sufficiently small \( r \) or sufficiently large \( r \).
\end{definition}

Polynomial growth is often linked with the property of being doubling at large scale.
Indeed, if \( (M,d) \) is {proper quasigeodesic}, then it is of polynomial growth if and only if it is doubling at large scale; see, for instance,~\cite{Cornulier-sublinear}.
However, these two notions are not equivalent in our setting.
More precisely, if a metric space \( (M,d) \) is doubling at large scale, it may fail to be of polynomial growth; see Remark \ref{rem:doubling-no-polygr}.
However, if \( (M,d) \) is doubling at large scale and proper, then it is of polynomial growth; see Remark~\ref{rem01041721}.
Conversely, if \( (M,d) \) is of polynomial growth, then it is proper, but it does not need to be doubling at large scale; see Remarks~\ref{rem03202156} and~\ref{rem04111303}.
This paradoxical behaviour reflects the fact that polynomial growth and properness are not quasi-isometric invariants when metrics are not {proper quasigeodesic}.

\begin{remark}\label{rem:doubling-no-polygr}
The space \( (\R,d) \), where \( d \) is given by \( d(x, y)=\min\{|x-y|, 1\} \), is trivially doubling at large scale, but is evidently not of polynomial growth.
\end{remark}

\begin{remark}\label{rem01041721}
If a homogeneous metric space is proper and doubling, then it is of polynomial growth.
Indeed, if one and hence every ball of radius \( 2r \) may be covered by \( N \) balls of radius \( r \), then it may be seen that
\[
m(B(o, r)) \le N m(B(o, 1)) r^{\log_2(N)}
\]
when \( r>1 \).
\end{remark}

\begin{remark}\label{rem03202156}
It is easy to construct homogeneous metric spaces of polynomial growth that are not locally doubling (consider the product \( \prod_{n \in \N} (\R/ 2^{-n}\Z)  \), where each factor has the metric induced from the euclidean metric on \( \R \) and the product has the \( \ell^{\infty} \)-metric) and to construct nonhomogeneous metric spaces of polynomial growth that are not doubling at large scale (consider sparsely branching \( \R \)-trees of unbounded degree).
The next example shows that having polynomial growth does not even imply being doubling at large scale for proper connected homogeneous metric spaces.

Consider the piecewise linear function \( D: [0, \pinfty) \to [0, \pinfty) \) with nodes at \( (0, 0) \), \( (1, 1) \), and \( (x_n, y_n) \), where \( n \in \N \), given by \( x_n = 2^{2^{n+1}} \) and \( y_n = 2^{2^n} \).
The nodes all lie on the graph \( y = x^{1/2} \), so \( D \) is evidently increasing and concave.
Hence \( d(x, y) \coloneqq  D(|x-y|) \) is a translation-invariant metric on \( \R \), and \( |B(x_0, r)| = 2D^{-1}(r) \) for all \( r \in \R^+ \).

Take \( r = y_n \), and consider the ratio
\[
\frac{|B(0, 2r)|}{|B(0, r)|} = \frac{D^{-1}(2y_n)}{D^{-1}(y_n)} = \frac{D^{-1}(2y_n)}{x_n} \, .
\]
We shall now show that the right-hand fraction is unbounded in \( n \), which shows that \( d \) is not a doubling metric.

If \( (x, y) \) lies on the line segment between \( (x_n, y_n) \) and \( (x_{n+1}, y_{n+1}) \), then
\[
\frac{y - y_n}{x - x_n} = \frac{y_{n+1} - y_n}{x_{n+1} - x_n} = \frac{y_n^2 - y_n}{y_n^4 - y_n^2} = \frac{1}{y_n(y_n+1)},
\]
so
\[
x = x_n + y_n(y_n + 1)(y-y_n).
\]
Since \( 2y_n \leq y_{n+1} \), if \( D(x) = 2y_n \), then \( (x, 2y_n) \) lies on the line segment, and so \( x = x_n + x_n(y_n +1) \) and
\[
\frac{D^{-1}(2y_n)}{x_n} = \frac{x}{x_n} = y_n + 2,
\]
which tends to infinity as \( n \) increases.

The same argument also shows that if \( (x, y) \) lies on this line segment, then
\begin{align*}
|B(0, y)| = 2x &= 2x_n + 2y_n(y_n + 1)(y-y_n) \\
&\leq 2y_n^2 + 2y_n y(y_n+1) \leq 2y^2 + 2y^2(y+1),
\end{align*}
and it follows that \( d \) is of polynomial growth.
\end{remark}

\begin{remark}\label{rem04111303}
If \( (M, d) \) is a homogeneous metric space of polynomial growth, then it is proper.
Indeed, if there were a noncompact closed ball \( \barB(p, r) \), then there would be \( \epsilon \in \R^+ \) and points \( x_i \) in \( \barB(p, r) \), where \( i\in \N \), such that \( d(x_i, x_j)>2\epsilon \) if \( i \neq j \).
But then it would follow that
\[
C (r+\epsilon)^Q
\ge m(\barB(p, r+\epsilon))
\ge \sum_{i\in\N} m(B(x_i, \epsilon))
= \pinfty ,
\]
which would be a contradiction.
\end{remark}

\subsection{Notes and remarks}\label{sec:notes-basics}
Here we include some additional comments on the results established above.

\subsubsection*{2.2.\enspace Homogeneous metric spaces}
If \( f \) is a metric preserving mapping of a homogeneous metric space \( (M,d) \), in the sense that condition \eqref{eq:metric_preserving} holds, then \( f \) is surjective; this need not be true for metric preserving mappings of general metric spaces.
The proof involves first composing with an isometry, so that \( f(o) = o \), then using compactness to show that \( f \) is bijective on closed balls (defined relative to the Busemann gauge), and finally letting the radius of the balls go to infinity.

\subsubsection*{2.3.\enspace Metric spaces and coset spaces}
The simple observations of this section raise further questions about isometry groups.
Given a metric space \( (M,d) \) and a transitive isometry group \( G \) of \( M \), let \( o \) be a point in \( M \) and \( K \) be the stabiliser of \( o \) in \( G \).
Is there a left-invariant metric \( d_G \) on \( G \) such that
\[
d(p,q) = \min \{ d_G(g,h) : g(o) = p, h(o) = q \}?
\]
Under what circumstances do the stabilisers of all points in \( M \) have the same diameter?
And if we equip \( M \) with the metric \( d' \) that is defined by the right-hand side of the above formula, is it true that \( G = \Iso(M,d') \)?

\subsubsection*{2.4.\enspace Modifying metrics}
The use of pseudometrics leads to another interpretation of Theorem \ref{thm:isom-group-props}.
Given a metric on a homogeneous metric space \( (M,d) \), we may define a family of pseudometrics \( \dot d_x \), where \( x \) runs over \( M \), on the isometry group \( H \), by setting \( \dot d_x (g,h) \coloneqq  d(gx, hx) \) for all \( g,h \in H \).
If \( g,h \in H \) and \( \dot d_x (g,h) = 0 \) for all \( x \) in \( M \), then \( g^{-1}h \) acts trivially on \( M \), so \( g = h \).
Thus expressions such as \( \sup_{x \in M} \dot d_x(g,h) \), where \( x \) runs over \( M \), only vanish when \( g = h \).
The pseudometrics \( \dot d_x \) satisfy the inequality
\[
\begin{aligned}
\dot d_x (g,h)
&= d(gx, hx)
\leq d(gx, gy)+ d(gy, hy) + d(hy, hx) \\
&\leq d(gy, hy) + 2 d(x,y)
= \dot d_y (g,h) + 2 d(x,y)
\end{aligned}
\]
for all \( g,h \in H \), and if \( M \) is unbounded, then \( \sup_{x \in M} \dot d_x(g,h) \) might well be infinite.
However, the formula given in Theorems \ref{thm:isom-group-props} and \ref{thm:general-isometry-group} is but one of many ways of combining these pseudometrics to get a metric on \( H \).

We will use Corollary \ref{cor:left-G-right-K-invariant metric} later.
For future purposes, we note that if \( K_o \) and \( K \) are compact subgroups of a Lie group \( G \) and \( K_o \subset K \), then there exists a riemannian metric \( d \) on \( G/K_o \) such that
\[
d(gg'kK_o, gg''kK_o) = d(g'K_o, g''K_o)
\qquad\forall g,g',g'' \in G \quad\forall k \in K.
\]
All riemannian metrics are bi-Lipschitz equivalent.

{
The reader may wish to check whether the new metrics produced in  Corollary \ref{cor:quotient-metric} or Lemma \ref{cor:quotient-metric} are proper or derived semi-intrinsic (as defined just before Lemma \ref{lem:Busemann-gauge}) or proper quasigeodesic or geodesic if the initial metric has this property.}

Corollary \ref{cor:H/K-is-metrisable} shows that every pair consisting of a connected metrisable group \( H \) and a suitable compact subgroup \( K \) thereof arises as \emph{a} group of isometries of a homogeneous metric space and the stabiliser of a point therein.
However, this does not answer the subtler question, whether \( H \) is necessarily the whole connected component of the identity in the full group of isometries of the metric space.
For example, if we take \( H \) and \( K \) to be \( \R^n \) and \( \{e\} \), and equip \( H/K \) (that is, \( \R^n \)) with a translation invariant metric, the connected component of the identity in the isometry group may be larger that \( H \).
Indeed, the isometry group of \( \R^n \) equipped with any translation invariant riemannian metric is isomorphic to \( \R^n \rtimes \mathrm{O}(n) \).
However, if we use the \( \ell^\infty \) metric, then we can ensure that the connected component of the isometry group is \( H \).
We do not know whether, given a general pair \( H \) and \( K \) as in the corollary, there exists an admissible \( H \)-invariant metric \( d \) on \( H/K \) such that \( H \) is the connected component of the identity in \( \Iso(H/K,d) \).

\subsubsection*{2.5.\enspace Simply transitive isometry groups}

Definition \ref{def:various-products} deserves a further comment.

In the definition of a semidirect product, it suffices to suppose that \( G \) and \( K \) are closed subgroups and \( G \) is normal, and the mapping \( (g,k) \mapsto gk \) is a bijection.
Indeed, if \( g_\nu \to g \) in \( G \) and \( k_\nu \to k \) in \( K \), then \( g_\nu k_\nu \to gk \) in \( H \) by definition.
Conversely, if \( g_\nu k_\nu \to gk \) in \( H \), then \( Gk_\nu \to Gk \) in the quotient group \( G\backslash H \), which is homeomorphic to \( K \) by \cite[Theorem 5.26]{Hewitt-Ross}, that is, \( k_\nu \to k \) in \( K \), and hence also \( g_\nu \to g \) in \( G \).

The results of this and the previous section offers us an alternative viewpoint on homogeneous metric spaces and their isometry groups.
We begin by taking the basic object to be a metric space \( (M,d) \) with a topology that is compatible with the metric, and showed that a closed subgroup \( H \) of the isometry group that acts transitively is a topological group with a metric compatible with the topology, and that the projection from \( H \) to \( M \) is both a metric projection and a topological projection (that is, it is continuous and open).
However, we might also take the basic object to be a metrisable topological group \( H \), and consider various quotient spaces \( H/K \) with the quotient topologies and quotient metrics, or even just a topological group \( H \) acting on a quotient space \( H/K \) that may be endowed with a metric that is compatible with the quotient topology.

\subsubsection*{2.6.\enspace Invariant measure and growth}

Suppose that \( M \) is the coset space \( G/K \), where \( G \) is a (not necessarily connected) locally compact group and \( K \) is a compact subgroup.
We claim that if \( M \) is compact and countable, then \( M \) is finite.
Indeed, \( M \) admits a \( G \)-invariant Radon measure \( m \), and the regularity of \( M \) implies that there is an open set \( U \) of positive but finite measure.
Since \( M \) is compact, \( m(M) \) must be finite, since it may be covered by finitely many translates of \( U \).
All points of \( M \) have the same measure.
If points had measure \( 0 \), then \( M \) would have measure \( 0 \); hence points have positive measure and the cardinality of \( M \) is \( m(M)/m(\{p\}) \) for any point \( p \).

\section{Lie theory and metric spaces}\label{sec:Lie-theory-metric-space}

This chapter is concerned with homogeneous metric manifolds, which for us are locally euclidean, but not \emph{a priori} smooth.
However, as a consequence of the solution of Hilbert's fifth problem, they are quotient spaces of Lie groups, and hence may be given analytic structures such that the connected component of the identity in the isometry group acts analytically.

In this chapter, we review the Gleason--Iwasawa--Montgomery--Yamabe--Zippin structure theorem of almost connected locally compact groups in Section \ref{ssec:GIMYZ-thm} and then the Iwasawa theory of maximal compact subgroups in \ref{ssec:comp-subgps}.
We prove our first main theorem, that homogeneous metric spaces may be approximated by homogeneous metric manifolds, in Section \ref{ssec:main-1}.
We next look at more Lie theory and its interaction with metric spaces in Sections \ref{ssec:lieth-1} and \ref{ssec:lieth-2}.

We consider more sophisticated Lie theory, such as the Levi and Iwasawa decompositions in Section \ref{ssec:lieth-3} and polynomial growth and amenability in Section \ref{ssec:polygr-amenability}, and see how this enables us to prove our second main theorem, on the finer structure of homogeneous metric manifolds in Section \ref{ssec:proof-thm-main-2}.
We should mention that there have been exhaustive investigations into the homogeneous spaces of semisimple Lie groups and those of solvable Lie groups, but the general case seems less well known.

Many of the results here may be proved by a reduction to the riemannian case and then appealing to the appropriate classical result.
Indeed, as we shall see in Corollary \ref{cor:iso-of-two-manifolds-is-Lie-Riemannian}, if two homogeneous metric manifolds are isometric, then they admit riemannian structures for which they are isometric.
However, classical riemannian geometers did not consider quasi-isometries, and at least some of our theorems are not true in the context of isometries, and are certainly not in the literature (at least in forms that we are able to recognise).

\subsection{The main structure theorem}\label{ssec:GIMYZ-thm}

A locally compact group \( G \) is said to be \introd{almost connected} if \( G/G_0 \) is compact, where \( G_0 \) is the connected component of the identity in \( G \); this is closed and normal.
The isometry groups of homogeneous metric spaces are almost connected, by Theorem \ref{thm:isom-group-props}.

We recall without proof one version of the solution to Hilbert's fifth problem by Gleason, Iwasawa, Yamabe, Montgomery and Zippin.
See, for instance, \cite[Section 4.6]{Montgomery-Zippin-TTG} or \cite[Theorem 1.6.1]{Tao-Hilbert-book}.

\begin{theorem}\label{thm:GIMYZ}
Let \( G \) be an almost connected locally compact group.
Then every neighbourhood \( U \) of the identity in \( G \) contains a compact normal subgroup \( N \) such that \( G/N \) is locally euclidean.
If \( G \) is locally euclidean, then \( G \) may be given a unique analytic structure for which it is a Lie group.
\end{theorem}

The following related result was first stated by Szenthe \cite{Szenthe}.
Unfortunately, there was a mistake in his argument, discovered by Antonyan, but the gap was filled independently by Antonyan and Dobrowolski and by George Michael.
See Glockner's review \cite{Glockner} for the history and location of the proof.

\begin{theorem}\label{thm:Szenthe}
If \( K \) is a compact subgroup of an almost connected locally compact group \( H \), and \( \bigcap_{h \in H} hKh^{-1} = \{e\} \), then the following are equivalent:
\begin{enumerate}
\item
\( H \) is a Lie group and \( H/K \) is a manifold;
\item
\( H/K \) is locally contractible.
\end{enumerate}
\end{theorem}

\begin{corollary}\label{cor:isometry-group-is-Lie-group}
Let \( K \) be a compact subgroup of an almost connected locally compact group \( H \) such that \( H/K \) is connected and \( \bigcap_{h \in H} hKh^{-1} = \{e\} \).
Suppose also that \( H/K \) is locally euclidean or that \( H \) is locally euclidean.
Then \( H \) and hence \( H/K \) may be given analytic structures, compatible with their topologies, such that \( H \) is a Lie group and the action of \( H \) on \( H/K \) is analytic.
\end{corollary}

\begin{proof}
If \( H/K \) is locally euclidean, then so is \( H \), by Theorem \ref{thm:Szenthe}, so we may assume that \( H \) is locally euclidean.

By Theorem \ref{thm:GIMYZ}, we may endow \( H \) with an analytic structure so that \( H \) becomes a Lie group, and this analytic structure on \( H \) induces an analytic structure on \( H/K \).
These analytic structures are compatible with the topologies of \( H \) and \( H/K \).
Further, \( H \) acts analytically on \( H/K \).
\end{proof}

In particular, if \( (M,d) \) is a homogeneous metric manifold, \( H \) is its isometry group, and \( K \) is the stabiliser of a point \( o \) in \( M \) in \( H \), then we may identify \( M \) with \( H/K \) and apply this corollary to deduce that \( H \) and \( M \) have analytic structures such that \( H \) acts analytically on \( M \).

In light of Theorems \ref{thm:GIMYZ} and \ref{thm:Szenthe} and Lemma \ref{lem:contractible} below, there are several criteria which ensure that \( H \) is locally euclidean or \( H/K \) is locally euclidean.

\begin{corollary}\label{cor:iso-of-two-manifolds-is-Lie-Riemannian}
Let \( (M_1, d_1) \) and \( (M_2, d_2) \) be homogeneous metric manifolds.
Then there exist analytic structures and left-invariant analytic infinitesimal riemannian metrics \( g_1 \) and \( g_2 \) on \( M_1 \) and \( M_2 \) such that
\begin{enumerate}
  \item \( \Iso(M_1, d_1) \subseteq \Iso(M_1, g_1) \) and \( \Iso(M_2, d_2) \subseteq \Iso(M_2, g_2) \); and
  \item each isometry \( f \) from \( (M_1, d_1) \) to \( (M_2, d_2) \) is also an isometry from \( (M_1, g_1) \) to \( (M_2, g_2) \).
\end{enumerate}
\end{corollary}

\begin{proof}
Write \( H_1 \) and \( H_2 \) for \( \Iso(M_1,d_1) \) and \( \Iso(M_2,d_2) \), and let \( K_1 \) and \( K_2 \) be the stabilisers in \( H_1 \) and \( H_2 \) of points \( o_1 \) in \( M_1 \) and \( o_2 \) in \( M_2 \); we may and shall identify \( M_1 \) and \( M_2 \) with \( H_1/K_1 \) and \( H_2/K_2 \).
By the previous result, \( H_1 \) and \( H_2 \) are Lie groups and act analytically on \( H_1/K_1 \) and \( H_2/K_2 \).

The action of \( K_1 \) on \( H_1/K_1 \) induces an action of \( K_1 \) on the tangent space to \( H_1/K_1 \) at the point \( K \).
Take an inner product on this tangent space; then by averaging over the action of \( K_1 \) using the Haar measure of \( K_1 \), we may assume that the inner product is \( K_1 \)-invariant.
We may extend this inner product to an analytic left-invariant infinitesimal riemannian metric \( g_1 \) on \( H_1/K_1 \); the key is that if \( h \) and \( h' \) in \( H_1 \) both map \( K_1 \) to \( hK_1 \), then \( h' = hk \) for some \( k \in K_1 \), and the \( K_1 \)-invariance of the inner product at the point \( K_1 \) implies that \( h \) and \( h' \) induce the same inner product at \( hK_1 \).
It follows immediately that \( H_1 \) acts on \( (H_1/K_1, g_1) \) by riemannian isometries, and we conclude that \( \Iso(M_1, d_1) \subseteq \Iso(H_1/K_1, g_1) \).

If there are no isometries from \( (M_1,d_1) \) to \( (M_2,d_2) \), we repeat this argument to put a riemannian metric on \( M_2 \), and there is nothing more to prove.

Otherwise, we take one isometry \( f \) from \( (M_1,d_1) \) to \( (M_2,d_2) \); we may and shall suppose that \( f(o_1) = o_2 \).
Conjugation with \( f \) induces a homeomorphic isomorphism \( F \) of the isometry groups \( \Iso(M_1,d_1) \) and \( \Iso(M_2,d_2) \), and \( F(K_1) = K_2 \).
Hence we may identify \( f \) with the map \( xK_1 \mapsto F(x)K_2 \) from \( H_1/K_1 \) to \( H_2/K_2 \).
The groups \( H_1 \) and \( H_2 \) are Lie groups, and continuous homomorphisms of Lie groups are automatically analytic, so \( F \) is analytic.

We transport the infinitesimal riemannian metric \( g_1 \) on \( H_1/K_1 \) to an infinitesimal riemannian metric \( g_2 \) on \( H_2/K_2 \), and then \( f \) is also an analytic riemannian isometry from \( (M_1, g_1) \) to \( (M_2, g_2) \); further, \( \Iso(M_2, d_2) \subseteq \Iso(M_2, g_2) \).

Finally, if \( f' \) is any isometry from \( (M_1,d_1) \) to \( (M_2,d_2) \), then \( f^{-1} \circ f' \in H_1 \).
It follows that \( f' \) is also a riemannian isometry from \( (M_1,g_1) \) to \( (M_2,g_2) \).
\end{proof}

This result was proved for metric Lie groups in \cite[Proposition 2.4]{Kivioja-LeDonne}.

\subsection{Compact subgroups}\label{ssec:comp-subgps}
We summarise some results about compact subgroups of connected locally compact groups, and establish some corollaries of the structure theorems above.

\begin{lemma}[After Iwasawa {\cite{Iwasawa}}]\label{lem:Iwasawa-max-cpct}
Let \( G \) be a connected locally compact group.
Then every compact subgroup of \( G \) is contained in a maximal compact subgroup \( K \) of \( G \), and all maximal compact subgroups are connected and conjugate to each other.
The subgroup \( K \) is a deformation retract of \( G \).

If \( N \) is a connected normal subgroup of \( G \) and \( K \) is a maximal compact subgroup of \( G \), then \( N \cap K \) is a maximal compact subgroup of \( N \) and \( KN/N \) is a maximal compact subgroup of \( G/N \); conversely, if \( K_N \) is a maximal compact subgroup of \( N \) and \( K_{G/N} \) is a maximal compact subgroup of \( G/N \), then  there exists a maximal compact subgroup \( K \) of \( G \) such that \( K \cap N =  K_N \) and \( KN/N = K_{G/N} \).
\end{lemma}

\begin{proof}
The first result is \cite[Theorem 13]{Iwasawa}, and the second is \cite[Lemma 4.10]{Iwasawa}.
In both cases, the results are first proved for Lie groups and then for groups that admit approximations by Lie groups, as in Theorem \ref{thm:GIMYZ}.
\end{proof}

It follows that the intersection of all maximal compact subgroups is the unique maximal compact normal subgroup of a connected locally compact group.

The following result is almost standard and may be extended (see \cite{Antonyan}); compact contractibility is the only new ingredient.
We say that a topological space \( M \) is \introd{compactly contractible} if, for each compact subset \( S \) of \( M \), there are \( x\in M \) and a continuous map \( F: [0, 1] \times S \to M \) such that \( F(0, s) =s \) and \( F(1, s) = x \) for all \( s \in S \).

\begin{lemma}\label{lem:contractible}
If \( K \) is a compact subgroup of a connected locally compact group \( H \), then the following are equivalent:
\begin{enumerate}
\item
\( K \) is a maximal compact subgroup of \( H \);
\item
\( H/K \) is homeomorphic to a euclidean space;
\item
\( H/K \) is contractible;
\item
\( H/K \) is compactly contractible.
\end{enumerate}
\end{lemma}

\begin{proof}
By \cite[page~188]{Montgomery-Zippin-TTG}, (i) implies (ii).
It is trivial that (ii) implies (iii) and (iii) implies (iv).
We prove that (iv) implies (i) by modifying the argument of \cite[Theorem 1.3]{Antonyan} that shows that (iii) implies (i).

Suppose that (iv) holds.
By \cite{Bagley-Peyrovian}, there is a maximal compact subgroup \( K_0 \) of \( H \) that contains \( K \), and then by \cite[page~188]{Montgomery-Zippin-TTG}, there is a map \( \Phi:\R^n\to H \) such that the map \( (x, y)\mapsto \Phi(x)y \) is a homeomorphism from \( \R^n\times K_0 \) to \( H \).
Hence \( H/K \) is homeomorphic to \( \R^n \times K_0/K \).
The contraction of the compact set \( K_0/K \) in \( H/K \) composed with the projection onto \( K_0/K \) is a contraction of \( K_0/K \).
From Antonyan~\cite{Antonyan}, \( K_0/K \) is contractible if and only if \( K = K_0 \), so \( K \) is maximal.
\end{proof}

\subsection{Proof of Theorem A}\label{ssec:main-1}

In this section, we prove our first main theorem, which we restate in more detailed form.

\begin{theorem}\label{thm:main-1}
Let \( (M,d) \) be a homogeneous metric space, and \( H \) be the connected component of the identity in \( \Iso(M,d) \).
\begin{enumerate}
\item
For all positive \( \epsilon \), there is a connected metric Lie group \( (H_\epsilon,d_\epsilon) \) and a \( (1, \epsilon) \)-quasi-isometry \( \phi:M \to H_\epsilon \).

\item
There are an \( H \)-invariant metric \( d_0 \) of \( M \), a contractible metric manifold \( (M',d') \) and an \( H \)-equivariant projection \( \pi \) from \( (M,d_0) \) to \( (M',d') \), such that the identity mapping is a homeomorphic rough isometry from \( (M,d) \) to \( (M,d_0) \), and \( \pi \) is a homogeneous metric projection with compact fibre, and hence a rough isometry.
\end{enumerate}
\end{theorem}

\begin{proof}
Let \( K_o \) be the stabiliser of a point \( o \) in \( M \), so that \( M \) may be identified with \( H/K_o \).

To prove part (a), take a compact normal subgroup \( N \) of \( H \) such that \( H/N \) is a Lie group and \( No \) has diameter less that \( \epsilon \).
Define
\[
\dot{d}(g,h) \coloneqq  \sup_{k \in N} d(gko, hko).
\]
By Lemma \ref{lem:G-Z-K-invariant-metric}, \( \dot{d} \) is a continuous admissible left-invariant and right-\( K_oN \)-invariant pseudometric on \( H \), and
\[
d(go,ho) \leq \dot{d}(g,h) \leq d(go,ho) + 2 \diam(No)
\qquad\forall g, h \in H.
\]
By the second part of Lemma \ref{lem:quotient-metric}, there is an admissible metric \( d' \) on \( M' \coloneqq  H/K_oN \) such that
\[
d'(gK_oN, hK_oN) = \dot d(g,h)
\qquad\forall g, h \in H.
\]
Hence \( (M,d) \) is \( (1,\epsilon) \)-quasi-isometric to the homogeneous metric manifold \( (M',d') \).
By Theorem \ref{thm:general-isometry-group}, \( (M',d') \) is itself \( (1,\epsilon) \)-quasi-isometric to the metric Lie group \( (H/N, d'_\epsilon) \).

The proof of part (b) is similar.
Let \( K \) be a maximal compact subgroup of \( H \) such that \( K_o \subseteq K \), whence \( K_oN \subseteq K \), and take \( M' \) to be \( G/K \).
As before, we lift the metric \( d \) on \( M \) to a pseudometric \( \dot d \) on \( H \) with kernel \( K_o \), using Lemma \ref{lem:pseudometrics}, and then using Lemma \ref{lem:G-Z-K-invariant-metric}, we define a left-invariant, right-\( K \)-invariant pseudometric \( \ddot d \) on \( H \) by
\[
\ddot d(g,h) \coloneqq  \max\{ \dot d(gk,hk) : h \in K \}.
\]
This then induces \( H \)-invariant metrics \( d_0 \) on \( G/K_o \) and \( d' \) on \( M' \coloneqq  G/K \) by Lemma \ref{lem:quotient-metric}, and the projection from \( G/K_o \) to \( G/K \) has the required properties by construction.
\end{proof}

\subsection{Lie groups and algebras}\label{ssec:lieth-1}
To say more about homogeneous metric spaces, we need more background on Lie theory; we review some aspects thereof in this section.
We begin with some standard definitions and results.

Recall that the \introd{adjoint group} of a Lie algebra \( \mathfrak{h} \) is the Lie group of linear transformations of \( \mathfrak{h} \) generated by the elements \( \exp(\ad(X)) \), where \( X \in \mathfrak{h} \).
Recall also that if \( H \) is a Lie group with Lie algebra \( \mathfrak{h} \), and \( \mathfrak{g} \) is a subalgebra of \( \mathfrak{h} \), then there is an \introd{analytic subgroup} (or Lie subgroup) \( G \) of \( H \) whose Lie algebra is \( \mathfrak{g} \), but \( G \) need not be closed.
Next, if \( G \) is an analytic subgroup of \( H \), then \( G \) with its own Lie structure is analytically immersed, but not necessarily embedded, in \( H \).
Of course, \( G \) is embedded if and only if it is closed.
In light of this correspondence between Lie groups and algebras, we denote the Lie algebra of a Lie group \( G \) by the corresponding fraktur letter \( \mathfrak{g} \).

We recall also that a discrete normal subgroup \( \Gamma \) of a connected Lie group \( G \) is central, since \( \{ x \in G : x\gamma x^{-1} = \gamma\} \) is both open and closed in \( G \) for each \( \gamma \in \Gamma \).
Hence if \( G \) is connected and \( \Gamma \) is a discrete central subgroup, then a discrete subgroup \( \Delta \) of \( G \) that contains \( \Gamma \) is central in \( G \) if and only if \( \Delta/\Gamma \) is central in \( G/\Gamma \).

Finally, we recall that the differential of a homomorphism \( \phi \) of Lie groups is a homomorphism of the Lie algebras, written \( \phi_* \).

\begin{definition}\label{def:torus}
A \emph{torus} or \emph{toral group} is a connected compact abelian Lie group, that is, a finite power of the multiplicative group of complex numbers of modulus \( 1 \).
A subalgebra \( \mathfrak{t} \) of a Lie algebra \( \mathfrak{h} \) is \emph{compact} if \( \ad(U) \) is semisimple and has purely imaginary eigenvalues on \( \mathfrak{h} \) for all \( U \in \mathfrak{t} \) and is \emph{toral} if it is abelian and compact.
The subgroup \( T \) corresponding to a compact subalgebra need not be compact, but \( \Ad(T) \) is a compact subgroup of \( \Aut(\mathfrak{h}) \), and is a torus if \( \mathfrak{t} \) is toral.
\end{definition}

If \( K \) is a compact subgroup of a connected Lie group \( H \), then \( \mathfrak{k} \) is a subalgebra of \( \mathfrak{h} \), and \( \ad(U) \) is semisimple and has purely imaginary eigenvalues for all \( U \in \mathfrak{k} \).
Indeed, by averaging an arbitrary inner product over \( K \), using the Haar measure, we may produce an \( \Ad(K) \)-invariant inner product on \( \mathfrak{h} \); then \( \Ad(K) \) is a group of orthogonal mappings of \( \mathfrak{h} \).
Hence if \( U \) in \( \mathfrak{k} \), then \( \exp(t\ad(U)) \) is semisimple with eigenvalues of modulus \( 1 \) for all \( t \in \R \), and \( \ad(U) \) is semisimple with purely imaginary eigenvalues.
If moreover \( K \) is a torus, then \( K \) is abelian and \( \mathfrak{k} \) is abelian; in this case we may simultaneously diagonalise \( \ad(K) \) acting on the complexification of \( \mathfrak{g} \).
(For information about complexifications of Lie algebras, see, for example, \cite[p.~47]{Varadarajan}.)

In general, the implicit use of an inner product to construct complements of subspaces that are invariant under the action of a compact group \( K \), or to decompose a space into a direct sum of minimal invariant subspaces, or to show that \( \ad(U) \) acts semisimply with purely imaginary eigenvalues for all \( U \) in its Lie algebra \( \mathfrak{k} \) will be referred to here as \introd{Weyl's unitarian trick}, though for Weyl this was just the starting point.
See \cite[p.~342]{Varadarajan} for more information.
Quite often the compact group \( K \) will be a torus, and we usually write \( T \) rather than \( K \) in this case.

Finally we recall that, if \( G \) is a Lie group with Lie algebra \( \mathfrak{g} \), then the \introd{radical} \( R \) of \( G \) is the maximal connected solvable normal subgroup of \( G \), while the \introd{nilradical} \( N \) is the maximal connected nilpotent normal subgroup of \( G \); both are closed.
Their Lie algebras \( \mathfrak{r} \) and \( \mathfrak{n} \) are the maximal solvable and nilpotent ideals of \( \mathfrak{g} \), also called the radical and nilradical or \( \mathfrak{g} \).
The existence of these ideals may be established by showing that the sum of nilpotent or solvable ideals is a nilpotent or solvable ideal respectively, whence the sum of all nilpotent or solvable ideals is the largest nilpotent or solvable ideal respectively; it may then be seen that \( \mathfrak{r} \) and \( \mathfrak{n} \) are characteristic ideals, in the sense that \( \phi(\mathfrak{r}) = \mathfrak{r} \) and \( \phi(\mathfrak{n}) = \mathfrak{n} \) for all automorphisms \( \phi \) of \( \mathfrak{g} \), which implies that they are normal in \( G \), whether or not \( G \) is connected.
Sometimes we write \( R = \rad(G) \) and \( N = \nil(G) \), or \( \mathfrak{r} = \rad(\mathfrak{g}) \) and \( \mathfrak{n} = \nil(\mathfrak{g}) \).

\begin{remark}\label{rem:derivations-radicals-1}
It is well-known that \( [\mathfrak{g}, \mathfrak{r}] \subseteq \mathfrak{n} \).
\end{remark}

For these results and much more, see Bourbaki \cite[pp.~44--47 and p.~354]{Bourbaki1-3} or Varadarajan \cite[pp.~204--207 and 244--245]{Varadarajan}.

We will need a structural result concerning tori in a connected Lie group \( H \); this illustrates the power of Lie theory in establishing results that are of interest in our study of homogeneous metric spaces.

\begin{lemma}\label{lem:central torus}
Let \( H \) be a Lie group with nilradical \( N \).
If \( T \) is a normal torus in \( H \), then \( T \subseteq N \).
If \( K \) is a maximal compact subgroup of \( N \), then \( K \) is a normal torus in \( H \), and central in the connected component of the identity in \( H \).
\end{lemma}

\begin{proof}

Let  \( \mathfrak{t} \) and \( \mathfrak{n} \) be the Lie algebras of  \( T \) and \( N \).
Since \( \mathfrak{t} \) and \( \mathfrak{n} \) are nilpotent ideals, so is \( \mathfrak{t} + \mathfrak{n} \), and since \( \mathfrak{n} \) is the maximal nilpotent ideal, \( \mathfrak{t} \subseteq \mathfrak{n} \), that is, \( T \subseteq N \).

We now take a maximal compact subgroup  \( K \) of \( N \), and show that \( K \) is normal in \( H \).
As \( K \) is also connected and nilpotent, \( K \) is a torus.

Let \( Z \) be the centre of \( N \), which is closed and connected \cite[Corollary 3.6.4]{Varadarajan}, and so of the form \( T \times V \), where \( T \) is a torus and \( V \) is a vector space.
Then \( T \) is the unique maximal compact subgroup of \( Z \).
Since \( KZ/Z \) is a compact subgroup of the simply connected nilpotent group \( N/Z \), whose only compact subgroup is trivial, \( K \subseteq Z \), and hence \( K = T \).
Now \( Z \) is characteristic in \( H \), in the sense that \( hZh^{-1} = Z \) for all \( h \in H \), and so \( hTh^{-1} \) is also a maximal compact subgroup of \( Z \); thus \( hTh^{-1} = T \).
It follows that \( K = T \) is normal in \( H \).

The  differential \( \phi_* \) of a continuous automorphism \( \phi \) of the torus \( T \) is a linear mapping of the Lie algebra \( \mathfrak{t} \) that preserves the lattice \( \Lambda \) of points \( U \) such that \( \exp(U) = e \); these mappings form a discrete subgroup of \( \mathrm{GL}(\mathfrak{t}) \).
Hence \( \Aut(T) \) is a discrete group.
The mapping \( h \mapsto ( t \mapsto hth^{-1}) \) is continuous from \( H \) to \( \Aut(T) \), and so the connected component of the identity in \( H \) lies in its kernel.
In other words, \( T \) is central in the connected component of the identity in \( H \).
\end{proof}

\subsection{Lie theory and metric spaces}\label{ssec:lieth-2}

We return to the situation that arises in the context of isometry groups.

The main result of this section, Corollary \ref{cor:isometry-isomorphism-Lie}, is an algebraic criterion for when a Lie group \( G_2 \) may be made isometric to a metric Lie group \( (G_1, d_1) \).

The material in this section is largely an extension to the case of more general metrics of ideas that go back many years to deal with riemannian Lie groups, which may be found in Helgason \cite{Helgason-DGLGSS} or Kobayashi and Nomizu \cite{Kobayashi-Nomizu-1, Kobayashi-Nomizu-2}.

\begin{lemma}\label{lem:Lie-algebra-criterion}
Suppose that \( K \) is a compact subgroup of a connected Lie group \( H \) and denote by \( \pi \) the quotient map from \( H \) to \( H/K \).
Let \( G \) be an analytic subgroup of \( H \) (not necessarily closed) such that
\( \mathfrak{h} = \mathfrak{g}\oplus\mathfrak{k} \) as vector spaces.
Then
\begin{enumerate}
\item \( H=GK \),
\item the map \( \pi|_G:G\to H/K \) is a covering map,
\item \( G \) is closed in \( H \) if and only if \( G\cap K \) is finite, and
\item if \( H/K \) is simply connected,  then \( H = G \cdot K \).
\end{enumerate}
\end{lemma}
\begin{proof}
The derivative of the mapping \( (X, Y) \mapsto \exp(X) \exp(Y) \) from \( \mathfrak{g} \oplus \mathfrak{k} \) to \( H \) is nonsingular at \( 0 \), whence \( H \), \( G \) and \( K \) satisfy the hypotheses of Lemma \ref{lem:H=alphaGK} (with \( \alpha \) taken to be the identity mapping).
Part (i) follows from Lemma \ref{lem:H=alphaGK} (i).

Lemma \ref{lem:H=alphaGK} (iii)  implies that \( \pi|_G: G\to H/K \) is a covering map, which proves (ii); part (iii) is just Lemma \ref{lem:H=alphaGK} (iv).

Finally, if \( H/K \) is simply connected, then the covering map \( \pi|_G \) is a homeomorphism, whence \( G\cap K=\{e_H\} \).
Part (iv) now follows from Lemma \ref{lem:H=alphaGK} (v).
\end{proof}

We remind the reader that when \( H = G \cdot K \), the spaces \( G \) and \( H/K \) are homeomorphic, and if \( H \) is connected, so is \( K \).

\begin{corollary}\label{cor:isometry-isomorphism-Lie}
Let \( {G_1} \) and \( {G_2} \) be connected simply connected Lie groups, let \( d_{1} \) be an admissible left-invariant metric on \( {G_1} \), let \( H \coloneqq \Iso({G_1},d_{1}) \), and let \( K \) be the stabiliser of \( e_{1} \) in \( H \).
The following are equivalent:
\begin{enumerate}
  \item \( {G_2} \) may be made isometric to \( ({G_1}, d_{1}) \);
  \item there is a Lie group monomorphism \( \alpha: G_2 \to H \) such that we may write \( H = G_1 \cdot K = \alpha(G_2) \cdot K \);
  \item there is a Lie algebra monomorphism \( \tau: \mathfrak{g}_2 \to \mathfrak{h} \) such that \( \tau(\mathfrak{g}_2) \oplus \mathfrak{k} = \mathfrak{h} \).
\end{enumerate}
\end{corollary}

\begin{proof}
This follows by combining Lemma \ref{lem:Lie-algebra-criterion} above with Theorem \ref{thm:isometry-isomorphism} (we may write \( H = G_2 \cdot K \)) and Corollary \ref{cor:isometry-group-is-Lie-group} (the isometry group of a metric Lie group is a Lie group).
\end{proof}

In the context of riemannian metrics, this result was well-known.

We conclude with the remark that, given a homeomorphism \( f: G_1 \to G_2 \) of Lie groups, there is a corresponding homeomorphism \( \tilde f: \tilde G_1 \to \tilde G_2 \) of their universal covering groups that induces an isomorphism of their fundamental groups.
Conversely, if \( f: \tilde G_1 \to \tilde G_2 \) is a homeomorphism that gives rise to an isomorphism of their fundamental groups, then \( G_1 \) and \( G_2 \) are homeomorphic.
By replacing homeomorphism with riemannian isometry, we see that Lie groups \( G_1 \) and \( G_2 \) may be made isometric if and only if \( \tilde G_1 \) and \( \tilde G_2 \) may be made isometric with an isometry that induces an isomorphism of the fundamental groups of \( G_1 \) and \( G_2 \).

\subsection{Decompositions of Lie groups}\label{ssec:lieth-3}

We are going to deal with semidirect products \( R \rtimes L \), and refer the reader to Definition \ref{def:various-products} for the details.
We shall also use the following nomenclature.

\begin{definition}
Suppose that \( \Gamma \) is a subgroup of the semidirect product \( R \rtimes L \).
We say that \( \Gamma \) is \introd{strongly central} if both \( (r,e) \) and \( (e,l) \) are central in \( R \rtimes L \) whenever \( (r,l) \in \Gamma \).
\end{definition}

It will be useful to recall some features of the \introd{Levi decomposition} of a connected Lie group \( G \).
Write \( \mathfrak{g} \) for the Lie algebra of \( G \).
The Lie algebra of the universal covering group \( \tilde G \) of \( G \) is also \( \mathfrak{g} \), and \( G \) is a quotient of \( \tilde G \) by a discrete central subgroup \( \Gamma \).
The Levi decomposition writes \( \mathfrak{g} \) as the sum \( \mathfrak{r} \oplus \mathfrak{l} \), where \( \mathfrak{r} \) is the radical and \( \mathfrak{l} \) is a semisimple subalgebra of \( \mathfrak{g} \), known as a Levi subalgebra.
While \( \mathfrak{r} \) is uniquely determined, \( \mathfrak{l} \) need not be, but all choices of \( \mathfrak{l} \) are conjugate under the adjoint group of \( \mathfrak{g} \).

Let \( \tilde R \) and \( \tilde L \) be the analytic subgroups of \( \tilde G \) and \( R \) and \( L \) be the analytic subgroups of \( G \) corresponding to \( \mathfrak{r} \) and \( \mathfrak{l} \); \( \tilde R \) and \( R \) are the radicals of \( \tilde G \) and \( G \), while \( \tilde L \) and \( L \) are called Levi subgroups.
The subgroup \( \tilde L \) is closed in \( \tilde G \) but \( L \) need not be closed in \( G \).
Denote \( \Gamma \cap \tilde R \) and \( \Gamma \cap \tilde L \) by \( \Gamma_R \) and \( \Gamma_L \).

The centre \( Z(\tilde L) \) of the simply connected semisimple group \( \tilde L \) is discrete and contains a finite index subgroup \( Z^+(\tilde L) \) which is the intersection of the kernels of all finite dimensional representations of \( \tilde L \); in particular, \( Z^+(\tilde{L}) \) is contained in the kernel of the restriction of the adjoint representation of \( \tilde{G} \) to \( \tilde{L} \), and hence \( Z^+(\tilde{L}) \subseteq Z(\tilde{G}) \cap \tilde{L} \).
Hence \( Z(\tilde{G}) \cap \tilde{L} \) is of finite index in \( Z(\tilde{L}) \).
Similarly we consider \( Z^+(L) \), the intersection of the kernels of all finite dimensional representations of \( L \), and show that \( Z(G) \cap L \) is of finite index in \( Z(L) \).
The subgroups \( Z^+(\tilde{L}) \) and \( Z^+(L) \) do not depend on the choice of \( \tilde{L} \) and \( L \) in the Levi decomposition, since all Levi subgroups are conjugate to each other.

The next lemma summarises many properties of the Levi decomposition.
These are certainly known, but we are not aware of a reference in which they may all be found in the one place.
Hence we hope that our formulation will prove useful.

\begin{lemma}\label{lem:Levi-decomp}
Let \( G \), \( Z(G) \), \( R \), \( L \), \( Z^+(L) \), \( \tilde G \), \( \tilde R \), \( \tilde L \), \( Z(\tilde L) \), \( Z^+(\tilde L) \), \( \Gamma \), \( \Gamma_R \) and \( \Gamma_L \) be as defined above.
Then the following hold.
\begin{enumerate}
  \item \( \tilde R \) and \( \tilde L \) are simply connected and closed in \( \tilde G \), and \( \tilde R \) is normal; further, \( \tilde G \) is the semidirect product \( \tilde R \rtimes \tilde L \) of these subgroups.
  \item \( \tilde R \) and \( \tilde L \) are the universal covering groups of \( R \) and \( L \), and \( R \) and \( L \) may be identified with \( \tilde R / \Gamma_R \) and \( \tilde L / \Gamma_L \).
  \item \( R \) is closed and normal in \( G \), but \( L \) need not be closed.  However, the subgroup \( Z^+(L)\afterbar  L \) is closed in \( G \).
  \item \( G \) may be identified with \( (R \rtimes L) / \Gamma_0 \), where \( \Gamma_0 = \Gamma /(\Gamma_R  \times \Gamma_L) \); moreover, \( | \Gamma_0| = | R \cap L| \).
  \item \( G \) is a semidirect product of its radical and a Levi subgroup if and only if \( R \cap L = \{e\} \) if and only if \( \Gamma_0 = \{e\} \) if and only if \( \Gamma = \Gamma_R \Gamma_L \).
  \item \( R \rtimes L \) is the smallest covering group of \( G \) that is a semidirect product of its radical and a Levi subgroup, in the sense that every covering group that is a semidirect product of its radical and a Levi subgroup also covers \( R \rtimes L \).
  \item \( L \) is closed in \( G \) if and only if \( \Gamma_0 L \) is closed in \( R \rtimes L \) if and only if the projection of \( \Gamma_0 \) onto \( R \) is closed in \( R \).
  \item \( \Gamma_0 \) has a largest strongly central subgroup \( \Gamma_1 \), whose index in \( \Gamma_0 \) is bounded by \( |Z(\tilde L) / Z^+(\tilde L)| \).
      We may identify \( G \) with \( (R \rtimes L / \Gamma_1) / (\Gamma_0/ \Gamma_1) \), which is a finite quotient.
  \item the subgroup \( R \cap L \) is discrete and central in \( L \), and so is finite if \( L \) has finite centre.
      The connected component of the identity in its closure \( (R \cap L)\bar{\phantom{x}} \)
      in \( G \) is central in \( G \).
      If \( \Gamma_0 \) is strongly central in \( R \rtimes L \), then \( R \cap L \) is
      central in \( G \).
\end{enumerate}
\end{lemma}
\begin{proof}
Item (i), the structure of \( \tilde G \), is well-known; see, for instance, \cite[p.~244]{Varadarajan}.
Item (ii) and the first part of item (iii) are also standard; we prove the second part of (iii) below.
Item (iv) is a consequence of a standard isomorphism theorem.
Items (v), (vi) and (vii) are trivial.

To prove item (viii), observe that if \( (r_0,l_0) \in \Gamma \) and \( (e,l_0) \) lies in the centre of \( \tilde G \), then so does \( (r_0,e) \).
We define \( \Gamma_1 \coloneqq  \{(r_0, l_0) \in \Gamma_0 : (e, l_0) \in Z(\tilde{G})\} \); then \( \Gamma_1 \) is a subgroup of \( \Gamma_0 \).

In the semisimple group \( \tilde L \), the set \( Z^+(\tilde L) \) of elements that lie in the kernel of every finite dimensional representation of \( \tilde L \) is a subgroup of finite index in the centre \( Z(\tilde L) \) of \( \tilde L \).
The index of \( \Gamma_1 \) in \( \Gamma_0 \) is bounded by \( Z(\tilde L)/Z^+(\tilde L) \).

Now we prove (ix).
Since \( \mathfrak{r} \cap \mathfrak{l} = \{0\} \) and \( R \) is closed and normal in \( G \), \( R \cap L \) is a closed normal zero-dimensional subgroup of \( L \), so it is discrete and central in \( L \), but it may not be closed in \( R \).
Obviously \( R \cap L \) is finite if \( L \) has finite centre (for instance, if \( L \) is compact).

As noted before the statement of this lemma, \( L \cap Z(G) \) is a subgroup of finite index of \( Z(L) \).
Hence \( R \cap L \cap Z(G) \) is of finite index in \( R \cap L \).
Thus the closures of \( R \cap L \cap Z(G) \) and of \( R \cap L \) in \( G \) have the same connected component of the identity, and the closure of \( R \cap L \cap Z(G) \) in \( G \) is of finite index in the closure of \( R \cap L \) in \( G \).
Since the closure of a central subgroup is central, the closure of \( R \cap L \cap Z(G) \) in \( G \) is central.
We conclude that the connected component of the identity in \( (R \cap L)\bar{\phantom{x}} \) is central, as required.

If moreover \( \Gamma_0 \) is strongly central in \( R \rtimes L \) and \( h \in R \cap L \), then both \( (h,e) \) and \( (e,h) \) in \( R \rtimes L \) map to \( h \) under the canonical quotient mapping, and so \( (h, h^{-1}) \in \Gamma_0 \), whence \( h \) is central in \( G \).

Finally, we prove the second part of (iii).
We repeat the above proofs for the quotient group \( G/Z^+(L)\afterbar  \).
The semisimple subgroup \( L' \) in the Levi decomposition \( R'L' \) of \( G/Z^+(L)\afterbar  \) is such that \( Z^+(L') \) is trivial, and hence \( Z(L') \) is finite, so that \( R' \cap L' \) is finite and \( L' \) is closed in \( G/Z^+(L)\afterbar  \), whence \( Z^+(L)\afterbar L \) is closed in \( G \).
\end{proof}

Note in particular that (iv) and (vii) of the lemma imply that if \( R \cap L \) is finite, then \( L \) is closed in \( G \), while if \( R \cap L \) is infinite, and \( L \) may or may not be closed.
Note also that every connected Lie group \( G \) has a covering group that is a semidirect product of its radical and a Levi subgroup, and the number of leaves in the cover is equal to the cardinality of \( R \cap L \), or equivalently, the cardinality of \( \Gamma_0 \).
By contrast, to obtain a quotient that is a semidirect product of its radical and a Levi subgroup, it may be necessary to factor out a subgroup of positive dimension: this is illustrated by the following example.

\begin{example}\label{ex:twist-not-semi-direct}
Consider the connected, simply connected Lie group \( \tilde{G} \) that is the semidirect product \( \C^n \rtimes (\mathrm{SU}(n) \times \R) \), where the action of \( \mathrm{SU}(n) \times \R \) on \( \C^n \) is given by \( \alpha(u,t) v = \expe^{it} uv  \).
The centre of this group may be identified with the subgroup of \( (\mathrm{SU}(n) \times \R) \) of elements \( (u,t) \) such that \( \expe^{it}u \) is the identity matrix.

The centre \( \Gamma \) of \( \tilde{G} \) is discrete but is not the product of the groups of central elements of the Levi subgroup \( L \) (which is \( \mathrm{SU}(n) \)) and of the central elements of the radical \( R \) (which is \( \C^n \rtimes \R \)); hence the group \( G \coloneqq  \tilde{G}/\Gamma \) has trivial centre and is not a semidirect product of the form \( R \rtimes L \), and has no quotient of the same dimension that is a semidirect product of its radical and a Levi factor.
The group \( \Gamma \) is central, but unless \( n=2 \), it is not strongly central, though by Lemma \ref{lem:Levi-decomp}, it has a subgroup \( \Gamma_1 \) of finite index that is strongly central.

\end{example}

We recall the \introd{Iwasawa decomposition} of a semisimple Lie algebra \( \mathfrak{l} \) and of a corresponding connected semisimple Lie group \( L \).
The Lie algebra \( \mathfrak{l} \) may always be decomposed as a direct sum of three subalgebras:
\[
\mathfrak{l} = \mathfrak{a} \oplus \mathfrak{n} \oplus \mathfrak{k},
\]
where \( \ad(X) \) is semisimple with real eigenvalues for all elements \( X \) in \( \mathfrak{a} \), is semisimple with purely imaginary eigenvalues for all elements \( X \) of \( \mathfrak{k} \), and is nilpotent for all elements \( X \) in \( \mathfrak{n} \).
Further, \( \mathfrak{a} \) is abelian and \( [ \mathfrak{a}, \mathfrak{n}] \subseteq \mathfrak{n} \), so \( \mathfrak{a} \oplus\mathfrak{n} \) is also a subalgebra.
The subalgebra \( \mathfrak{k} \) is in turn a direct sum \( \mathfrak{t} \oplus \mathfrak{k}' \), where \( \mathfrak{t} \), the centre of \( \mathfrak{k} \), is a toral subalgebra and \( \mathfrak{k}' \), the commutator subalgebra of \( \mathfrak{k} \), is a compact semisimple subalgebra.

The analytic subgroups \( A \) and \( N \) corresponding to \( \mathfrak{a} \) and \( \mathfrak{n} \) are closed in \( L \), and simply connected; further,  \( AN \) is solvable, closed, simply connected, and exponential, that is, the exponential mapping is a homeomorphism from \( \mathfrak{a}\oplus \mathfrak{n} \) to \( AN \).
The analytic subgroup \( K \) of \( L \) corresponding to \( \mathfrak{k} \) is also closed, and is a covering group of a compact Lie group; thus it may or may not be compact.
We may always write \( K \) as \( V \times K_c \), where \( V \) is a vector subgroup and \( K_c \) is a compact subgroup, and \( Z(L) \) is a discrete subgroup of \( K \).
The Iwasawa decomposition of \( L \) is the statement that
\begin{equation}\label{eq: Iwasawa-decomp}
L = A \cdot N \cdot K .
\end{equation}
All Iwasawa decompositions of \( L \) or of \( \mathfrak{l} \) are conjugate to each other by an inner automorphism of \( L \) or under the adjoint group of  \( \mathfrak{l} \).

\begin{remark}\label{rem:contractible-Lie-groups}

If \( L \) is a connected semisimple Lie group with Iwasawa decomposition \( A \cdot N \cdot K \), then \( K \) is a deformation refract of the semisimple Lie group \( L \), so \( L \) is contractible or simply connected if and only if \( K \) is.
From the classification of semisimple Lie groups (see, for instance, \cite[Chapter X]{Helgason-DGLGSS}), \( L \) is contractible if and only if it is a product of copies of the universal covering group of \( \mathrm{SL}(2,\R) \).
Other simple Lie groups have compact subgroups that are not contractible.

Thus if \( G \) is a contractible Lie group, then \( G = R \rtimes L \), where \( R \) is its radical and \( L \) a Levi subgroup; both \( R \) and \( L \) are contractible.
For connected solvable Lie groups, it is known that contractibility and simple connectedness coincide, while the contractible Levi factor is as just described.
\end{remark}

It is worth pointing out that, for a simply connected semisimple Lie group \( L \), the Lie algebra of \( V \) is \( \mathfrak{t} \) and that of \( K_c \) is \( \mathfrak{k}' \).
For a general semisimple Lie group \( L \), there is a projection \( \pi \) from its universal covering group \( \tilde{L} \) onto \( L \), and \( \pi(V) \) is the product of a torus (which is absorbed into \( K_c \)) and a vector subgroup of \( V \).

\begin{lemma}\label{lem:Iwasawa-and-Levi}
Let \( G \) be a connected Lie group, \( \mathfrak{r} \oplus \mathfrak{l} \) be a Levi decomposition of \( \mathfrak{g} \) and \( \mathfrak{a} \oplus \mathfrak{n} \oplus \mathfrak{k} \) be an Iwasawa decomposition of \( \mathfrak{l} \).
Let \( L \), \( AN \) and \( K \) be the analytic subgroups of \( G \) corresponding to \( \mathfrak{l} \), \( \mathfrak{a} \oplus \mathfrak{n} \) and \( \mathfrak{k} \).
Then \( AN \) is a closed, solvable, connected and simply connected subgroup of \( G \).
Further, \( Z(L) \) and \( Z^+(L) \) are subgroups of \( K \), \( \bar{K} = ZK \) and \( \bar{L} =  ZL \), where \( Z \) is the connected component of the identity in \( (Z^+(L))\afterbar \).
\end{lemma}

\begin{proof}
Let \( \pi_R \) be the canonical projection of \( G \) onto \( G/R \), where \( R \) is the radical of \( G \), which coincides with \( L^\flat \coloneqq  LR/R \simeq L/(R \cap L) \).
Let \( K^\flat \), \( A^\flat \) and \( N^\flat \) be the subgroups \( \pi_R(K) \), \( \pi_R(A) \) and \( \pi_R(N) \) of \( L^\flat \); then \( A^\flat \cdot N^\flat \cdot K^\flat \) is an Iwasawa decomposition of \( L^\flat \).
Thus \( AN \) and \( A^\flat N^\flat \) are simply connected exponential solvable Lie groups whose Lie algebras may be identified, and the restriction \( \pi_R|_{AN} \) of \( \pi_R \) to \( AN \) is a homeomorphic isomorphism onto \( A^\flat N^\flat \).

It follows immediately that \( AN \) is closed in \( G \).
Take \( a_j \in A \), \( n_j \in N \) such that \( a_j n_j \to g \in G \) as \( j \to \pinfty \); we must show that \( g \in AN \).
Now \( \pi_R(a_jn_j) \to \pi_R(g) \) in \( G/R \), and  the Iwasawa decomposition of \( L^\flat \) implies that there exist \( an \in AN \) such that \( \pi_R(g) = \pi_R(an) \).
The identification of \( AN \) and \( A^\flat N^\flat \) in the first paragraph of this proof implies that \( a_jn_j \to an \) in \( AN \) and hence \( a_jn_j \to an \) in \( G \).

We have already noted that \( Z(L) \) is a discrete subgroup of \( L \); \emph{a fortiori} \( Z^+(L) \) is a discrete subgroup of \( L \), central in \( G \).
The closure \( (Z^+(L))\afterbar \) is a central analytic subgroup of \( G \), and it is immediate that \( (Z^+(L))\afterbar = Z^+(L) Z \), where \( Z \) is the connected component of the identity in \( (Z^+(L))\afterbar \).
We have noted that \( K/Z^+(L) \) is compact, and so there is a compact subset \( S \) of \( K \) such that every element of \( K \) may be written as \( zs \) where \( z \in Z^+(L) \)  and  \( s \in S \).
It follows that
\[
\bar K \subseteq (Z^+(L))\afterbar S \subseteq Z^+(L) Z S = ZK;
\]
it is obvious that \( ZK \subseteq \bar K \), and so equality holds.

Finally to identify \( \bar L \), we see that if \( a_j \in A \), \( n_j \in N \), \( k_j \in K \), and \( a_j n_j k_j \to g \) in \( G \), then  \( \pi_R(a_jnj) \pi_R(k_j) \to \pi_R(g) \) in \( L^\flat \), whence \( \pi_R(a_jn_j) \to \pi_R(an) \) for some \( a \in A \) and \( n \in N \) from the properties of the Iwasawa decomposition of \( L^\flat \), and hence \( a_jn_j \to an \) from the identification of \( AN \) and \( A^\flat N^\flat \).
It is now immediate that \( k_j \) converges in \( G \) to some element of \( ZK \).
\end{proof}

Our next lemma links maximal compact subgroups to the Levi and Iwasawa decompositions.

\begin{lemma}\label{lem:Levi-and-max-cpct}
Suppose that \( G \) is a connected Lie group with radical \( R \).
Then the following hold.
\begin{enumerate}
  \item Given a Levi subgroup \( L \) with Iwasawa decomposition \( ANK \), there exists a maximal compact subgroup \( K_R \) of \( R \) such that \( K \) commutes with \( K_R \); if \( K \) is compact then \( K_R K \) is a maximal compact subgroup of \( G \).
  \item Given a maximal compact subgroup \( K_R' \) of \( R \), there exists a Levi subgroup \( L' \) of \( G \) with Iwasawa decomposition \( A'N'K' \)  such that \( K' \) commutes with \( K_R' \); if \( K' \) is compact then \( K_R' K' \) is a maximal compact subgroup of \( G \).
  \item Given a maximal compact subgroup \( K_0 \) of \( G \), there exists a Levi subgroup \( L \) of \( G \) with Iwasawa decomposition \( ANK \) such that \( K \) commutes with \( K_R \) and \( K_0 \subseteq K_R \bar{K} \), where \( K_R = K_0 \cap R \).
\end{enumerate}
\end{lemma}

\begin{proof}
To prove (i), take any Levi subgroup \( L \) of \( G \); then the group \( R \ltimes L \) is a covering group of \( G \) by Lemma \ref{lem:Levi-decomp}.
It is also a covering group of \( R \ltimes L/Z^+(L) \).
Hence \( G \) is locally isomorphic to \( R \ltimes L/Z^+(L) \).
Observe that two connected closed subgroups of \( G \) commute if and only if the two connected closed subgroups of \( R \ltimes L/Z^+(L) \) with the same Lie algebras commute.

Let \( ANK \) be an Iwasawa decomposition of \( L \); then \( AN(K/Z^+(L)) \) is an Iwasawa decomposition of \( L/Z^+(L) \), and \( K/Z^+(L) \) is a maximal compact subgroup of \( L/Z^+(L) \).
Extend \( K/Z^+(L) \) to a maximal compact subgroup \( K_m \) of \( R \ltimes L/Z^+(L) \).
Then \( K_R \coloneqq  K_m \cap R \) is a maximal compact subgroup of \( R \), and \( K_m R/R \), which is naturally isomorphic to \( K_m/K_R \), is a maximal compact subgroup of \( (R \ltimes L/Z^+(L))/R \), which is naturally isomorphic to \( L/Z^+(L) \).
Under this isomorphism, the image of \( K_mR/R \) is a maximal compact subgroup of \( L/Z^+(L) \) that contains \( K/Z^+(L) \), and hence these subgroups coincide.
Thus \( K_m = (K/Z^+(L)) K_R \), and \( K_R \) is a connected compact solvable normal subgroup of the connected compact Lie group \( K_m \), and hence is a central torus.
It follows that \( K/Z^+(L) \) and \( K_R \) commute, and hence \( K \) and \( K_R \) commute.

If \( K \) is compact, then \( K_RK \) is a compact subgroup of \( G \); further, \( K_RK \cap R \geq K_R \), but \( K_R \) is a maximal compact subgroup of \( R \) and so equality holds.
It follows that \( K_RK \) is a maximal compact subgroup of \( G \) from the fact that \( K_R \) and \( K \) are maximal compact subgroups of \( R \) and \( L \).

Now we prove (ii).
Given another maximal compact subgroup \( K_R' \) of \( R \), there exists \( r \in R \) such that \( K_R' = r K_Rr^{-1} \); then \( r Lr^{-1} \) is a Levi subgroup of \( G \) with Iwasawa decomposition \( r Ar^{-1}r Nr^{-1}r Kr^{-1} \) and \( K_R' \) commutes with \( r Kr^{-1} \), as required.

We prove part (iii) by induction on the dimension of \( R \), the radical of \( G \).
Suppose that the result holds whenever \( \dim(R) < r \), and suppose that \( \dim(R) = r \).
We consider two cases, according to the properties of \( Z^+(L) \).

If \( Z^+(L)\afterbar \) has dimension \( 0 \), then \( Z^+(L) \) is discrete in \( G \).
We write \( G^\flat  \) for \( G / Z^+(L) \) and consider the local isomorphism \( \pi: G  \to G^\flat  \).
Take a Levi decomposition \( R^\flat L^\flat \) of \( G^\flat \); then \( R^\flat \) and \( L^\flat \) are locally isomorphic to the subgroups \( R \) and \( L \) that arise in a Levi decomposition of \( G \), but \( Z^+(L^\flat ) = \{e\} \), which means that the subgroup \( K \) in an Iwasawa decomposition of \( L \) is compact.
It is evident \( \pi(K_0) \) is contained in a maximal compact subgroup of \( G^\flat \), and maximal compact subgroups of \( G^\flat \) are of the form \( K_R K^\flat \).
In this case, the desired result follows.

If \( Z^+(L)\afterbar \) has positive dimension, then \( Z \), the connected component of the identity in \( Z^+(L)\afterbar \), is a nontrivial closed connected normal subgroup of \( R \).
We let \( \pi: G \to G / Z \) be the canonical projection; the radical of the quotient group \( G / Z \) has dimension less than \( r \), while a Levi factor \( L^\flat \) of the quotient is locally isomorphic to a Levi factor of \( G \); the main difference is that \( Z^+(L^\flat) \) is trivial.
The result follows from the inductive hypothesis applied to \( G^\flat  \).
\end{proof}

We also need some information about maximal solvable subalgebras of a Lie algebra which follows from the Levi decomposition and an argument of Mostow.

\begin{lemma}[{After Mostow \cite{Mostow-subgroups}}] \label{lem:maximal-solvable}
Suppose that \( \mathfrak{h} \) is a Lie algebra.
There exist finitely many maximal solvable subalgebras \( \mathfrak{g}_j \) of \( \mathfrak{h} \) such that every maximal solvable subalgebra \( \mathfrak{g} \) is conjugate under the adjoint group to exactly one of the \( \mathfrak{g}_j \).
Exactly one of these subalgebras, \( \mathfrak{g}_0 \) say, has the property that there is a compact subalgebra \( \mathfrak{k} \) of \( \mathfrak{h} \) such that \( \mathfrak{g}_0 + \mathfrak{k} = \mathfrak{h} \).
\end{lemma}

\begin{proof}
Let \( \mathfrak{r} \) be the radical of \( \mathfrak{h} \) and \( \mathfrak{l} \) be a Levi subalgebra of \( \mathfrak{h} \), so that \( \mathfrak{h} = \mathfrak{r} \oplus \mathfrak{l} \).
Denote by \( \pi \) the canonical projection of \( \mathfrak{h} \) onto the quotient  \( \mathfrak{q} \coloneqq  \mathfrak{h}/\mathfrak{r} \), which may be identified with \( \mathfrak{l} \).

If \( \mathfrak{g} \) is a maximal solvable subalgebra of \( \mathfrak{h} \), then \( \mathfrak{r} \subseteq \mathfrak{g} \), since otherwise \( \mathfrak{g} + \mathfrak{r} \) would be a larger solvable subalgebra than \( \mathfrak{g} \).
Further, for subalgebras \( \mathfrak{g} \) of \( \mathfrak{h} \) that contain \( \mathfrak{r} \), \( \mathfrak{g} \) is solvable if and only if \( \pi(\mathfrak{g}) \) is solvable (this relies on that fact that if \( \mathfrak{s}_1 \) and \( \mathfrak{s}_2/\mathfrak{s}_1 \) are solvable, so is \( \mathfrak{s}_2 \)).
Consequently, \( \pi(\mathfrak{g}) \) is a maximal solvable subalgebra of \( \mathfrak{q} \) if and only if \( \mathfrak{g} \) is a maximal solvable subalgebra of \( \mathfrak{h} \).

Mostow \cite{Mostow-subgroups} classified the maximal solvable subalgebras of the semi\-simple Lie algebra \( \mathfrak{q} \) (showing that they correspond to Cartan subalgebras of \( \mathfrak{q} \)), and described finitely many maximal solvable subalgebras \( \mathfrak{s}_j \) of \( \mathfrak{q} \) with the property that every maximal solvable subalgebra is conjugate to exactly one of these.
The maximal solvable subalgebras \( \mathfrak{s} \) of \( \mathfrak{q} \) for which there exists a compact subalgebra \( \mathfrak{k} \) of \( \mathfrak{q} \) such that \( \mathfrak{s}+ \mathfrak{k} = \mathfrak{q} \) are all conjugates under the adjoint group of \( \mathfrak{q} \) of a particular subalgebra \( \mathfrak{s}_0 \), which is a toral extension of the subalgebra \( \mathfrak{a}+ \mathfrak{n} \) of \( \mathfrak{q} \) arising from an Iwasawa decomposition of \( \mathfrak{q} \).

We define \( \mathfrak{g}_j \coloneqq  \pi^{-1} (\mathfrak{s}_j) \) and \( \mathfrak{k}' \) to be the compact subalgebra of \( \mathfrak{l} \) that corresponds to \( \mathfrak{k} \) under the identification of \( \mathfrak{l} \) and \( \mathfrak{q} \).
Then \( \mathfrak{g}_j \) is a maximal solvable subalgebra of \( \mathfrak{h} \) (containing \( \mathfrak{r} \)), and every maximal solvable subalgebra of \( \mathfrak{h} \) is conjugate to one of these.
Further, \( \pi(\mathfrak{g}_0) + \pi(\mathfrak{k}') = \mathfrak{s}_0 + \pi(\mathfrak{k}') = \mathfrak{q} \), and \( \pi(\mathfrak{k}') \) is compact, whence \( \mathfrak{g}_0 + \mathfrak{k}' = \mathfrak{h} \).
If \( \mathfrak{s} \) is a maximal solvable subalgebra of \( \mathfrak{h} \) and \( \mathfrak{s} + \mathfrak{k}'' = \mathfrak{h} \) for some compact subalgebra \( \mathfrak{k}'' \) of \( \mathfrak{h} \), then \( \pi(\mathfrak{s}) \) is a maximal solvable subalgebra of \( \mathfrak{q} \) and \( \pi(\mathfrak{s}) + \pi(\mathfrak{k}'') = \mathfrak{q} \) for some compact subalgebra \( \pi(\mathfrak{k}'') \) of \( \mathfrak{q} \), whence \( \pi(\mathfrak{s}) \) is conjugate to \( \mathfrak{s}_0 \) under the adjoint group of \( \mathfrak{q} \) and hence \( \mathfrak{s} \) is conjugate to \( \mathfrak{g}_0 \).
\end{proof}

Suppose that \( H \) is a connected Lie group with centre \( Z(H) \).
The above result implies that there exist finitely many maximal connected solvable subgroups \( G_j \) of \( H \) such that every maximal solvable subgroup \( G \) is conjugate to exactly one of the \( G_j \).
Since the closure of a connected solvable group is connected and solvable, these maximal connected solvable subgroups are closed.
Exactly one of these subgroups, \( G_0 \) say, has the property that \( H/G_0Z(H) \) is compact.

While we are focussing on solvable Lie groups, we mention that for solvable groups, simply connected and contractible coincide.

We conclude this part with two results about compact semisimple Lie algebras that we will use later.

\begin{lemma}\label{lem:compact-semisimple-algebras}
Let \( \mathfrak{l} \) be a compact semisimple Lie algebra with a toral subalgebra \( \mathfrak{t} \) and a subalgebra \( \mathfrak{g} \) such that \( \mathfrak{l} = \mathfrak{t} + \mathfrak{g} \).
Then \( \mathfrak{l} = \mathfrak{g} \).
\end{lemma}

\begin{proof}
It suffices to show that \( \mathfrak{t} \subseteq \mathfrak{g} \).
By replacing \( \mathfrak{t} \) by a larger toral subalgebra if necessary, we may suppose without loss of generality that \( \mathfrak{t} \) is a maximal abelian subalgebra of \( \mathfrak{l} \).

It is well known (see, for instance, \cite[Section 4.3]{Varadarajan}) that the compact semisimple Lie algebra \( \mathfrak{l} \) with a maximal abelian subalgebra \( \mathfrak{t} \)  contains finitely many three-dimensional subalgebras \( \mathfrak{l}_\alpha \), each isomorphic to \( \mathfrak{su}(2) \), with a basis \( \{ X_\alpha, Y_\alpha, U_\alpha \} \), where \( U_\alpha \in \mathfrak{t} \), such that
\[
[X_\alpha, Y_\alpha] = U_\alpha, \qquad [Y_\alpha, U_\alpha] = X_\alpha \qquad\text{and}\qquad [U_\alpha, X_\alpha] = Y_\alpha.
\]
Further, the \( U_\alpha \) span \( \mathfrak{t} \), so it will suffice to show that each \( U_\alpha \in \mathfrak{g} \).

It is also well known (see, for instance, \cite[Section 4.3]{Varadarajan}) that \( \mathfrak{l} \) admits an inner product \( \left< \cdot, \cdot \right> \) (the negative of the Killing form), such that the orthogonal projection \( \pi_\alpha \) onto \( \Span\{ X_\alpha, Y_\alpha, U_\alpha\} \) has the property that
\[
\pi_\alpha V \in \R U_\alpha,
\qquad
[V, X_\alpha] = [\pi_\alpha V, X_\alpha]
\qquad\text{and}\qquad
[V, Y_\alpha] = [\pi_\alpha V, Y_\alpha]
\qquad\forall V \in \mathfrak{t}.
\]
Note that \( \left< U_\alpha, U_\alpha \right> \pi_\alpha V = \left< U_\alpha ,V \right> U_\alpha \) for all \( V \in \mathfrak{t} \).

Take a subalgebra \( \mathfrak{l}_\alpha \), and write \( X \), \( Y \), \( U \) and \( \pi \) instead of \( X_\alpha \), \( Y_\alpha \), \( U_\alpha \) and \( \pi_\alpha \) for ease of notation.
Since \( \mathfrak{l} = \mathfrak{g} + \mathfrak{t} \), there exist \( V, W \in \mathfrak{t} \) such that \( X - V \in \mathfrak{g} \) and \( Y - W \in \mathfrak{g} \).
Now
\[
\begin{aligned}
&   \left<U,U\right> [X-V, Y-W] \\
&\qquad= \left<U,U\right> ([X, Y]  - [V, Y] - [X, W] + [V, W]) \\
&\qquad= \left<U,U\right> ([X, Y]  - [\pi V, Y] - [X, \pi W] + [V, W]) \\
&\qquad= \left<U,U\right> U + \left<U, V\right> X + \left<U, W\right> Y \\
&\qquad= \left<U,U\right> U + \left<U, V\right> V  + \left<U, W\right> W + \left<U, V\right> (X-V) + \left<U, W\right> (Y-W) ,
\end{aligned}
\]
whence \( \left<U,U\right> U + \left<U, V\right> V + \left<U, W\right> W \in \mathfrak{g} \cap \mathfrak{t} \).
Now
\[
\left<U,U\right>[\left<U,U\right> U + \left<U, V\right> V + \left<U, W\right> W , X - V ]
= \left<U,U\right>^2 Y + \left<U, V\right>^2 Y + \left<U, W\right>^2 Y ,
\]
so \( Y \in \mathfrak{g} \), and
\[
\left<U,U\right>[\left<U,U\right> U + \left<U, V\right> V + \left<U, W\right> W , Y - W ]
= - \left<U,U\right>^2 X - \left<U, V\right>^2 X - \left<U, W\right>^2 X ,
\]
so \( X \in \mathfrak{g} \), whence \( U = [X,Y] \in \mathfrak{g} \) as required.
\end{proof}

\begin{lemma}\label{lem:Weyl-group}
Let \( \mathfrak{l} \) and \( \mathfrak{t} \) be the Lie algebras of a compact semisimple Lie group \( L \) and a maximal torus \( T \) thereof.
Then there exist \( w_1 \), \dots, \( w_J \) in \( L \) such that  \( \Ad(w_j) \mathfrak{t} = \mathfrak{t} \) and  \( \sum_{j=1}^{J}\Ad(w_j)U = 0 \) for all \( U \in \mathfrak{t} \).
\end{lemma}

We do not prove this lemma, but state only that the \( w_j \) are representatives of the finite group \( N(T)/T \), where \( N(T) \) is the normaliser of \( T \) in \( L \), that appear in the structure theory of compact Lie groups, and in particular, the Weyl group.
See, for instance, \cite[Sections 3.9 and 3.10]{Wallach} for much more on this.

\subsection{Polynomial growth and amenability}\label{ssec:polygr-amenability}

We now look at the structure of two particular types of Lie group in more detail.
If \( G \) is a connected Lie group, then it is said to be of polynomial growth if and only if the Haar measures of powers of sets \( U^n \) grow polynomially in \( n \).
This is equivalent to its Lie algebra \( \mathfrak{g} \) being of type (R), that is, the eigenvalues of \( \ad X \) are purely imaginary for each \( X\in \mathfrak{g} \).
For instance, nilpotent Lie groups and euclidean motion groups are of polynomial growth.
For more on this, see \cite{Guivarch-croissance, Jenkins-growth-main}.

\begin{lemma}\label{lem-poly-growth}
Let \( G \) be a connected Lie group with radical \( R \) and a Levi subgroup \( L \).
Then \( G \) is of polynomial growth if and only if \( R \) is of polynomial growth and \( L \) is compact.
If \( G \) is of polynomial growth, then maximal compact subgroups of \( G \) are tori if and only if \( G \) is solvable.
\end{lemma}

\begin{proof}
Both Guivarc'h \cite[p.~345]{Guivarch-croissance} and Jenkins \cite[p.~123]{Jenkins-growth-main} showed that Lie groups are of polynomial growth if and only if their radicals are of polynomial growth and their Levi subgroups are compact.
See also \cite[Theorem II.4.8]{Dungey-Elst-Robinson}.
Then a connected Lie group \( G \) of polynomial growth is a finite quotient of a group of the form \( R \rtimes L \), where \( R \) is solvable and \( L \) is a compact semisimple Lie group.

Let \( K \) be a maximal compact subgroup of \( G \); this is a connected compact Lie group.
Since \( K \) contains a subgroup that is locally isomorphic to \( L \), \( K \) is abelian if and only if \( G \) is solvable.
\end{proof}

Note that the universal covering group of \( \mathrm{SL}(2, \R) \) is contractible but not of polynomial growth.

\begin{definition}\label{def:amenable}
An almost connected Lie group \( G \) with Lie algebra \( \mathfrak{g} \) is said to be \introd{amenable} if each Levi subalgebra of \( \mathfrak{g} \) is compact.
\end{definition}

The standard definition of amenability of a group \( G \) involves the existence of a left-invariant mean on \( L^\infty(G) \).
The fact that for connected Lie groups this amounts to the definition above is well known (see, for instance, \cite[Corollary 4.1.9]{Zimmer}.
The extension to almost connected groups is straightforward.
It is also well known (and follows from the standard definition or from ours) that connected closed subgroups and quotients of amenable groups are amenable.

It is clear that connected Lie groups of polynomial growth are amenable, but examples such as the ``\( ax+b \)-group'', which is solvable but not of polynomial growth, show that the converse is false.

\begin{lemma}\label{lem:amenable-contractible}
Suppose that \( K \) is a maximal compact subgroup of a connected amenable Lie group \( H \), and that \( \bigcap_{h \in H} hKh^{-1} = \{e\} \).
Then there is a closed connected solvable normal subgroup \( G \) of \( H \) such that
\begin{enumerate}
  \item \( H = G \cdot K \), whence \( G \) acts simply transitively on \( H/K \);
  \item \( TG = G \) whenever \( T \) is an automorphism of \( H \) and \( TK = K \).
\end{enumerate}
\end{lemma}

\begin{proof}
Let \( N \) and \( R \) be the nilradical of and radical of \( H \); then \( N \subseteq R \).
Write \( H \) as \( RL \), where \( L \) is a necessarily compact Levi subgroup; in light of Lemma \ref{lem:Levi-and-max-cpct}, we may assume without loss of generality that \( L \subseteq K \).
The assumption on \( K \) implies that \( K \cap Z(H) = \{e\} \).
We write \( \mathfrak{n} \), \( \mathfrak{r} \) and so on for the Lie algebras of these groups; then \( \mathfrak{k} \cap Z(\mathfrak{h}) = \{0\} \).

We are going to use the \introd{Killing form}, a bilinear form on \( \mathfrak{h} \) defined by
\[
B(X,Y) \coloneqq  \trace( \ad(X) \ad(Y) )
\qquad\forall X, Y \in  \mathfrak{h}.
\]
This has many important properties, for which see, for instance, \cite[pp.~33--50]{Bourbaki1-3}; we will use the following:
\begin{enumerate}\renewcommand{\labelenumi}{\textrm{(\alph{enumi})}}
  \item if \( T_* \) is an automorphism of \( \mathfrak{h} \), then \( B(T_*X, T_*Y) = B(X,Y) \) for all \( X, Y \in \mathfrak{h} \);
  \item \( B( X, X ) < 0 \) for all \( X \in \mathfrak{k} \setminus \{0\} \) (because \( \mathfrak{k} \) is compact and \( \mathfrak{k} \cap Z(\mathfrak{h}) = \{0\} \));
  \item \( B( X, Y ) = 0 \) for all \( X \in \mathfrak{h} \) and all \( Y \in \mathfrak{n} \);
  \item \( B( [X, Y], Z) = 0 \) for all \( X \in \mathfrak{h} \) if and only if \( Z \in \mathfrak{r} \);
\end{enumerate}
We denote by \( \mathfrak{g} \) the subspace \( \{ X \in \mathfrak{h} : B(X,Y ) = 0 \ \forall Y \in \mathfrak{k} \} \).
Because \( \mathfrak{k} \) is semisimple, \( [\mathfrak{h}, \mathfrak{h}] \supseteq [\mathfrak{k}, \mathfrak{k}] = \mathfrak{k} \), and from (c) and (d) it follows that \( \mathfrak{n} \subseteq \mathfrak{g} \subseteq \mathfrak{r} \).
Then \( \mathfrak{g} \) is an ideal in \( \mathfrak{h} \), from Remark \ref{rem:derivations-radicals-1}, and \( \mathfrak{h} = \mathfrak{g} \oplus \mathfrak{k} \) from (b) and linear algebra.

Write \( K_R \) for \( K \cap R \), which is connected since it is a maximal compact subgroup of \( R \), and abelian, since it is both compact and solvable, and so is a torus; further, \( K_R \cap N = \{e\} \) by assumption and Lemma \ref{lem:central torus}.

Let \( T \) be an automorphism of \( H \) that fixes \( K \).
Then \( T_* \mathfrak{k} = \mathfrak{k} \), and from (a), \( T_* \mathfrak{g} = \mathfrak{g} \).

Since \( K \) is a maximal compact subgroup, \( H/K \) is simply connected, and Lemma \ref{lem:Lie-algebra-criterion} implies that the connected analytic subgroup \( G \) of \( R \) with Lie algebra \( \mathfrak{g} \) is closed, \( H = G \cdot K \), and \( G \) acts simply transitively on \( H / K \).
Further, if \( T \) is an automorphism of \( H \) and \( TK = K \), then its infinitesimal version \( T_* \) is an automorphism of \( \mathfrak{h} \) and \( T_*\mathfrak{k} = \mathfrak{k} \), whence \( T_* \mathfrak{g} = \mathfrak{g} \) and \( TG=G \).
\end{proof}

\begin{remark}\label{rem:amenable-implies-nice}
This lemma may be extended to more general connected Lie groups \( H \) at the cost of relaxing the requirement that \( G \) be normal.
However, the following example shows that no such result holds for all connected Lie groups.

Let \( H \) be the simply connected covering group of \( \mathrm{SU}(n,1) \), where \( n \geq 1 \), and \( K \) be a maximal compact subgroup.
Then \( H/K \) is contractible, but there is no solvable subgroup \( G \) of \( H \) that acts transitively on \( H/K \).
\end{remark}

\subsection{Proof of Theorem B}\label{ssec:proof-thm-main-2}
We are now ready to prove our next main theorem, which we restate, in a longer version.

\begin{theorem}\label{thm:main-2}
Let \( (M,d) \) be a homogeneous metric manifold.
Then there is a metric \( d' \) on \( M \) such that the identity mapping on \( M \) from \( (M,d) \) to \( (M, d') \) is a homeomorphic rough isometry, and {there is a transitive closed connected amenable subgroup \( H_\times \) of \( \Iso(M,d') \); hence \( M \) is homeomorphic to \( H_\times/K_\times \), where \( K_\times \) is a compact subgroup of \( H_\times \).}

If \( M \) is a metric Lie group, then we may take \( K_\times \) to be a finite group; if \( M \) is a simply connected metric Lie group, then we may take \( K_\times \) to be trivial.

If \( M \) is a contractible metric space, then we may take \( K_\times \) to be trivial and \( H_\times \) to be solvable, so that \( M \) is homeomorphically roughly isometric to a connected, simply connected solvable metric Lie group.
\end{theorem}

\begin{proof}
Let \( M \) be a homogeneous metric manifold, and suppose that \( H \) is a connected transitive isometry group of \( M \), so that we may identify \( M \) with \( H/K_o \), where \( K_o \) is the compact stabiliser of a point \( o \) in \( M \).
We may take \( H \) to be a Lie group that acts on \( M \) by analytic maps, by Theorem \ref{thm:Szenthe}.

We begin with a short outline of the proof.
Up to local isomorphism, the connected Lie group \( H \) is a semidirect product \( R \rtimes L \), where \( R \) is the solvable radical and \( L \) is a semisimple Levi subgroup.
Further, up to local isomorphism, \( L \) has an Iwasawa decomposition \( AN \cdot K \), where \( K \) is compact and \( AN \) is solvable.
Then \( H = S \cdot K \), where \( S \) is the closed solvable subgroup \( R \rtimes AN \) of \( H \).
If \( H \) has a left-invariant, right-\( K \)-invariant metric \( d \), then \( H \) is isometric to the group \( S \times K \), equipped with a left-invariant metric \( d_\times \), as described in Lemma \ref{lem:unwinding-product-groups}.
We need to deal with two additional complications: first, we need to deal with groups \( H \) that are not semidirect products, but quotients thereof, and second, we need to deal with the quotient \( H/K_o \).
Now we provide the details.

We recall from Lemma \ref{lem:Levi-decomp} that, in general, there is a continuous open projection \( \pi: R \rtimes L \to H \), with discrete kernel, \( \Gamma \) say, and \( \Gamma \) has a subgroup of finite index \( \Gamma_1 \) that is strongly central, that is, if \( (r,l) \in \Gamma_1 \), then both \( (r,e) \) and \( (e,l) \) are central in \( R \rtimes L \).
In particular, this implies that \( l \) lies in the subgroup \( K \) for any Iwasawa decomposition \( ANK \) of \( L \).

Now \( L \) has an Iwasawa decomposition (see \eqref{eq: Iwasawa-decomp}) \( ANK \), in which \( K = V \times K_c \), where \( V \) is a vector group which is compact modulo \( V \cap Z^+(L) \), and \( K_c \) is a maximal compact subgroup of \( L \), while \( AN \) is solvable; as above, we write \( S \) for the solvable group \( R \rtimes AN \), and then \( R \rtimes L = S \cdot (V \times K_c) \).
Let \( K_o \) be the stabiliser of a point \( o \) in \( M \) in \( H \), let \( K_m \) be a maximal compact subgroup of \( H \) that contains \( K_o \) and let \( K_R = K_m \cap R \).
Then \( K_o \subseteq (Z(L)^+)\afterbar K_R K_c \), by Lemmas \ref{lem:Levi-and-max-cpct} and \ref{lem:Iwasawa-and-Levi}.
The subgroup \( K_R K_c \times V \) of \( H \) is compact modulo the centre of \( H \), so that we may apply Corollary \ref{cor:left-G-right-K-invariant metric} to modify \( d \) and obtain a new admissible metric \( d' \) on \( M \), such that the identity map on \( M \) is a rough isometry from \( (M,d) \) to \( (M,d') \), and
\begin{equation}\label{eq:Pi}
d'(gg'kK_o, gg''kK_o) = d'(g'K_o, g''K_o)
\qquad\forall g,g',g'' \in H \quad\forall k \in K_R K_c V.
\end{equation}
For simplicity of notation, we replace \( d' \) by \( d \) and assume that \( d \) has the invariance property \eqref{eq:Pi}.

We define \( \omega: S \times (V \times K_c) \to S \cdot (V \times K_c) \) by
\[
\omega(s,k) \coloneqq  sk^{-1}
\qquad\forall s \in S \quad\forall k \in (V \times K_c).
\]
Then \( \omega \) is a homeomorphism.
We lift the metric \( d \) on the space \( H/K_o \), first to a pseudometric on \( H \) with kernel \( K_o \), and then to a pseudometric \( \dot d \) on the covering group \( S \cdot (V \times K_c) \) with kernel \( \pi^{-1} K_o \): more precisely, we define
\[
\dot d(x,y) \coloneqq  d(\pi x K_o, \pi y K_o)
\qquad\forall x,y \in S \cdot (V \times K_c).
\]
Now \( \dot d \) is continuous, admissible, left-invariant and right-\( \pi^{-1}(K_R K_c V) \)-invariant by construction.
By Lemma \ref{lem:unwinding-product-groups}, \( \dot d_\times \coloneqq  \dot d \circ (\omega \otimes \omega) \) is a continuous admissible left-invariant pseudometric on \( S \times (V \times K_c) \), whose kernel is a closed subgroup of \( S \times (V \times K_c) \), by Lemma \ref{lem:pseudometrics}.
When we identify points at \( \dot d_\times \)-distance \( 0 \), we obtain a \( S \times (V \times K_c) \)-invariant admissible metric \( d_\times \) on the quotient \( M_\times \).
Since the mapping \( \omega \) from \( S \times (V \times K_c) \) to \( S \cdot (V \times K_c) \) is an isometry of pseudometric spaces, the quotient metric space \( (M_\times, d_\times) \) is isometric to \( (M, d) \).

Trivially, the amenable Lie group \( S \times V \times K_c \) acts transitively and isometrically on \( (M_\times, d_\times) \), so there is a continuous homomorphism \( \alpha: S \times V \times K_c \to \Iso(M_\times, d_\times) \).
The image of \( \alpha \) is the product of the compact group \( \alpha(K_c) \) and the solvable group \( \alpha(S \times V) \), and so \( H_\times \), the closure of this image in \( \Iso(M_\times, d_\times) \), is the commuting product of the compact group \( \alpha(K_c) \) and the closed solvable group \( (\alpha(S \times V))\afterbar  \).
The intersection of these subgroups may be nontrivial, but \( H_\times \) is still amenable.
We may identify \( M_\times \) with the space \( H_\times/K_\times \), where \( K_\times \) is the compact stabiliser in \( H_\times \) of a point in \( M_\times \).

This proves the general part of the theorem.
However there are still some particular cases to consider.

First, if \( M \) is a metric group, then we may take \( K_o \) to be trivial, and trace through the argument above.
We see that \( M_\times \) is a finite quotient of an amenable metric group, and the order of the group that we factor out is bounded.
Indeed, in this case, \( M \) may be identified with the covering group \( R \rtimes L / \Gamma_0 \), and by Lemma \ref{lem:Levi-decomp}, \( \Gamma_0 \) has a subgroup \( \Gamma_1 \) of finite index such that \( \omega^{-1} \Gamma_1 \) is central and a fortiori normal in \( S \times (V \times K_c) \).
Then \( M_\times \) may be identified with \( (S \times V \times K_c) / \omega^{-1} \Gamma_0 \), which is a finite quotient of the amenable Lie group \( (S \times V \times K_c) / \omega^{-1} \Gamma_1 \).
If \( M \) is also simply connected, then no factoring out of discrete subgroups is involved, and we may identify \( M_\times \) with \( S \times V \times K_c \).

Another special case is when \( M \) is contractible.
In this case, \( M_\times \) is contractible, and so is of the form \( H_\times/K_\times \) where \( K_\times \) is a maximal compact subgroup of \( H_\times \), and \( H_\times \) is amenable.
Then there is a simply connected solvable group that acts simply transitively on \( M \) by Lemma \ref{lem:amenable-contractible}.
\end{proof}

\begin{remark}
We may choose the metric in Theorem \ref{thm:main-1}, in such a way that it is not necessary to change the metric at the beginning of this theorem.
Moreover, for any \( \epsilon \in \R^+ \), there is a homogeneous metric manifold of the form \( S \times K/K_0 \), where \( K_0 \) is a compact subgroup of \( S \times K \), that is \( (1, \epsilon) \)-quasi-isometric to the original space \( (M,d_0) \).
\end{remark}

Before we move on, we make a few observations.
It is evident that if we start with slightly different hypotheses, we may modify the argument of the proof above to prove slightly different results.
For example, if we start with a riemannian metric, we may work throughout with riemannian metrics and bi-Lipschitz mappings rather than general metrics and rough isometries.
Or if we start with a semisimple Lie group, we do not need to consider the Levi decomposition.
Or if we are allowed to choose the metrics, then we may do so to ensure that we have an isometry rather than a rough isometry.
By doing this, we easily obtain the following corollaries, which are really corollaries of the method of proof rather than of the result.

\begin{corollary}\label{cor:main-thm-2-Riemannian}
Let \( (M,d) \) be a homogeneous riemannian manifold.
Then there is a riemannian metric \( d' \) on \( M \) (so the identity mapping on \( M \) from \( (M,d) \) to \( (M, d') \) is bi-Lipschitz), such that \( (M,d') \) admits a transitive connected isometry group of the form \( S \times L \), where \( S \) is solvable and \( L \) is compact and semisimple; hence \( M \) is homeomorphic to  \( (S \times L)/K \), where \( K \) is a compact subgroup of \( S \times L \).

If \( M \) is a metric Lie group, then we may take \( K \) to be a finite group; if \( M \) is a simply connected metric Lie group, then we may take \( K \) to be trivial.

If \( M \) is a contractible metric space, then we may take \( L \) and \( K \) to be trivial, so that \( M \) is bi-Lipschitz to a connected, simply connected solvable metric Lie group.
\end{corollary}

\begin{corollary}\label{cor:contractible-Lie-groups}

Let \( G \) be a connected Lie group.
Then the following are equivalent:
\begin{enumerate}
  \item \( G \) may be made isometric to a connected simply connected solvable Lie group; and
  \item \( G = R \rtimes L \), where \( R \) is the solvable radical and \( L \) is a Levi subgroup of \( G \); further, \( R \) is simply connected  and \( L \) is a direct product of finitely many (possibly zero) copies of the universal covering group of \( \mathrm{SL}(2, \R) \).
\end{enumerate}
\end{corollary}

\begin{proof}

If (i) holds, then \( G \) is contractible; by Remark \ref{rem:contractible-Lie-groups}, (ii) holds.

On the other hand, if (ii) holds, then \( G \) may be made isomorphic to a solvable Lie group by Theorem \ref{thm:main-2}.
\end{proof}

\begin{corollary}\label{cor04111226}
Suppose that \( (G, d) \) is either a simply connected metric Lie group or a connected semisimple metric Lie group.
Then there exist a connected Lie group \( H \) that is the product of a solvable and a compact Lie group, and admissible left-invariant metrics \( d_G \) and \( d_H \) such that \( (G, d_G) \) and \( (H, d_H) \) are isometric and the identity map is a rough isometry from \( (G,d) \) to \( (G,d_G) \).
\end{corollary}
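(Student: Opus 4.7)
The plan is essentially to collect the pieces already assembled in Theorem~\ref{teo04231712} and the paragraph preceding the corollary, and to verify that the two hypotheses on $G$ both force the stabiliser in $H$ to be trivial. First I would invoke Theorem~\ref{teo04231712} applied to $G$, obtaining a connected Lie group $H=H_0\times K_0$ (with $H_0$ solvable and $K_0$ compact) acting analytically and transitively on $G$ with stabiliser of $e_G$ a finite group whose cardinality is bounded by $|R\cap L|$, together with an admissible left-invariant distance $d_G$ on $G$ such that $H$ acts on $(G,d_G)$ by isometries and the identity $(G,d)\to(G,d_G)$ is a $(1,C)$-quasi-isometry (this is parts (2) and (4) of the theorem).

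Next I would check that $R\cap L=\{e\}$ in both cases. If $G$ is simply connected, then the Levi decomposition takes the form of a semidirect product $G=R\rtimes L$, so the two factors intersect trivially. If $G$ is semisimple, then $R=\{e\}$, so the intersection is again trivial. In either case, the stabiliser of $e_G$ in $H$ is trivial, so the analytic covering map
\[
\psi\colon H\to G,\qquad \psi(h)\coloneqq h\ldot e_G,
\]
provided by Theorem~\ref{teo04231712}.(2) is a degree-one covering, hence a homeomorphism; since $H$ acts analytically, $\psi$ is in fact an analytic diffeomorphism.

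Then I would define $d_H$ on $H$ by pulling back $d_G$ along $\psi$, that is, $d_H(h_1,h_2)\coloneqq d_G(\psi(h_1),\psi(h_2))$. Because $\psi$ is a homeomorphism and $d_G$ is admissible, $d_H$ is admissible on $H$. To see that $d_H$ is left-invariant, I would use the fact that $H$ acts on $(G,d_G)$ by isometries: for $h,h_1,h_2\in H$,
\[
d_H(hh_1,hh_2)=d_G\bigl((hh_1)\ldot e_G,(hh_2)\ldot e_G\bigr)=d_G\bigl(h_1\ldot e_G,h_2\ldot e_G\bigr)=d_H(h_1,h_2),
\]
using that $h$ acts isometrically in the middle equality. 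By construction $\psi\colon (H,d_H)\to(G,d_G)$ is an isometry, so $(G,d_G)$ and $(H,d_H)$ are isometric, and the quasi-isometry statement for the identity on $G$ was already furnished by Theorem~\ref{teo04231712}.(4).

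There is essentially no serious obstacle here; the only point requiring care is verifying that the hypotheses ``simply connected'' and ``semisimple'' each force $R\cap L=\{e\}$, and that the pulled-back distance $d_H$ is genuinely left-invariant rather than merely right-invariant or invariant under some subgroup. Both are handled above via the Levi decomposition and the isometric action of $H$ on $(G,d_G)$, respectively.
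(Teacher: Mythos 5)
Your proposal is correct and follows essentially the same route as the paper: apply Theorem~\ref{teo04231712}, observe that $R\cap L=\{e\}$ in both the simply connected case (via $G=R\rtimes L$) and the semisimple case (via $R=\{e\}$) so that the stabiliser is trivial and the covering map $h\mapsto h\ldot e_G$ is a homeomorphism, and then pull back $d_G$ to $H$ and verify left-invariance using that $H$ acts on $(G,d_G)$ by isometries. No gaps.
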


\begin{corollary}\label{cor:allsemisimple_are_NAxK}
Let \( G \) be a connected semisimple Lie group with Iwasawa decomposition \( ANK \).
Write \( K \) as \( V\times K_0 \), where \( V \) is a vector group and \( K_0 \) is compact.
Then \( G \) may be made isometric to the direct product \( AN\times V\times K_0 \).
\end{corollary}

It seems reasonable to ask whether a general connected metric Lie group \( (G,d) \) is homeomorphically quasi-isometric to an amenable connected metric Lie group.
Example \ref{ex:twist-not-semi-direct} provides a counterexample.
Indeed, with the notation of that example, we consider the group \( G = \tilde{G}/\Gamma \), and observe that the arguments used to prove Theorem \ref{thm:main-2} show that \( \tilde{G} \) is homeomorphic to an amenable direct product group \( \tilde{G}^* \), and that \( \tilde{G}/\Gamma \) is isometric to \( \tilde{G}^*/\Gamma^* \), where \( \Gamma^* \) is the group \( \{ (r,l^{-1}) : (r,l) \in \Gamma \} \).
However, unless \( n=2 \), the subgroup \( \Gamma^* \) is not normal, but has a subgroup of finite index that is normal.
In this case, \( G^* / \Gamma^* \) is not a group but is a finite quotient of a group.
Further, \( \tilde{G}/\Gamma_1 \) is a finite covering group of \( G \), and is isometric to the group \( \tilde{G}^* / \Gamma_1^* \).
More generally, we state without proof the following variant of Theorem \ref{thm:main-2}.

\begin{theorem}
Let \( (M,d) \) be a homogeneous metric manifold.
Then there is a metric \( d' \) on \( M \) such that the identity mapping on \( M \) from \( (M,d) \) to \( (M, d') \) is a homeomorphic rough isometry, and \( (M,d') \) has a finite cover that admits a simply transitive connected isometry group of the form \( S \times L \), where \( S \) is solvable and \( L \) is compact and semisimple; hence \( M \) is homeomorphic to  \( (S \times L)/K \).
\end{theorem}

\subsection{Notes and remarks}\label{ssec:notes;Lie-thry}

\subsubsection*{3.1.\enspace The main structure theorem}
We have mentioned some of the contributors to the solution of Hilbert's fifth problem, on the structure of locally euclidean topological groups.
It is worth pointing out that earlier the structure of compact groups was elaborated by von Neumann, and that of solvable groups by Chevalley.
For much more, see \cite{Montgomery-Zippin-TTG}.

Apropos of Corollary \ref{cor:iso-of-two-manifolds-is-Lie-Riemannian}, riemannian geometers have known for a long time that spaces \( H/K \), where \( K \) is a compact subgroup of a connected Lie group \( H \), may be equipped with a riemannian metric such that \( H \) acts by isometries, by choosing a \( K_* \)-invariant infinitesimal metric at the point \( K \) of \( H/K \) and then translating this to the whole space.
For instance, this fact is described as well known in a 1954 paper of Nomizu \cite{Nomizu}.

\subsubsection*{3.2.\enspace Compact subgroups}

It is well known that connected compact Lie groups contain maximal connected abelian subgroups, or maximal tori, all of which are conjugate (see, for instance, \cite[Corollary 4.35, p.~255]{Knapp-Lie-Groups-Beyond}).
It is perhaps not so well known that all connected compact groups contain maximal connected abelian subgroups, which are automatically closed, and all of these are conjugate.
See \cite[Theorem 9.32]{Hofmann-Morris} for more details.

We have stated Corollaries \ref{cor:isometry-group-is-Lie-group} and \ref{cor:iso-of-two-manifolds-is-Lie-Riemannian} for connected groups for simplicity, and Lemma \ref{lem:Iwasawa-max-cpct} for connected groups since Iwasawa did so.
For the almost connected case, see \cite[Theorem 32.5]{Stroppel} and the references cited there.

For more classical theory of the topology of Lie groups, see \cite{Samelson}.

\subsubsection*{3.3.\enspace Proof of Theorem \ref{thm:main-1}}
Let \( o \) be a point in a homogeneous metric space \( (M,d) \).
Then there is a connected locally compact group \( H \) that acts effectively and transitively on \( (M,d) \) by isometries, and \( M \) may be identified with the space \( H/K_o \), where \( K_o \) is the stabiliser of \( o \) in \( H \).
Let \( K \) be a maximal compact subgroup of \( H \) that contains \( K_o \), and suppose that \( d \) is right-\( K \)-invariant, which may always be arranged as in the proof of the theorem.

Then the collection of compact subgroups \( K_\nu \) of \( H \) such that \( K_o \subseteq K_\nu \subseteq K \) is a partially ordered set, and in the corresponding collection of quotient spaces \( H/K_\nu \), and by extending the construction following Definition \ref{def:metric-projection}, we may find a family of homogeneous metric space projections \( \pi_{\nu,\nu'}: H/K_\nu \to H/K_\nu' \) whenever \( K_\nu \subseteq K_\nu' \), and the implicit constants in all these projections are uniformly bounded.
This family of projections is an inverse system, and \( H/K_o \) is (trivially) the limit of spaces \( H/K_\nu \) as \( K_\nu \) shrinks down to \( K_o \).
If we restrict to the subgroups \( K_\nu \) such that \( H/K_\nu \) is a Lie group, then the limit is no longer trivial if \( H/K_o \) is not a manifold.

When the spaces \( H/K_\nu \) and \( H/K_\nu' \) are manifolds, then \( H/K_\nu \) is a fibre bundle over \( H/K_\nu' \).
However, in general, we cannot assert this: local triviality is a problem.

\subsubsection*{3.4.\enspace Lie groups}
Apropos of the exponential mapping on a Lie group, it may be of interest that in some cases, \( G = \exp(\mathfrak{g}) \), while  \( G = \exp(\mathfrak{g}) \exp(\mathfrak{g}) \) always (see \cite{Mosk-Sack-2003}).
When the exponential mapping is surjective, we can relate the eigenvalues of the adjoint action \( \ad \) to those of \( \Ad \), but in general matters are somewhat murky.

\subsubsection*{3.6.\enspace Decompositions of Lie groups}

Under suitable conditions, a connected locally compact group \( H \) has a connected simply connected locally compact universal covering group \( \tilde H \) (an infinite dimensional torus is a counter-example).
We refer the reader to \cite{BerestovskiiPlaut} for more information.
Thus it would be possible to extend the Levi decomposition to more general locally compact connected groups, but to discuss this would take us too far from our main goals.

We give two more examples that illustrate what may happen in the Levi decomposition when \( L \) is not closed.
Let \( U \) denote the universal covering group of \( \mathrm{SL}(2,\R) \),
and \( \{k_t: t \in \R\} \) be the one-parameter subgroup of \( U \) that projects down to the standard rotation subgroup of \( \mathrm{SL}(2,\R) \), parametrised so that \( k_t \) projects to the rotation through an angle \( t \); thus the elements \( k_{2\pi n} \), where \( n \in \Z \), project to the identity of \( \mathrm{SL}(2,\R) \).

\begin{example}
Let \( G \) be the group \( (U \times T) / Z \), where \( T = \{ z \in \C : |z| = 1\} \) and \( Z \) is the central discrete subgroup \( \{ (k_{2\pi n}, \expe^{in} ) : n \in \Z\} \) of \( U \times T \).
The Levi subgroup of \( G \) is an analytic subgroup, which may be identified with \( U \), and the radical is a torus, which may be identified with \( T \); these have an intersection which is dense in the radical.
This group cannot be written as a semidirect product of its radical and a Levi factor, and nor can any finite covering group or finite quotient, though a compact quotient of lower dimension is trivially a semidirect product of its radical and a Levi factor.
\end{example}

\begin{example}
Let \( G \) be the group \( (U \times U \times \R)/Z \) and \( Z \) be the central discrete subgroup \( \{ (k_{2\pi m}, k_{2\pi n}, m+\alpha n ) : m,n \in \Z\} \), where \( \alpha \) is irrational.
Then the Levi subgroup of \( G \) is an analytic subgroup, which may be identified with \( U \times U \), and the radical is a line; these have an intersection which is dense in the radical.
This group cannot be written as a semidirect product of its radical and a Levi subgroup, and nor can any finite covering group or compact quotient.
\end{example}

\subsubsection*{3.7.\enspace Polynomial growth and amenability}

A propos of Definition \ref{def:amenable}, the term was apparently coined by M.M. Day, to indicate the existence of a left-invariant mean on a group.
For us, amenable groups are amenable because they are much more tractable than general Lie groups.

\subsubsection*{3.8.\enspace Proof of Theorem B}
{
Theorem \ref{thm:main-2} shows that the class of solvable Lie groups is not closed under isometries.
It was already known (see \cite{Agrachev-Barilari, Milnor-note}) that the infinite covering group of \( \mathrm{SL}(2,\R) \) and the direct product of \( \R \) and the ``\( ax+b \)-group'' may be made isometric, even though the former group is not solvable and the latter is.
}

We remark that rough isometry is connected to Cornulier's  \cite{Cornulier_commability} notion of \emph{commability}; two homogeneous spaces are \emph{commable} if they may be connected by a finite number of projections from a group \( G \) onto a quotient \( G/K \), where \( K \) is a compact subgroup of \( G \), and cocompact embeddings; the arguments above show that \( G \) and \( G/K \) may be metrised (subject to some topological separability) in such a way that the projection and section are rough isometries.
But when we allow metrics that are not {proper quasigeodesic}, then rough isometry need not imply commability.
For instance, infinite covering projections may be rough isometries, by Lemma \ref{lem:covering-space}, but a space and its infinite cover are not commable.

Finally, it may be useful to recall that there is significant literature showing that the topology alone comes close to determining compact Lie groups; see \cite{Hubbuck-Kane} and the works cited there.
On the other hand, relations such as \( (L,C) \)-quasi-isometry do not ``see'' compact factors at all if \( C \) is sufficiently large.

\section{Solvable Lie groups}\label{sec:solvable}

In the last chapter, in Section \ref{ssec:main-1}, we showed that homogeneous metric spaces are roughly isometric to connected solvable Lie groups.
In this chapter, we restrict our attention to such groups.
We discuss the classification of connected solvable Lie groups up to isometry, due to Gordon and Wilson \cite{Gordon-Wilson-fine, Gordon-Wilson-solvmanifolds} in the riemannian case; we extend their results to cover more general metrics.
We consider when two such groups may be made isometric, and make some small contributions to the problem of their classification up to quasi-isometry, which has not yet been achieved and seems to be a very difficult question.
We present a different point of view to previous authors and extend some existing definitions and results.

Before we describe our results in more detail, we remind the reader of Definition \ref{def:various-products}: \( H = G \cdot K \) means that \( G \) and \( K \) are subgroups of \( H \) and the map \( (g,k) \mapsto gk \) is a homeomorphism from \( G \times K \) to \( H \).
Also, we write \( Z(H) \) and \( Z(\mathfrak{h}) \) for the centres of \( H \) and \( \mathfrak{h} \).

Suppose that \( (G,d) \) is a connected solvable metric Lie group, and that \( H \) is a connected closed subgroup of the Lie group \( \Iso(G,d) \) that contains \( G \), acting on itself by left translations.
Let \( K \) be the stabiliser in \( H \) of the point \( e \) in \( G \).
Then \( H = G \cdot K \), by Remark \ref{rem:metric-groups-give-products}, and \( Z(H) \cap K = \{e\} \), by Remark \ref{rem:no-compact-normal-subgroups}.
If moreover \( H \) is connected and solvable, then \( K \) is connected, compact and solvable, and hence a torus; in this case, we usually write \( T \) instead of \( K \).

Up to now, we have been looking at homogeneous metric spaces of the form \( H/K \), where \( H \) is a connected group and \( K \) is a compact subgroup.
For example, we showed in Corollary \ref{cor:isometric-groups-complementary-subgroups}  that if \( G_1 \) and \( G_2 \) are connected groups that both act simply transitively by isometries on a homogeneous metric space \( (M,d) \), and \( H \) is the connected component of the identity in \( \Iso(M,d) \) and \( K \) is the stabiliser in \( H \) of a point in \( M \), then it is possible to write \( H = G_1 \cdot K = G_2 \cdot K \).
However, this does not tell us whether \( G_1 \) and \( G_2 \) are algebraically similar.
In this chapter, we use the additional information available from Lie theory to discuss when two connected solvable Lie groups are isometric, or may be made isometric, or even when they are roughly isometric (and here there are still many open problems).
The first main step in doing this is to show that if \( G_1 \) and \( G_2 \) are isometric connected solvable metric Lie groups, then there is a connected, solvable metric Lie group \( H \) and a toral subgroup \( T \) such that \( H = G_1 \cdot T = G_2 \cdot T \).
We do this by appealing to a now classical theorem of Mostow \cite{Mostow-subgroups}.
Then we proceed to a detailed analysis of solvable Lie groups and their subgroups.

In Section \ref{ssec:derivations}, we examine derivations of Lie algebras, and particularly solvable Lie algebras, in detail; in Section \ref{ssec:nilpotent}, we recall the definitions of the upper and lower central series and properties of the exponential mapping from nilpotent Lie algebras to nilpotent Lie groups;
and in Section \ref{ssec:modifications}, we discuss the modifications of Gordon and Wilson \cite{Gordon-Wilson-fine, Gordon-Wilson-solvmanifolds}.
In Section \ref{ssec:twists}, we briefly describe ``twisted versions'' of solvable Lie groups, and show that two isometric connected solvable groups are both twisted versions of the same connected solvable group.
We connect twisted versions of groups to the normal modifications of Gordon and Wilson \cite{Gordon-Wilson-fine, Gordon-Wilson-solvmanifolds}, and to \emph{hulls} and \emph{real-shadows} of solvable groups in Section \ref{ssec:hull-shadow}.
In Section \ref{ssec:consequences-thm-3}, we prove Theorem C and a number of consequences.
Much of what we do, or at least something similar, is known; we leave a brief description of the history of this development to Section \ref{sec:history}.

We end this introductory discussion with a  lemma and a remark.

\begin{lemma}\label{lem:existence-of-G0-1}
Suppose that \( H \) is a connected solvable Lie group, and \( T \) is a toral subgroup of \( H \) such that \( Z(H) \cap T = \{ e \} \).
Then there exists a closed connected subgroup \( G_0 \) of \( H \) such that \( H = G_0 \rtimes T \).
\end{lemma}

\begin{proof}
Let \( N \) be the nilradical of \( H \).
Then \( T \cap N = \{e\} \) by Lemma \ref{lem:central torus}, whence \( TN/T \) is a torus isomorphic to \( T \) in the connected abelian Lie group \( H/N \).
Connected abelian Lie groups are all products of tori and vector groups, and their structure is easy to understand.
In particular, there is a closed subgroup \( C \) of \( H/N \) such that \( H/N = C \cdot (TN/N) \); let \( G_0 \) be the closed subgroup of \( H \) containing \( N \) such that \( G_0/N = C \).
Then \( G_0/N \) is normal in \( H/N \), so \( G_0 \) is normal in \( H \).
Since \( T \cap N = \{e\} \), as noted above, \( H = G_0 \cdot T \).
\end{proof}

\begin{remark}\label{rem:central-tori}
Let \( (M,d) \) be a metric solvmanifold, that is, a homogeneous metric manifold such that the Lie group \( \Iso(M,d) \) contains a closed solvable subgroup \( H \) that acts transitively on \( M \).
Let \( K \) be the stabiliser in \( H \) of a point in \( M \); then \( K \) is a compact subgroup of \( H \), and \( M \) is homeomorphic to \( H/K \).
The space \( H/K \) has a finite covering space \( H/T\), where \( T \) is the connected component of the identity in \( K \), which is a torus, being compact, connected and solvable.
Further, since \( H \) is an isometry group, \( Z(H) \cap T = \{e\} \).
Application of Lemma \ref{lem:existence-of-G0-1} to \( H \) and \( T \) produces a subgroup \( G_0 \) that acts simply transitively on \( H/T \), and ``almost simply transitively'' on \( H/K \).
\end{remark}

\subsection{Derivations and automorphisms}\label{ssec:derivations}
Here we prove some preliminary results about derivations and introduce a little more notation.

\begin{remark}\label{rem:derivations-roots}
Suppose that \( L \) is a diagonalisable linear map on a Lie algebra \( \mathfrak{g} \); then there is a direct sum eigenspace decomposition \( \mathfrak{g} = \sum_{\lambda}  \mathfrak{g}_\lambda \), where \( LX = \lambda X \) for all \( X \in \mathfrak{g}_\lambda \).
It is well known that \( L \) is a derivation if and only if \( [ \mathfrak{g}_\alpha, \mathfrak{g}_\beta] \subseteq  \mathfrak{g}_{\alpha+\beta} \) for all eigenvalues \( \alpha \) and \( \beta \).
Indeed, if \( X \in \mathfrak{g}_\alpha \) and \( Y \in \mathfrak{g}_\beta \), then
\begin{equation*}
\begin{aligned}
[LX,Y] + [X,LY] & = (\alpha+\beta) [X,Y],
\end{aligned}
\end{equation*}
so if \( L \) is a derivation, then \( [X,Y] \in \mathfrak{g}_{\alpha+\beta} \).
Conversely if \( [X,Y] \in \mathfrak{g}_{\alpha+\beta} \) for all \( X\in \mathfrak{g}_\alpha \) and \( Y\in \mathfrak{g}_\beta \) and all eigenvalues \( \alpha \) and \( \beta \), then the linearity of \( L \) implies that \( L[X,Y] = [LX,Y] + [X,LY] \) for all \( X,Y \in \mathfrak{g} \) and \( L \) is a derivation.
\end{remark}

\begin{remark}\label{rem:derivations-radicals}
If \( D \) is any derivation of a Lie algebra \( \mathfrak{g} \), then
\( D \rad(\mathfrak{g}) \subseteq \nil(\mathfrak{g}) \),
by \cite[Theorem 7, p.~74]{Jacobson}.
In particular, \( [\mathfrak{g} , \rad(\mathfrak{g})] \subseteq \nil(\mathfrak{g}) \).
This implies that if \( \mathfrak{v} \) is a subspace of \( \mathfrak{g} \) and \( \nil(\mathfrak{g}) \subseteq \mathfrak{v} \subseteq  \rad(\mathfrak{g}) \), then \( \mathfrak{v} \) is an ideal in \( \mathfrak{g} \).
\end{remark}

The next lemma is certainly known, but we are not aware of a proof in the literature, so we provide one.

\begin{lemma}\label{lem:shadow-step-1}
Suppose that \( \mathfrak{g} \) is a real Lie algebra, and that \( \mathfrak{d} \) is an abelian algebra of semisimple derivations of \( \mathfrak{g} \).
Then there are commuting abelian algebras \( \mathfrak{d}_{\mathrm{r}} \) and \( \mathfrak{d}_{\mathrm{i}} \) of semisimple derivations of \( \mathfrak{g} \) such that every element of \( \mathfrak{d}_{\mathrm{r}} \) has purely real eigenvalues, every element of \( \mathfrak{d}_{\mathrm{i}} \) has purely imaginary eigenvalues, and every element \( D \) of \( \mathfrak{d} \) may be written as a sum \( D = D_{\mathrm{r}} + D_{\mathrm{i}} \), where \( D_{\mathrm{r}} \in \mathfrak{d}_{\mathrm{r}} \) and \( D_{\mathrm{i}} \in \mathfrak{d}_{\mathrm{i}} \).
\end{lemma}

\begin{proof}
By considering the simultaneous eigenvalue decomposition of \( \mathfrak{g} \) under the action of \( \mathfrak{d} \), we may write the complexification \( \mathfrak{g}_{\C} \) as a ``sum of root spaces'' \( \sum_{\alpha \in \Phi} \mathfrak{g}_{\alpha} \), where \( \Phi \) is a finite set of linear mappings from \( \mathfrak{g}_{\C} \) to \( \C \) and \( \mathfrak{g}_\alpha \) is the subspace of all \( X \in \mathfrak{g}_{\C} \) such that  \( DX = \alpha(D) X \) for all \( D \in \mathfrak{d} \).
We write \( \mathfrak{g}_{\gamma} = \{0\} \) if \( \gamma \notin \Phi \).

Define the linear mapping \( \bar D  \) on \( \mathfrak{g}_{\C} \) by requiring that \( \bar D X \coloneqq  \bar\alpha(D)X \) for all \( X \in \mathfrak{g}_{\alpha} \) and all \( \alpha \in \Phi \).
Since \( (\alpha(D) + \beta(D))\bar{\phantom{x}} = \bar\alpha(D) + \bar\beta(D) \), Remark \ref{rem:derivations-roots} implies that \( \bar D \) is a derivation.
Further, \( D + \bar D \) has real eigenvalues while \( D - \bar D \) has purely imaginary eigenvalues.
It remains to show that \( \bar D \) restricts to a linear mapping of \( \mathfrak{g} \).

By linear algebra, \( \mathfrak{g} \) has a basis
\[
\{ X_j, Y_j, W_k : j \in \{1, \dots, J\}, k \in \{ 1, \dots, K \}\}
\]
such that the subspaces \( \Span \{X_j, Y_j \} \) and \( \Span\{W_k\} \) are irreducible and invariant for \( \mathfrak{d} \).
In the complexification \( \mathfrak{g}_{\C} \), each \( D \in \mathfrak{d} \)  is diagonalised in the basis
\[
\{ X_j + i Y_j, X_j - i Y_j, W_k : j \in \{1, \dots, J\}, k \in \{ 1, \dots, K \} \},
\]
with eigenvalues \( \lambda_j \) and \( \bar\lambda_j \) and \( \mu_k \), say; here the \( \lambda_j \) are strictly complex while the \( \mu_k \) are real.
By definition, \( \bar D (X_j + i Y_j) = \bar\lambda_j(X_j + i Y_j) \) and \( \bar D (X_j - i Y_j) = \lambda_j(X_j - i Y_j) \); it follows that
\[
\bar D X_j =   \Re\lambda_j X_j + \Im\lambda_j Y_j
\qquad\text{and}\qquad
\bar D Y_j = - \Im\lambda_j X_j + \Re\lambda_j Y_j .
\]
Since also \( \bar D W_k = \mu_k W_k \), it follows by \( \R \)-linearity that \( \bar D \) preserves \( \mathfrak{g} \), as required.
\end{proof}

\begin{corollary}[After {\cite[Corollary 2.6]{Golo-LeDonne}}] \label{cor:super-Jordan}
Suppose that \( \mathfrak{g} \) is a Lie algebra and \( D \) is a derivation of \( \mathfrak{g} \).
Then we may write \( D = D_{\mathrm{sr}} + D_{\mathrm{si}} + D_{\mathrm{n}} \), where each summand is a derivation of \( \mathfrak{g} \), each summand commutes with the other summands, and \( D_{\mathrm{sr}} \) is semisimple with real eigenvalues, \( D_{\mathrm{si}} \) is semisimple with purely imaginary eigenvalues, and \( D_{\mathrm{n}} \) is nilpotent.
Moreover, the ranges \( \Ran(D_{\mathrm{sr}}) \),  \( \Ran(D_{\mathrm{si}}) \) and \( \Ran( D_{\mathrm{n}}) \) are all subspaces of the range \( \Ran(D) \).
\end{corollary}

\begin{proof}
Bourbaki \cite[Proposition 4, page 6]{Bourbaki7-9} shows that we may write \( D \) as \( D_{\mathrm{s}} + D_{\mathrm{n}} \), the commuting sum of a semisimple and a nilpotent derivation.
Further, \( D_{\mathrm{s}} \) decomposes as the commuting sum
\( D_{\mathrm{sr}} + D_{\mathrm{si}} \) of derivations, where the summands have real and purely imaginary eigenvalues, by Lemma \ref{lem:shadow-step-1}.
It remains to show that \( D_{\mathrm{sr}} \) and \( D_{\mathrm{n}} \) commute, and to examine the ranges.

We choose a Jordan basis for \( \mathfrak{g} \) so that \( D \) is in real Jordan normal form; then in each block, the nilpotent part commutes with the real and imaginary parts of the diagonal part, and the ranges behave as claimed.
\end{proof}

\subsection{Nilpotent Lie groups and algebras}\label{ssec:nilpotent}
We recall some standard definitions and properties of nilpotent Lie algebras.

The \introd{upper central series} of a Lie algebra \( \mathfrak{g} \) is defined recursively:
\[
\mathfrak{g}^{[0]} \coloneqq  \{0\}
\qquad\text{and}\qquad
\mathfrak{g}^{[j+1]} / \mathfrak{g}^{[j]} \coloneqq  Z(\mathfrak{g}/\mathfrak{g}^{[j]}).
\]
Then
\[
\mathfrak{g}^{[0]} \subseteq \mathfrak{g}^{[1]} \subseteq \mathfrak{g}^{[2]} \subseteq \dots .
\]
The subspaces in this series increase strictly and then stabilise, that is, all later terms coincide.
The series reaches \( \mathfrak{g} \) if and only if \( \mathfrak{g} \) is nilpotent; in this case, the least positive integer \( \ell \) such that \( \mathfrak{g}^{[\ell]} = \mathfrak{g} \) is called the \emph{nilpotent length} of \( \mathfrak{g} \).
Each \( \mathfrak{g}^{[j]} \) is a characteristic ideal, that is,
\( D\mathfrak{g}^{[j]} \subseteq \mathfrak{g}^{[j]} \) for any derivation \( D \) of \( \mathfrak{g} \).
The upper central series of the complexification \( \mathfrak{g}_{\C} \) is the complexification of the upper series on \( \mathfrak{g} \), that is,
\( (\mathfrak{g}_{\C})^{[j]} = (\mathfrak{g}^{[j]})_{\C} \).

The \introd{lower central series} of a Lie algebra \( \mathfrak{g} \) is also defined recursively:
\begin{equation}\label{eq:def-lcc}
\mathfrak{g}_{[0]} \coloneqq  \mathfrak{g}
\qquad\text{and}\qquad
\mathfrak{g}_{[j+1]} \coloneqq  [\mathfrak{g}, \mathfrak{g}_{[j]}].
\end{equation}
Then
\[
\mathfrak{g}_{[0]} \supseteq \mathfrak{g}_{[1]} \supseteq \mathfrak{g}_{[2]} \supseteq \dots .
\]
The subspaces in this series decrease strictly and then stabilise.
The series reaches \( \{0\} \) if and only if \( \mathfrak{g} \) is nilpotent; in this case, the least positive integer \( j \) such that \( \mathfrak{g}_{[j]} = \{0\} \) coincides with the nilpotent length of \( \mathfrak{g} \).
Each \( \mathfrak{g}_{[j]} \) is a characteristic ideal, and the lower central series of the complexification \( \mathfrak{g}_{\C} \) is the complexification of the lower central series on \( \mathfrak{g} \).

If \( \mathfrak{n} \) is a nilpotent Lie algebra, then there is a connected simply connected (indeed, contractible) Lie group \( N \) such that the exponential mapping is a diffeomorphism from \( \mathfrak{n} \) onto \( N \), and \( \exp(\mathfrak{g}) \) is a closed subgroup of \( N \) for every subalgebra \( \mathfrak{g} \) of \( \mathfrak{n} \)  (see \cite[Section 3.6]{Varadarajan}).
But not all nilpotent Lie groups are simply connected.

In general, every nilpotent Lie group \( N \) contains a maximal torus \( T \) that is central (see Lemma \ref{lem:central torus}), and \( N/T \) is simply connected.
This implies that if \( \mathfrak{g} \) is a subalgebra of \( \mathfrak{n} \) that contains \( \mathfrak{t} \), the Lie algebra of the maximal torus \( T \), then \( \exp(\mathfrak{g}/\mathfrak{t}) \) is closed in \( N/T \) and it follows that \( \exp(\mathfrak{g}) \) is closed in \( N \).

\subsection{Modifications}\label{ssec:modifications}
Many nonlinear problems on Lie groups may be solved by turning them into linear problems on Lie algebras.
This is certainly the case for us.
Corollary \ref{cor:isometric-groups-complementary-subgroups} shows that we are interested in examples of connected groups \( H \) with closed connected subgroups \( G_0 \), \( G_1 \) and \( K \) such that \( K \) is compact and \( H = G_0 \rtimes K = G_1 \cdot K \).
This implies that the corresponding Lie algebras satisfy \( \mathfrak{h} = \mathfrak{g}_0 \oplus \mathfrak{k} = \mathfrak{g}_1 \oplus \mathfrak{k} \) and \( \mathfrak{g}_0 \) is an ideal in \( \mathfrak{h} \).
In this situation, for all \( X \in \mathfrak{g}_0 \), there exists a unique \( \sigma X  \in \mathfrak{k} \)
such that \( X + \sigma X  \in \mathfrak{g}_1 \).
Evidently, \( \sigma: \mathfrak{g}_0 \to \mathfrak{k} \) is linear and
\[
\mathfrak{g}_1 = \{ X + \sigma X  : X \in \mathfrak{g}_0 \}.
\]
The map \( \sigma \) and algebra \( \mathfrak{g}_1 \) are examples of a \emph{modification map} and a \introd{modification} in the terminology of Gordon and Wilson \cite{Gordon-Wilson-solvmanifolds}.
We shall be interested in modifications in the case where \( \mathfrak{k} \) is the Lie algebra of a torus (so we write \( \mathfrak{t} \)).

The following technical lemma follows from Gordon and Wilson \cite[Theorem 2.5]{Gordon-Wilson-solvmanifolds}.
We give a different proof.

\begin{lemma}\label{lem:modifications-are-normal}
Suppose that \( \mathfrak{h} \) is a Lie algebra of the form \( \mathfrak{n} \oplus \mathfrak{t} \), where \( \mathfrak{n} \) is a nilpotent ideal and \( \mathfrak{t} \) is a toral subalgebra such that \( \mathfrak{t} \cap Z(\mathfrak{h}) = \{0\} \).
Suppose also that \( \mathfrak{g} \) is a subalgebra of \( \mathfrak{h} \) such that \( \mathfrak{h} = \mathfrak{g} \oplus \mathfrak{t} \).
Then \( \mathfrak{g} \) is an ideal in \( \mathfrak{h} \).
\end{lemma}

\begin{proof}
We consider \( \mathfrak{g} \) as a modification of \( \mathfrak{n} \), that is we choose \( \sigma: \mathfrak{n} \to \mathfrak{t} \) such that
\[
\mathfrak{g} = \{ X + \sigma X  : X \in \mathfrak{n} \} .
\]
Since \( \mathfrak{t} \) is abelian and \( \mathfrak{g} \) is a subalgebra,
\begin{equation*}
[X, Y]  + [\sigma X  ,Y] - [\sigma Y ,X]
= [ X + \sigma X , Y + \sigma Y  ] \in \mathfrak{g}
\qquad\forall X, Y \in \mathfrak{n}.
\end{equation*}
All terms on the left-hand side lie in \( \mathfrak{n} = \Dom(\sigma) \), and \( \sigma(\mathfrak{g} \cap \mathfrak{n}) = \{0\} \), so
\begin{equation}\label{eq:key-eqn-1}
\sigma[X,Y] = \sigma[\sigma Y ,X] - \sigma[\sigma X ,Y]
\qquad\forall X, Y \in \mathfrak{n}.
\end{equation}

We are going to use induction on \( \dim(\mathfrak{h}) \).
If \( \dim(\mathfrak{h}) \) is \( 0 \), \( 1 \) or \( 2 \), then \( \mathfrak{g} \) is trivially an ideal.
We assume for the rest of the proof that \( \mathfrak{g}' \) is an ideal in \( \mathfrak{h}' \) whenever \( \mathfrak{h}' \), \( \mathfrak{n}' \), \( \mathfrak{t}' \) and \( \mathfrak{g}' \) satisfy the hypotheses of the lemma and \( \dim(\mathfrak{h}') < \dim(\mathfrak{h}) \).

By Weyl's unitarian trick, we may equip \( \mathfrak{n} \) with an inner product such that each of the family of linear maps \( \ad(\mathfrak{t}) \) is skew-symmetric.
Since \( \mathfrak{n} \) is \( \ad(\mathfrak{t}) \)-invariant, so is each member \( \mathfrak{n}_{[j]} \) of the lower central series (see Section \ref{ssec:nilpotent}), and there are (unique) \( \ad(\mathfrak{t}) \)-invariant subspaces \( \mathfrak{v}_{[j]} \) such that \( \mathfrak{n}_{[j-1]} = \mathfrak{v}_{[j]} \oplus \mathfrak{n}_{[j]} \).
It is not hard to show inductively that \( \mathfrak{n}_{[j]} = \ad(\mathfrak{v}_{[1]})^{j}  \mathfrak{v}_{[1]} + \mathfrak{n}_{[j+1]} \), and hence \( \mathfrak{v}_{[1]} \) generates \( \mathfrak{n} \).
Further, as \( \mathfrak{t} \) is abelian, we may decompose the spaces \( \mathfrak{v}_{[j]} \) into minimal \( \ad(\mathfrak{t}) \)-invariant subspaces, of dimension \( 1 \) or \( 2 \), which we label \( \mathfrak{w}_k \).

\subsubsection*{Step 1: a consequence of \eqref{eq:key-eqn-1}}
Suppose that \( \mathfrak{w}_j \) and \( \mathfrak{w}_k \) are minimal \( \ad(\mathfrak{t}) \)-invariant subspaces of \( \mathfrak{n} \), and \( \sigma[ \mathfrak{w}_j, \mathfrak{w}_k ] = \{0\} \).
We claim, and shall now prove, that
\begin{equation}\label{eq:claim-1}
 \sigma[\sigma Y ,X] = 0
\qquad\forall X \in \mathfrak{w}_j \quad\forall Y \in \mathfrak{w}_k,
\end{equation}
or equivalently, \( \sigma[\sigma X ,Y] = 0 \), since in light of our hypothesis,
\begin{equation}\label{eq:claim-1a}
\sigma[\sigma Y ,X] = \sigma[\sigma X ,Y]
\qquad\forall X \in \mathfrak{w}_j \quad\forall Y \in \mathfrak{w}_k.
\end{equation}
If \( \dim(\mathfrak{w}_k) = 1 \), then \eqref{eq:claim-1} holds, since
\( \ad(\sigma X) \) is skew-symmetric, so \( [\sigma X ,Y] =0 \); similarly \eqref{eq:claim-1} holds if \( \dim(\mathfrak{w}_j) = 1 \).
If both \( \mathfrak{w}_j \) and \( \mathfrak{w}_k \) are \( 2 \)-dimensional, then, as the dimension of the space of skew-symmetric maps of \( \R^2 \) is \( 1 \)-dimensional, we may take an orthonormal basis \( \{ X_0, X_1 \} \) of \( \mathfrak{w}_j \) such that \( \ad(\sigma X_0)|_{\mathfrak{w}_k} = 0 \).
This implies that
\[
\sigma[\sigma Y, X_0] = \sigma[\sigma X_0, Y] = 0
\qquad\forall Y \in \mathfrak{w}_k.
\]
Now there are two possibilities: either \( [\sigma Y, X_0] = 0 \) for all \( Y \in \mathfrak{w}_k \), or there exists \( Y \in \mathfrak{w}_k \) such that \( [\sigma Y, X_0] \neq 0 \).
In the former case, the skew-symmetry of \( \ad(\sigma Y)|_{\mathfrak{w}_j} \) implies that \( \ad(\sigma Y)|_{\mathfrak{w}_j} = \{0\} \), for all \( Y \in \mathfrak{w}_k \), and \eqref{eq:claim-1} holds.
In the latter case, there exists \( Y \in \mathfrak{w}_k \) such that \( [\sigma Y, X_0] = X_1 \) and hence \( \sigma X_1 = 0 \); coupled with the fact that \( \ad(\sigma X_0)|_{\mathfrak{w}_k} = 0 \),  this shows that \( \ad(\sigma X)|_{\mathfrak{w}_k} = 0 \) for all \( X \in \mathfrak{w}_j \) and \eqref{eq:claim-1} holds in this case too from \eqref{eq:claim-1a}.

{
\subsubsection*{Step 2: the case where \( \mathfrak{n} \) is abelian}
We recall that \( \mathfrak{t} \cap Z(\mathfrak{h}) = \{ 0\} \).

Since
\[
[\mathfrak{h}, \mathfrak{g}]
= [\mathfrak{t}, \mathfrak{g}] + [\mathfrak{g}, \mathfrak{g}]
\subseteq [\mathfrak{t}, \mathfrak{g}] + \mathfrak{g} ,
\]
\( \mathfrak{g} \) is an ideal if and only if \( [\mathfrak{t},\mathfrak{g}] \subseteq \mathfrak{g} \).

We consider the decomposition of \( \mathfrak{n} \) into \( \ad(\mathfrak{t}) \)-invariant subspaces \( \mathfrak{w}_j \), as described in the second paragraph of this proof.
Since \( \mathfrak{n} \) is abelian, \( [ \mathfrak{w}_j, \mathfrak{w}_k ] = \{0\} \) for all \( j \) and \( k \).
If \( \sigma X = 0 \) for all \( X \in \mathfrak{w}_j \) and for all \( j \), then \( \mathfrak{g} = \mathfrak{n} \) and we are done.
Otherwise, we fix a summand \( \mathfrak{w}_j \) and \( X \in \mathfrak{w}_j \) such that \( \sigma X \neq 0 \), and then our assumption that \( \mathfrak{t} \cap Z(\mathfrak{h}) = \{ 0\} \) implies that there exists \( k \) such that \( [\sigma X ,\mathfrak{w}_k] \neq \{0\} \).
Now \( \sigma[\sigma X,Y] = 0 \) for all \( Y \in \mathfrak{w}_k \) and since \( \ad(\sigma X)|_{\mathfrak{w}_k} \) is surjective, \( \sigma Y = 0 \) for all \( Y \in \mathfrak{w}_k \).
Then \( \mathfrak{w}_k \) is a nontrivial ideal in \( \mathfrak{h} \) that is contained in \( \mathfrak{n} \) and in \( \mathfrak{g} \).
We may now write
\[
  \mathfrak{h}'
= \mathfrak{n}' \oplus \mathfrak{t}'
= \mathfrak{g}' \oplus \mathfrak{t}',
\]
where
\[
\mathfrak{h}' = \mathfrak{h} / \mathfrak{w}_k, \qquad
\mathfrak{n}' = \mathfrak{n} /\mathfrak{w}_k, \qquad
\mathfrak{g}' = \mathfrak{g} /\mathfrak{w}_k, \quad\text{and}\quad
\mathfrak{t}' = (\mathfrak{t} + \mathfrak{w}_k)/\mathfrak{w}_k \simeq \mathfrak{t},
\]
and it is easy to show that \( \mathfrak{h}' \), \( \mathfrak{n}' \), \( \mathfrak{t}' \) and \( \mathfrak{g}' \) satisfy the hypotheses of the lemma and \( \dim(\mathfrak{h}') < \dim(\mathfrak{h}) \), and so \( \mathfrak{g}' \) is an ideal in \( \mathfrak{h}' \) by the inductive assumption and hence \( \mathfrak{g} \) is an ideal in \( \mathfrak{h} \).

For the rest of the proof, we may and shall assume that \( \mathfrak{n} \) is not abelian.
}

\subsubsection*{Step 3: the induction on dimension argument}
Suppose that \( \mathfrak{n}_0 \) is a (nontrivial) ideal in \( \mathfrak{h} \), that \( \mathfrak{n}_0\subseteq \mathfrak{n}_{[1]} \), and that \( \sigma \mathfrak{n}_0 = \{0\} \),
that is, \( \mathfrak{n}_0 \subseteq \mathfrak{n}_{[1]} \cap \mathfrak{g} \).
In this case, we may show that \( \mathfrak{g} \) is an ideal by induction on dimension.
Indeed, we may write
\[
  \mathfrak{h}'
= \mathfrak{n}' \oplus \mathfrak{t}'
= \mathfrak{g}' \oplus \mathfrak{t}',
\]
where
\[
\mathfrak{h}' = \mathfrak{h} / \mathfrak{n}_0, \qquad
\mathfrak{n}' = \mathfrak{n} /\mathfrak{n}_0, \qquad
\mathfrak{g}' = \mathfrak{g} /\mathfrak{n}_0, \quad\text{and}\quad
\mathfrak{t}' = (\mathfrak{t} + \mathfrak{n}_0)/\mathfrak{n}_0 \simeq \mathfrak{t}.
\]
By our inductive assumption, \( \mathfrak{g}' \) is an ideal in \( \mathfrak{h}' \), and hence \( \mathfrak{g} \) is an ideal in \( \mathfrak{h} \), as required.

\subsubsection*{Step 4: minimal \( \ad(\mathfrak{t}) \)-invariant subspaces}
Suppose that there exists a subspace \( \mathfrak{w}_j \) such that
\( \sigma(\mathfrak{w}_j) = \{0\} \).
Then for all \( X \in \mathfrak{w}_j \), all \( Y \in \mathfrak{n} \) and all \( U \in \mathfrak{t} \),
\[
\sigma [X,Y] = \sigma[ \sigma X, Y] + \sigma[ \sigma Y, X] = 0
\]
since \( \sigma X = 0 \) by hypothesis and \( [\sigma Y, X ] \in \mathfrak{w}_j \), and
\[
\sigma [U, [X,Y]] = \sigma[[U, X], Y] + \sigma[ X , [U, Y]] = 0
\]
similarly.
Define
\[
\mathfrak{n}_0
\coloneqq  \mathfrak{w}_j + [\mathfrak{h}, \mathfrak{w}_j] + [\mathfrak{h}, [\mathfrak{h}, \mathfrak{w}_j]] + \dots ;
\]
then \( \mathfrak{n}_0 \) is an ideal in \( \mathfrak{h} \) and \( \sigma \mathfrak{n}_0 = \{0\} \), that is, \( \mathfrak{n}_0 \subseteq \mathfrak{g} \).

There are now two possibilities: \( \mathfrak{n}_0 \not\subseteq \mathfrak{v}_{[1]} \) or \( \mathfrak{n}_0 \subseteq \mathfrak{v}_{[1]} \).
In the first case, define \( \mathfrak{n}_1 \coloneqq  \mathfrak{n}_0 \cap \mathfrak{n}_{[1]} \).
Then \( \mathfrak{n}_1  \) is also an ideal which may be factored out much as in Step 3 to show that \( \mathfrak{g} \) is an ideal by induction on dimension.
Otherwise, \( \mathfrak{n}_0 \) is central in \( \mathfrak{n} \) and an ideal in \( \mathfrak{h} \), and may be factored out so that induction on dimension again shows
that \( \mathfrak{g} \) is an ideal.

\subsubsection*{Step 5: Denouement}
Take \( \mathfrak{w}_j \subseteq \mathfrak{n}_{[\ell-1]} \); then \( \mathfrak{w}_j \) is an ideal in \( \mathfrak{h} \), where \( \ell \) is the nilpotent length of \( \mathfrak{n} \).
If \( \dim(\mathfrak{w}_j) = 2 \), then there exists \( X \in \mathfrak{w}_k \subseteq \mathfrak{n} \) such that \( \ad(\sigma X)|_{\mathfrak{w}_j} \neq 0 \).
Now \( \sigma[\sigma X, Y] = 0 \) for all \( Y \in \mathfrak{w}_j \) by \eqref{eq:claim-1} and hence \( \sigma\mathfrak{w}_j = \{0\} \).
We may factor out \( \mathfrak{w}_j \) and show that \( \mathfrak{g} \) is an ideal by induction on dimension, as in Step 3.
Otherwise, if \( \dim(\mathfrak{w}_j) = 1 \) and \( \sigma \mathfrak{w}_j = \{0\} \), then \( \mathfrak{w}_j \) is an ideal which we may factor out to apply the induction on dimension argument and show that \( \mathfrak{g} \) is an ideal.
Finally, if \( \dim(\mathfrak{w}_j) = 1 \) and \( \sigma \mathfrak{w}_j \neq \{0\} \), there exists \( \mathfrak{w}_k \subseteq \mathfrak{n} \) such that \( [ \sigma \mathfrak{w}_j, \mathfrak{w}_k] = \mathfrak{w}_k \),
and now \( \sigma \mathfrak{w}_k = \sigma[ \sigma \mathfrak{w}_j, \mathfrak{w}_k] = \{0\} \) by \eqref{eq:claim-1}, so again we may apply the result of Step 4 to conclude that \( \mathfrak{g} \) is an ideal.
\end{proof}

\subsection{Split-solvability and the real-radical}\label{ssec:split-solv}

Recall that a solvable Lie algebra \( \mathfrak{g} \) is said to be \introd{split-solvable} (or \emph{completely solvable}) if the eigenvalues of each \( \ad(X) \), where \( X \in \mathfrak{g} \), are real.
A connected Lie group \( G \) is said to be split-solvable if its Lie algebra is split-solvable.
If \( G \) is split-solvable and of polynomial growth, then the eigenvalues of each \( \ad(X) \) are also purely imaginary, and so they are all zero, that is, \( G \) is nilpotent.
Similarly, every toral subalgebra \( \mathfrak{t} \) of a split-solvable Lie algebra \( \mathfrak{g} \) is central; indeed, if \( U \in \mathfrak{t} \), then the eigenvalues of \( \ad(U) \) are \( 0 \) because they are simultaneously real and purely imaginary, and since \( \ad(U) \) is semisimple, then \( \ad(U) = 0 \).

\begin{lemma}\label{lem:split-solvable}
Suppose that \( G \) is a connected Lie group with Lie algebra \( \mathfrak{g} \).
The following conditions are equivalent:
\begin{enumerate}
  \item the eigenvalues of each \( \Ad(g) \), where \( g \in G \), are positive;
  \item the eigenvalues of each \( \ad(X) \), where \( X \in \mathfrak{g} \), are real;
  \item there are ideals \( \mathfrak{g}_j \) in \( \mathfrak{g} \) that form a complete flag, that is,
\[
\{0\} = \mathfrak{g}_0 \subset \mathfrak{g}_1 \subset \dots \subset \mathfrak{g}_{n-1} \subset \mathfrak{g}_{n} = \mathfrak{g}
\]
and \( \dim(\mathfrak{g}_j) = j \) for all \( j \); and
  \item there are closed connected normal subgroups \( G_j \) of \( G \) such that
\[
\{0\} = G_0 \subset G_1 \subset \dots \subset G_{n-1} \subset G_{n} = G
\]
and \( \dim(G_j) = j \) for all \( j \).
\end{enumerate}
Further, if the Lie algebra \( \mathfrak{g} \) satisfies condition (ii) and \( \mathfrak{f}_1 \subset \mathfrak{f}_2 \subset \dots \subset \mathfrak{f}_{l-1} \subset \mathfrak{f}_{l} \) is any partial flag of ideals of \( \mathfrak{g} \), then it is possible to choose the ideals \( \mathfrak{g}_j \) in condition (iii) such that all the \( \mathfrak{f}_k \) appear in the complete flag.
\end{lemma}

\begin{proof}
The exponential mapping is both surjective and injective on simply connected split-solvable Lie groups (see \cite{Dixmier}); it follows that the exponential mapping is surjective on all split-solvable Lie groups.
Since \( \exp(\ad(X)) = \Ad(\exp(X)) \) for all \( X \in \mathfrak{g} \), (i) and (ii) are equivalent.

See \cite[p.~45]{Knapp-Lie-Groups-Beyond} for the equivalence of (ii) and (iii).
By passing to Lie algebras, it is clear that (iv) implies (iii).

If (iii) holds and \( \mathfrak{f}' \) and \( \mathfrak{f}'' \) are ideals in \( \mathfrak{g} \) such that \( \mathfrak{f}' \subseteq \mathfrak{f}'' \), then \( \mathfrak{f}' + (\mathfrak{g}_j \cap \mathfrak{f}'') \) is also an ideal in \( \mathfrak{g} \), and as \( j \) increases by \( 1 \), the dimension of \( \mathfrak{f}' + (\mathfrak{g}_j \cap \mathfrak{f}'') \) increases by at most \( 1 \).
Thus by omitting repeated terms and relabelling, we obtain a flag of ideals that starts at \( \mathfrak{f}' \), increases in dimension by \( 1 \) at each step, and arrives at \( \mathfrak{f}'' \).
By applying this observation with \( \mathfrak{f}' \) and \( \mathfrak{f}'' \) taken to be adjacent
terms \( \mathfrak{f}_{j-1} \) and \( \mathfrak{f}_{j} \) of the partial flag of ideals of \( \mathfrak{g} \), and then concatenating the flags between \( \mathfrak{f}_{j-1} \) and \( \mathfrak{f}_{j} \) for different \( j \), we obtain a complete flag that passes
through all the ideals of the partial flag.

Let \( \mathfrak{t} \) be the Lie algebra of a maximal compact subgroup of \( G \), which is necessarily a torus since \( G \) is solvable, and central (as argued before the statement of this lemma).
Choose a basis \( \{Z_1, \dots, Z_m\} \) of \( \mathfrak{t} \) such that each \( \exp(\R Z_j) \) is a \( 1 \)-dimensional torus in \( G \), and let \( \mathfrak{f}_k \) be \( \Span\{Z_1, \dots, Z_k\} \) when \( 1 \leq k \leq m \).

Extend the partial flag of ideals \( \mathfrak{f}_1 \subset \dots \subset \mathfrak{f}_{m} \) to a complete flag of ideals \( \mathfrak{g}_j \), and let \( G_j \) be the normal analytic subgroup of \( G \) that corresponds to \( \mathfrak{g}_j \).
If \( j \leq m \), then \( G_j \) is closed by construction.
If \( j \geq m \), then \( \mathfrak{g}_j \supseteq \mathfrak{t} \), and \( \mathfrak{g}_j / \mathfrak{t} \) is a subalgebra of \( \mathfrak{g} / \mathfrak{t} \).
Since \( G/T \) is connected and simply connected, the analytic subgroup \( G_j/T \) of \( G/T \) corresponding to \( \mathfrak{g}_j/\mathfrak{t} \) is closed by \cite[end of Section II]{Chevalley}, whence \( G_j \) is closed in \( G \).
\end{proof}

\begin{proposition}\label{prop:split-solv}Let \( G \) be a split-solvable connected Lie group and \( H \) be a connected Lie group.
\begin{enumerate}
  \item If \( F \) is an analytic subgroup of \( G \), then \( F \) is split-solvable.
  \item If \( M \) is a closed normal subgroup of \( G \), then \( G/M \) is split-solvable.
  \item If \( T \) is a central torus in \( H \) then \( H/T \) is split-solvable if and only if \( H \) is split-solvable.
  \item If \( G \) is a dense analytic subgroup of \( H \), then \( H \) is split-solvable.
\end{enumerate}
\end{proposition}

\begin{proof}
We will work with the Lie algebras.

Suppose that there are ideals \( \mathfrak{g}_j \) in \( \mathfrak{g} \) such that
\[
\{0\} = \mathfrak{g}_0 \subset \mathfrak{g}_1 \subset \dots \subset \mathfrak{g}_{n-1} \subset \mathfrak{g}_{n} = \mathfrak{g} ,
\]
and \( \dim(\mathfrak{g}_j / \mathfrak{g}_{j-1}) =1 \) for all \( j \) (or equivalently, \( \dim(\mathfrak{g}_j) = j \)).
If \( \mathfrak{f} \) is a subalgebra of \( \mathfrak{g} \), then \( [ \mathfrak{f} , \mathfrak{g}_j  \cap \mathfrak{f}] \subseteq \mathfrak{g}_j \cap \mathfrak{f} \), so \( \mathfrak{g}_j  \cap \mathfrak{f} \) is an ideal in \( \mathfrak{f} \), and
\[
\{0\} =  \mathfrak{g}_0  \cap \mathfrak{f} \subseteq \mathfrak{g}_1  \cap \mathfrak{f} \subseteq \dots \subseteq \mathfrak{g}_{n-1}  \cap \mathfrak{f} \subseteq \mathfrak{g}_{n}  \cap \mathfrak{f} = \mathfrak{f},
\]
and \( \dim((\mathfrak{g}_j \cap \mathfrak{f})/( \mathfrak{g}_{j-1} \cap \mathfrak{f})) \leq 1 \) for all \( j \).
After eliminating all ideals \(  \mathfrak{f} \cap \mathfrak{g}_{j} \) that agree with the preceding ideal \( \mathfrak{f} \cap \mathfrak{g}_{j-1} \), we obtain a complete flag of ideals that shows that \( \mathfrak{f} \) is split-solvable.
Similarly, if \( \mathfrak{m} \) is an ideal in \( \mathfrak{g} \), then so is \( \mathfrak{g}_j + \mathfrak{m} \), and
\[
\{0\} = (\mathfrak{g}_0  + \mathfrak{m} ) / \mathfrak{m} \subseteq (\mathfrak{g}_1  + \mathfrak{m} ) / \mathfrak{m} \subseteq \dots \subseteq (\mathfrak{g}_{n-1}  + \mathfrak{m} ) / \mathfrak{m} \subseteq (\mathfrak{g}_{m}  + \mathfrak{m} ) / \mathfrak{m} = \mathfrak{g} / \mathfrak{m};
\]
again, after elimination of redundant terms, we produce a flag of ideals that show that \( \mathfrak{g}/ \mathfrak{m} \) is split-solvable.

For part (iii), note that \( H \) is solvable if and only if \( H/T \) is solvable.
If \( H \) is split-solvable, then so is \( H/T \) by part (ii).
Conversely, suppose that \( H/T \) is split solvable.
If \( X \in \mathfrak{h} \) and \( \lambda \) is an eigenvalue of \( \ad(X) \) acting on \( \mathfrak{h} \), then there exists \( Y \) in the complexification \( \mathfrak{h}_{\C} \) of \( \mathfrak{h} \) such that \( \ad(X) Y =  \lambda Y \).
For these \( X \), \( \lambda \) and \( Y \),
\[
\ad(X + \mathfrak{t})(Y + \mathfrak{t}) \subseteq \lambda (Y + \mathfrak{t}) ,
\]
and \( \lambda \) is an eigenvalue of the adjoint action of \( \ad(X + \mathfrak{t}) \) acting on \( \mathfrak{h}/\mathfrak{t} \), whence \( \lambda \in \R \)
Hence \( H \) is split-solvable, as required.

To prove (iv), suppose that \( G \) is a split-solvable analytic subgroup of the connected Lie group \( H \).
We need to show that \( H \) is split-solvable.
Now if \( H \) is also simply connected, then \( G \) is closed in \( H \) by \cite[end of Section II]{Chevalley}, and so \( G = H \) and \( H \) is split-solvable.

Let \( T \) be a maximal torus in \( H \).
We propose to show that \( T \) is central in \( H \); it then follows that \( GT/T \) is a dense split-solvable analytic subgroup of \( H/T \), and \( H/T \) is simply connected, whence \( GT = H \) and \( H/T \) is split-solvable, and so \( H \) is split-solvable.

Let \( \mathfrak{m} \) be the nilradical of \( \mathfrak{g} \) and \( M \) be the corresponding subgroup of \( G \), hence of \( H \).
Let \( N \) be the closure of \( M \) in \( H \), and \( \mathfrak{n} \) be its Lie algebra.
Since \( M \) is nilpotent, so is \( N \), and since \( xMx^{-1} = M \) for all \( x \in G \),  we see first that \( xNx^{-1} = N \) for all \( x \in G \) by closing \( M \), and then that \( xNx^{-1} = N \) for all \( x \in H \) since \( G \) is dense in \( H \).
If follows that \( N \) is normal in \( H \) and \( \mathfrak{n} \) is a nilpotent ideal in \( \mathfrak{h} \).

For \( x \in H \), we denote by \( \Ad_{\mathfrak{m}}(x) \) the restriction of \( \Ad(x) \) to \( \mathfrak{m} \), and by \( p_{\mathfrak{m}}(x, \cdot)  \) its characteristic polynomial:
\[
p_{\mathfrak{m}}(x, \lambda) \coloneqq  \det\bigl( (\Ad(x) - \lambda I)|_{\mathfrak{m}} \bigr)
\qquad\forall \lambda \in \C.
\]
If \( x \in G \), then \( \Ad_{\mathfrak{m}}(x) \) has positive real eigenvalues so all roots of \( p_{\mathfrak{m}}(x, \cdot) \) are positive real numbers.
By density, if \( x \in H \), then \( p_{\mathfrak{m}}(x, \cdot) \) is a limit of polynomials with positive real roots, so the roots of \( p_{\mathfrak{m}}(x, \cdot) \) are nonnegative real numbers.
However, \( \Ad_{\mathfrak{m}}(x) \) is invertible, and so all the roots of \( p_{\mathfrak{m}}(x, \cdot) \) are positive.

If \( x \in T \), then all the roots of \( p_{\mathfrak{m}}(x, \cdot) \) are of modulus \( 1 \), by Weyl's unitarian trick, and so all the roots of \( p_{\mathfrak{m}}(x, \cdot) \) are \( 1 \).
Since \( \Ad_{\mathfrak{m}}(x) \) is semisimple, it is the identity mapping.
It follows that conjugation by \( x \) fixes every point of \( M \); again by density, conjugation by \( x \) fixes every point of \( N \).
Passing to the Lie algebras, if \( U \in \mathfrak{t} \), then \( \ad(U) | _{\mathfrak{n}} = 0 \).

Now the commutator subgroup \( [G,G] \) of \( G \) is contained in \( M \) (since \( [\mathfrak{g}, \mathfrak{g}] \subseteq \mathfrak{m} \)), and by density, the commutator subgroup \( [H,H] \) of \( H \) is contained in \( N \).
Hence \( [\mathfrak{h}, \mathfrak{h}] \subseteq \mathfrak{n} \), and so if \( Y \in \mathfrak{h}_\C \) is an eigenvector for the adjoint action of \( \ad(U) \) (where \( U \in \mathfrak{t} \)) on \( \mathfrak{h}_{\C} \) corresponding to a nonzero eigenvalue, then \( Y \in \mathfrak{n}_{\C} \).
It follows that all eigenvalues of \( \ad(U) \) are \( 0 \), and as \( \ad(U) \) is semisimple, \( \ad(U) = 0 \) and \( U \) is central.
We conclude that \( \mathfrak{t} \) is central, and we are done.
\end{proof}

The next two results are largely due to Jablonski \cite{Jablonski}.

\begin{lemma}\label{lem:Jablonski-sum-lemma}
Let \( X \) and \( Y \) be elements of a solvable Lie algebra \( \mathfrak{h} \).
Then the spectrum of \( \ad(X+Y) \) is contained in the sum of the spectra of \( \ad(X) \) and \( \ad(Y) \).
\end{lemma}

\begin{proof}
This proof is due to Jablonski \cite{Jablonski}, and included for the convenience of the reader.

We may pass to the complexification \( \mathfrak{h}_{\C} \) without changing the eigenvalues.
Now by Lie's theorem, \( \ad(\mathfrak{h}) \) may be represented as an algebra of upper triangular matrices, and the eigenvalues of \( \ad(X) \) correspond to the diagonal entries of the associated matrix.
It follows that the eigenvalues of \( \ad(X+Y) \) are sums of eigenvalues of \( \ad(X) \) and \( \ad(Y) \).
\end{proof}

\begin{theorem}[{After Jablonski \cite{Jablonski}}]\label{thm:rradical}
Let \( G \) be a Lie group with Lie algebra \( \mathfrak{g} \).
Then \( \mathfrak{g} \) contains a unique maximal split-solvable ideal \( \mathfrak{s} \), which is characteristic in the sense that \( \phi(\mathfrak{s}) = \mathfrak{s} \) for any isomorphism \( \phi: \mathfrak{g} \to \mathfrak{g} \).
The analytic subgroup \( S \) of \( G \) corresponding to \( \mathfrak{s} \) is closed, connected, and normal in \( G \).
If \( G \) is connected and simply connected, then so is \( S \).
If \( T \) is a torus contained in \( S \), then \( T \) is normal in \( G \) and central in the connected component of the identity in \( G \).
\end{theorem}

\begin{proof}
Jablonski \cite[Proposition 2.1]{Jablonski} showed that \( \mathfrak{g} \) contains a unique maximal split-solvable ideal \( \mathfrak{s} \);
again we summarise the argument for the reader's convenience.
Let \( \mathfrak{r} \) and \( \mathfrak{n} \) denote the radical and nilradical of \( \mathfrak{g} \).

If \( \mathfrak{s}' \) and \( \mathfrak{s}'' \) are split-solvable ideals of \( \mathfrak{g} \), then both are contained in \( \mathfrak{r} \), and further, \( \mathfrak{s}' + \mathfrak{s}'' \) is an ideal.
If we can show that \( \mathfrak{s}'+\mathfrak{s}'' \) is split-solvable, then we may define \( \mathfrak{s} \) to be the sum of all split-solvable ideals of \( \mathfrak{g} \).

Take \( X' \in \mathfrak{s}' \) and \( X'' \in \mathfrak{s}'' \).
Since \( \mathfrak{s}' \) and \( \mathfrak{s}'' \) are ideals and \( [\mathfrak{g},\mathfrak{r}] \subseteq \mathfrak{n} \), all the nonzero eigenvalues of \( \ad(X') \) and \( \ad(X'') \) acting on \( \mathfrak{r} \) are realised on eigenvectors that lie in \( \mathfrak{s}'\cap \mathfrak{n} \) and \( \mathfrak{s}''\cap \mathfrak{n} \) respectively.
Hence \( \ad(X') \) and \( \ad(X'') \) have real eigenvalues when acting on \( \mathfrak{r} \), so \( \ad(X'+X'') \) has real eigenvalues when acting on \( \mathfrak{r} \) by Lemma \ref{lem:Jablonski-sum-lemma}, and \emph{a fortiori} when acting on \( \mathfrak{s}'+\mathfrak{s}'' \).

If \( \phi: \mathfrak{g} \to \mathfrak{g} \) is a Lie algebra isomorphism, then \( \phi(\mathfrak{s}) \) is a solvable ideal in \( \mathfrak{g} \).
Further, if \( [X, Y] = \lambda Y \), then \( [\phi(X), \phi(Y)] = \lambda \phi(Y) \), so that the eigenvalues of \( \ad(\phi (X)) \) coincide with those of \( \ad(X) \), whence \( \phi(\mathfrak{s}) \) is split-solvable.

We take \( S \) to be the Lie subgroup of \( G \) with Lie algebra \( \mathfrak{s} \).
Then \( S \) is normal in \( G \), even if \( G \) is not connected, since \( \mathfrak{s} \) is a characteristic ideal (in the sense above).
The closure \( \bar S \) is a connected normal solvable subgroup of \( G \), contained in the radical \( R \) of \( G \), and is still split-solvable by Proposition \ref{prop:split-solv}.
If \( \bar S \) were larger than \( S \), its Lie algebra would be a larger split-solvable ideal than \( \mathfrak{s} \), which is absurd.
Hence \( S \) is closed.

By \cite[end of Section II]{Chevalley}, all analytic subgroups of a connected simply connected solvable Lie group \( G \) are closed and simply connected.
In particular,  \( S \) is simply connected if \( G \) is simply connected.

Finally, if \( T \) is a torus contained in \( S \), with Lie algebra \( \mathfrak{t} \), and \( U \in \mathfrak{t} \), then \( \ad(U) \), acting on \( \mathfrak{s} \), has eigenvalues that are real as \( U \in \mathfrak{s} \) and purely imaginary as \( T \) is a torus; hence all eigenvalues of \( \ad(U) \) acting on \( \mathfrak{s} \) are \( 0 \).
Moreover \( [U,X] \in \mathfrak{n} \subseteq \mathfrak{s} \) for all \( X \in \mathfrak{g} \) because \( [\mathfrak{r}, \mathfrak{g}] \subseteq \mathfrak{n} \), and so all eigenvalues of \( \ad(U) \) acting on \( \mathfrak{g} \) are \( 0 \).
Since \( \ad(U) \) is also semisimple because \( T \) is a torus, then \( \ad(U) = 0 \) on \( \mathfrak{g} \).
Hence \( \mathfrak{t} \subseteq \mathfrak{n} \) and \( T \subseteq N \).

Lemma \ref{lem:central torus} now implies that \( T \) is normal in \( G \) and central in the connected component of the identity in \( G \).
\end{proof}

We call the Lie algebra \( \mathfrak{s} \) and the group \( S \) of the theorem above the \introd{real-radical} of \( \mathfrak{g} \) and \( G \), and denote them by \( \rrad(\mathfrak{g}) \) and \( \rrad(G) \).
The real-radical coincides with the nilradical in the special case where \( G \) is of polynomial growth.

The role of the real-radical is highlighted by the following simple result.

\begin{lemma}\label{lem:torus-and-real-radical}
Suppose that \( H \) is a connected solvable Lie group with real-radical \( S \), and \( T \) is a torus in \( H \).
Then \( S \cap T \subseteq Z(H) \) and \( \mathfrak{s} \cap \mathfrak{t} \subseteq Z(\mathfrak{h}) \).
\end{lemma}

\begin{proof}
If \( x \in S \cap T \), then every eigenvalue of \( \Ad(x) \) is of modulus \( 1 \) since \( x \in T \) by Weyl's unitarian trick, and is a positive real number since \( x \in S \).
Hence all eigenvalues are \( 1 \); since \( \Ad(x) \) is semisimple because \( x \in T \), \( \Ad(x) \) is the identity operator, whence \( x \in Z(H) \), and so \( S \cap T \subseteq Z(H) \).
\emph{A fortiori} \( \mathfrak{s} \cap \mathfrak{t} \subseteq Z(\mathfrak{h}) \).
\end{proof}

Split-solvable Lie subalgebras of solvable Lie algebras and the corresponding split-solvable subgroups of solvable Lie groups have nice properties.

\begin{theorem}[{After Jablonski \cite{Jablonski}}] \label{thm:split-solvable-is-good-algebra}
Suppose that \( \mathfrak{g} \) is a split-solvable subalgebra of a solvable Lie algebra \( \mathfrak{h} \) and \( \mathfrak{t} \) is a toral subalgebra of \( \mathfrak{h} \) such that \( \mathfrak{h} = \mathfrak{g}\oplus \mathfrak{t} \) and \( Z(\mathfrak{h}) \cap \mathfrak{t} = \{0\} \).
Then \( \mathfrak{g} \) is the real-radical of \( \mathfrak{h} \).
If \( \mathfrak{g}_1 \) is a subalgebra of \( \mathfrak{h} \) such that \( \mathfrak{h} = \mathfrak{g}_1 \oplus \mathfrak{t} \),  then \( \mathfrak{g}_1 \) is also an ideal in \( \mathfrak{h} \).
\end{theorem}

\begin{proof}
We write \( \mathfrak{n} \) and \( \mathfrak{s} \) for the nilradical and real-radical of \( \mathfrak{h} \).

First we are going to show that \( \mathfrak{n} \subseteq \mathfrak{g} \).
This implies immediately that \( \mathfrak{g} \) is an ideal by Remark \ref{rem:derivations-radicals}.
Then, since \( \mathfrak{g} \) is split-solvable by hypothesis, \( \mathfrak{g} \) is contained in \( \mathfrak{s} \).
The hypotheses and Lemma \ref{lem:torus-and-real-radical} imply that
\[
\dim(\mathfrak{h}) - \dim(\mathfrak{t})
= \dim(\mathfrak{g}) \leq \dim(\mathfrak{s})
\leq \dim(\mathfrak{h}) - \dim(\mathfrak{t}),
\]
so \( \mathfrak{g} = \mathfrak{s} \).

Since \( \mathfrak{h} = \mathfrak{g} \oplus \mathfrak{t} \supseteq \mathfrak{n} \oplus \mathfrak{t} \), there is a unique linear mapping \( \sigma : \mathfrak{n} \to  \mathfrak{t} \) defined by the condition that
\[
X + \sigma X  \in \mathfrak{g}
\qquad\forall X \in \mathfrak{n}.
\]
Define
\[
\tilde{\mathfrak{t}} \coloneqq  \sigma(\mathfrak{n}) ,
\qquad
\tilde{\mathfrak{n}} \coloneqq  \mathfrak{n} ,
\qquad
\tilde{\mathfrak{h}} \coloneqq  \mathfrak{n} \oplus \tilde{\mathfrak{t}},
\quad\text{and}\quad
\tilde{\mathfrak{g}} \coloneqq  \{ X + \sigma X : X \in \mathfrak{n} \}.
\]
Since \( \mathfrak{t} \) is abelian, \( \tilde{\mathfrak{t}} \) is a subalgebra of \( \mathfrak{t} \); by Remark \ref{rem:derivations-radicals}, \( \tilde{\mathfrak{h}} \) is an ideal in \( \mathfrak{h} \); and by linear algebra, \( \tilde{\mathfrak{g}} = \tilde{\mathfrak{h}} \cap \mathfrak{g} \); hence \( \tilde{\mathfrak{g}} \) is a subalgebra of \( \tilde{\mathfrak{h}} \) and  \( \tilde{\mathfrak{h}} = \tilde{\mathfrak{g}} \oplus \tilde{\mathfrak{t}} \).

Clearly, \( \tilde{\mathfrak{n}} \) is a nilpotent ideal in \( \tilde{\mathfrak{h}} \); if it were not the nilradical \( \nil(\tilde{\mathfrak{h}}) \) of \( \tilde{\mathfrak{h}} \), then it would be a subalgebra thereof, and there would be some nonzero element \( U \) of \( \tilde{\mathfrak{t}} \) in \( \nil(\tilde{\mathfrak{h}}) \).
Consider \( \ad(U) \) acting on \( \tilde{\mathfrak{h}} \); this is semisimple since \( \tilde{\mathfrak{t}} \) is toral, and nilpotent since \( U \in \nil(\tilde{\mathfrak{h}}) \), and hence \( \ad(U) \) annihilates \( \tilde{\mathfrak{h}} \).
Since \( U \in \tilde{\mathfrak{t}} \subseteq \mathfrak{t} \), \( \ad(U) \) annihilates \( \mathfrak{t} \) as \( \mathfrak{t} \) is abelian and \( \mathfrak{a} \) by the definition of \( \mathfrak{a} \).
Hence \( \ad(U) \) annihilates \( \mathfrak{h} \), that is, \( U \in Z(\mathfrak{h}) \cap \mathfrak{t} \). We conclude that \( U = 0 \) and hence \( \tilde{\mathfrak{n}} \) is also the nilradical of \( \tilde{\mathfrak{h}} \).

We fix \( X \in \tilde{\mathfrak{n}} \) and consider \( \ad(X+\sigma X ) \), acting on the complexified algebra \( (\tilde{\mathfrak{g}})_\C \); suppose that
\[
[ X + \sigma X , Y + \sigma Y ] = \lambda (Y + \sigma Y ),
\]
where \( Y \in \tilde{\mathfrak{n}}_{\C} \setminus \{0\} \) and \( \lambda \in \C \setminus \{0\} \).
On the one hand, since \( \mathfrak{g} \) is split-solvable, \( \lambda \) is real.
On the other hand, \( Y + \sigma Y  \in \tilde{\mathfrak{n}}_{\C} \) since \( \lambda \neq 0 \), and so \( \sigma Y  = 0 \).
Now \( Y \in (\tilde{\mathfrak{n}}_{\C})_{[j]} \setminus (\tilde{\mathfrak{n}}_{\C})_{[j+1]} \) (see \eqref{eq:def-lcc} for the definition of the lower central series) for some \( j \),
whence
\[
[\sigma X , Y  ] + (\tilde{\mathfrak{n}}_{\C})_{[j+1]}
= [X + \sigma X , Y  ] + (\tilde{\mathfrak{n}}_{\C})_{[j+1]}
= \lambda Y + (\tilde{\mathfrak{n}}_{\C})_{[j+1]},
\]
that is, \( \lambda \) is an eigenvalue of \( \ad(\sigma X ) \) acting on the quotient space \( \tilde{\mathfrak{n}}_{\C} /(\tilde{\mathfrak{n}}_{\C})_{[j+1]} \).
Since \( \ad(\sigma X ) \) has purely imaginary eigenvalues, \( \lambda \) is purely imaginary.

These conclusions are almost contradictory, and imply that all eigenvalues of \( \ad(X + \sigma X ) \), acting on \( (\tilde{\mathfrak{g}})_\C \), are \( 0 \), and \( \tilde{\mathfrak{g}} \) is nilpotent.

By Lemma \ref{lem:modifications-are-normal}, \( \tilde{\mathfrak{g}} \) is an ideal in \( \tilde{\mathfrak{h}} \); then \( \tilde{\mathfrak{g}}  \subseteq \tilde{\mathfrak{n}} \) as \( \tilde{\mathfrak{n}} \) is the largest nilpotent ideal in \( \tilde{\mathfrak{h}} \); for dimensional reasons, \( \tilde{\mathfrak{n}} = \tilde{\mathfrak{g}} \).
This completes the proof that \( \mathfrak{n} \subseteq \mathfrak{g} \).

Now suppose that \( \mathfrak{h} \) is a solvable Lie algebra with subalgebras \( \mathfrak{g} \), \( \mathfrak{g}_1 \), and \( \mathfrak{t} \) such that \( \mathfrak{g} \) is a split-solvable ideal, \( \mathfrak{t} \) is toral, and \( \mathfrak{h} = \mathfrak{g} \oplus \mathfrak{t} = \mathfrak{g}_1 \oplus \mathfrak{t} \); we shall prove that \( \mathfrak{g}_1 \)  is an ideal.

By the first part of this theorem, \( \mathfrak{n} \subseteq \mathfrak{g} \) and \( \mathfrak{g} \) is an ideal; now by Weyl's unitarian trick, we may write \( \mathfrak{g} = \mathfrak{n} \oplus \mathfrak{a} \), where \( [\mathfrak{t},\mathfrak{a}] = \{0\} \).
Much as before, there is a unique linear mapping \( \sigma: \mathfrak{g} \to \mathfrak{t} \) such that
\[
\mathfrak{g}_1 = \{ X + \sigma X : X \in \mathfrak{g} \}.
\]
As \( \mathfrak{g}_1 \) is a subalgebra of \( \mathfrak{h} \), \( \mathfrak{g}_1 \) is an ideal if and only if \( [\mathfrak{t}, \mathfrak{g}_1] \subseteq \mathfrak{g}_1 \).
Now \( [\mathfrak{t}, \sigma \mathfrak{g}] = \{0\} \), and \( \mathfrak{g} = \mathfrak{n} \oplus \mathfrak{a} \), where \( [\mathfrak{t},\mathfrak{a}] = \{0\} \), so
\begin{equation}\label{eq:same-commutators}
\begin{aligned}
[\mathfrak{t}, \mathfrak{g}_1]
&= \Span\{ [U, X + \sigma X ] : U \in \mathfrak{t}, X \in \mathfrak{g}\} \\
&= \Span\{ [U, X ] : U \in \mathfrak{t}, X \in \mathfrak{g}\}
= [\mathfrak{t}, \mathfrak{g}]
= [\mathfrak{t}, \mathfrak{n}],
\end{aligned}
\end{equation}
and hence \( \mathfrak{g}_1 \) is an ideal if and only if \( [\mathfrak{t}, \mathfrak{n}] \subseteq \mathfrak{g}_1 \).

Much as before, we
define
\[
\tilde{\mathfrak{t}} \coloneqq  \mathfrak{t},
\qquad
\tilde{\mathfrak{n}} \coloneqq  \mathfrak{n},
\qquad
\tilde{\mathfrak{h}} \coloneqq  \mathfrak{n} \oplus \mathfrak{t},
\quad\text{and}\quad
\tilde{\mathfrak{g}} \coloneqq  \{ X + \sigma X : X \in \mathfrak{n} \}.
\]
Clearly \( \tilde{\mathfrak{h}} \) is a subalgebra of \( \mathfrak{h} \) and \( \tilde{\mathfrak{h}} = \tilde{\mathfrak{g}} \oplus \tilde{\mathfrak{t}} \).
Further, \( [\mathfrak{t},\mathfrak{a}]=\{0\} \) and \( \mathfrak{h} = \mathfrak{a} \oplus \tilde{\mathfrak{h}} \), whence \( Z(\tilde{\mathfrak{h}}) \cap \tilde{\mathfrak{t}} = Z(\mathfrak{h}) \cap \mathfrak{t} = \{0\} \); moreover, \( [\tilde{\mathfrak{t}}, \tilde{\mathfrak{g}}] = [\tilde{\mathfrak{t}}, \tilde{\mathfrak{n}}] \) by the argument used to prove that \( [\mathfrak{t}, \mathfrak{g}_1] = [\mathfrak{t}, \mathfrak{g}] \) in \eqref{eq:same-commutators}.
From Lemma \ref{lem:modifications-are-normal}, \( \tilde{\mathfrak{g}} \) is an ideal in \( \tilde{\mathfrak{h}} \), and
\[
[\mathfrak{t}, \mathfrak{n}]
= [\tilde{\mathfrak{t}}, \tilde{\mathfrak{n}}]
= [\tilde{\mathfrak{t}}, \tilde{\mathfrak{g}}]
\subseteq \tilde{\mathfrak{g}}
\subseteq \mathfrak{g}_1 ,
\]
and hence \( \mathfrak{g}_1 \) is an ideal, as required.
\end{proof}

\begin{corollary} \label{cor:split-solvable-is-good-group}
Suppose that \( G \) is a split-solvable subgroup of a connected solvable Lie group \( H \) and \( T \) is a toral subgroup of \( H \) such that \( H = G \cdot T \) and \( Z(H) \cap T = \{e\} \).
Then \( G \) is normal in \( H \) and hence is the real-radical of \( H \).
If \( G_1 \) is a subgroup of \( H \) such that \( H = G_1 \cdot T \), then \( G_1 \) is also normal in \( H \).
\end{corollary}

\begin{proof}
We reduce this proof to the previous result by considering the Lie algebras of the various groups and subgroups.
The fact that the Lie algebra \( \mathfrak{g} \) of \( G \) is an ideal and coincides with \( \mathfrak{s} \) establishes that \( G \) is normal and the real-radical of \( H \).

Next, if \( G_1 \) satisfies the hypotheses of the theorem, then \( \mathfrak{h} \) is a solvable Lie algebra with subalgebras \( \mathfrak{g}_0 \), \( \tilde{\mathfrak{g}} \), and \( \mathfrak{t} \) such that \( \mathfrak{g}_0 \) is a split-solvable ideal, \( \mathfrak{t} \) is toral, \( \mathfrak{h} = \mathfrak{g}_0 + \mathfrak{g}_1 \),
and \( \mathfrak{h} = \mathfrak{g}_0 \oplus \mathfrak{t} = \mathfrak{g}_1 \oplus \mathfrak{t} \).
By the preceding theorem \( \mathfrak{g}_1 \)  is an ideal, and hence \( G_1 \) is normal in \( H \).
\end{proof}

We conclude with a result that shows that much of what we have done with solvable subgroups of the isometry group of a solvable metric Lie group may be extended to amenable subgroups of the isometry group.

\begin{theorem}
Let \( (G,d) \) be a split-solvable metric Lie group.
Suppose that  \( G \subseteq H \subseteq \Iso(G,d) \), and \( H \) is an amenable group.
Then \( G \) is the real-radical of \( H \), and \( H \) is of the form \( G \rtimes K \), where \( K \) is the stabiliser in \( H \) of the identity \( e \) of \( G \).
\end{theorem}

\begin{proof}
First, \( H = G \cdot K \) by Remark \ref{rem:more-on-G-dot-K}.
Next, since \( G \) is connected and the real-radical of \( H \) coincides with the real-radical of the connected component of the identity in \( H \), there is no loss of generality in supposing that \( H \) is connected, and we assume this throughout this proof.
We use Lie algebra; let \( \mathfrak{r} \), \( \mathfrak{s} \) and \( \mathfrak{n} \) be the radical, the real-radical and the nilradical of \( \mathfrak{h} \).
It will suffice to show that \( \mathfrak{g} = \mathfrak{s} \).

Let \( \pi \) be the canonical projection of \( \mathfrak{h} \) onto the Levi factor \( \dot{\mathfrak{h}} \coloneqq  \mathfrak{h}/\mathfrak{r} \), which is a compact Lie algebra since \( H \) is amenable.
Since \( \mathfrak{g} \) is solvable, so is \( \dot{\mathfrak{g}} \coloneqq  \pi(\mathfrak{g}) \), and since \( \dot{\mathfrak{h}} \) is compact, \( \dot{\mathfrak{g}} \) lies in a maximal torus of \( \dot{\mathfrak{h}} \).
Similarly \( \dot{\mathfrak{k}} \coloneqq  \pi(\mathfrak{k}) \) is a compact subalgebra of \( \dot{\mathfrak{h}} \).
Evidently, \( \dot{\mathfrak{h}} = \dot{\mathfrak{g}} + \dot{\mathfrak{k}} \), and so  \( \dot{\mathfrak{k}} = \dot{\mathfrak{h}} \) by Lemma \ref{lem:compact-semisimple-algebras}.

Thus we may take a compact Levi subgroup \( L \) of \( H \) such that \( \mathfrak{l} \subseteq \mathfrak{k} \), and a maximal torus \( \mathfrak{t} \) of \( \mathfrak{l} \) such that \( \mathfrak{g} \subseteq \mathfrak{r} \oplus \mathfrak{t} \).
Write \( \mathfrak{k}_R \coloneqq  \mathfrak{k} \cap \mathfrak{r} \); then by Lemma \ref{lem:Levi-and-max-cpct}, we see that \( \mathfrak{k} = \mathfrak{k}_R \oplus \mathfrak{l} \) and \( \mathfrak{k}_R \) and \( \mathfrak{l} \) commute; further, \( \mathfrak{k}_R \oplus \mathfrak{t} \) is a maximal torus of \( \mathfrak{k} \).

The first step is to show that \( \mathfrak{s} \subseteq \mathfrak{g} \).
To do this, observe that \( \mathfrak{s} \subseteq \mathfrak{r} \), and so \( \mathfrak{g} + \mathfrak{s} \) is a subalgebra of the solvable subalgebra \( \mathfrak{r} \oplus \mathfrak{t} \) of \( \mathfrak{h} \).
As \( \mathfrak{h} = \mathfrak{g} \oplus \mathfrak{k} \), there is a subalgebra \( \mathfrak{k}_0 \) of \( \mathfrak{k} \), necessarily a torus, such that \( \mathfrak{g} + \mathfrak{s} = \mathfrak{g} \oplus \mathfrak{k}_0 \).
By Theorem \ref{thm:split-solvable-is-good-algebra}, \( \mathfrak{g} \) is an ideal in \( \mathfrak{g} \oplus \mathfrak{k}_0 \).
Now both \( \mathfrak{g} \) and \( \mathfrak{s} \) are split-solvable ideals in \( \mathfrak{r} \oplus \mathfrak{t} \), and so by Theorem \ref{thm:rradical}, \( \mathfrak{g} + \mathfrak{s} \) is also split-solvable.
For \( X \in \mathfrak{g} + \mathfrak{s} \), the eigenvalues of \( \ad(X) \), acting on \( \mathfrak{g} + \mathfrak{s} \), are all real; \emph{a fortiori}, the eigenvalues of \( \ad(X) \), acting on \( \mathfrak{n} \), are all real.
If \( \mathfrak{g} \neq \mathfrak{g} + \mathfrak{s} \), there would be a nonzero element \( Z \) of \( \mathfrak{k} \cap (\mathfrak{g} + \mathfrak{s}) \), and the eigenvalues of \( \ad(Z) \), acting on \( \mathfrak{n} \), would be both real and purely imaginary, and hence \( 0 \).
Since \( \ad(Z) \mathfrak{r} \subseteq \mathfrak{n} \), the eigenvalues of \( \ad(Z) \), acting on \( \mathfrak{r} \), are also \( 0 \).
As \( Z \in \mathfrak{k} \), \( \ad(Z) \) is semisimple, and so \( \ad(Z) \mathfrak{r} = \{0\} \).
Moreover, since \( Z \in \mathfrak{r} \oplus \mathfrak{t} \) and \( \mathfrak{k}_R \) commutes with \( \mathfrak{l} \), it follows that \( \ad(Z) \), acting on \( \mathfrak{r} \oplus \mathfrak{t} \), is trivial.
This implies that \( \exp(tZ) \) commutes with \( G \) for all \( t \in \R \), which is absurd, since \( K \) is a group of nontrivial isometries of \( G \), by Remark
\ref{rem:no-compact-normal-subgroups}.
The impossibility of this shows that \( \mathfrak{s} \subseteq \mathfrak{g} \).

The second step is to show that \( \mathfrak{s} = \mathfrak{g} \).
To do this, we use Weyl's unitarian trick to decompose \( \mathfrak{r} \) into two \( \ad(\mathfrak{k}) \)-invariant subspaces, that is, write \( \mathfrak{r} = \mathfrak{s} \oplus \mathfrak{a} \), and hence
\[
\mathfrak{h} = \mathfrak{s} \oplus \mathfrak{a} \oplus \mathfrak{l} ,
\]
where each subspace is \( \ad(\mathfrak{k}) \)-invariant.
If \( X \in \mathfrak{g} \cap (\mathfrak{a} \oplus \mathfrak{l}) \), then \( X \in \mathfrak{g} \cap (\mathfrak{a} \oplus \mathfrak{t}) \), and we show that \( X = 0 \) as follows: we write \( X = U + V \), where \( U \in \mathfrak{a} \) and \( V \in \mathfrak{t} \), and take \( k_1 \), \dots, \( k_J \) in \( L \) as described in Lemma \ref{lem:Weyl-group}.
Note that
\[
\Ad(k_j)(U+V) = U + \Ad(k_j)V \in \mathfrak{r} \oplus \mathfrak{t},
\]
and if \( [U + V, W] = \lambda W \) for some \( \lambda \in \R \), then
\[
[ \Ad(k_j)(U + V) , \Ad(k_j)W]  = \lambda \Ad(k_j)W,
\]
and it follows from Lemmas \ref{lem:Weyl-group} and \ref{lem:Jablonski-sum-lemma} that
\( J\ad(U) = \ad\left( \sum_{j} \Ad(k_j)(U + V) \right) \), acting in the solvable algebra \( \mathfrak{r} \oplus \mathfrak{t} \), has real eigenvalues.
Since \( U \in \mathfrak{r} \), \( [\mathfrak{r}, U] \subseteq \mathfrak{n} \), and by construction, \( [\mathfrak{k}, U] = \{0\} \), whence \( \mathfrak{s} \oplus \R U \) is a split-solvable ideal in \( \mathfrak{h} \).
By the maximality of \( \mathfrak{s} \), \( U = 0 \).

At this point, \( V \in \mathfrak{l} \cap \mathfrak{g} \), and so all the eigenvalues of \( \ad(V) \), acting on \( \mathfrak{n} \), are both purely real and pure imaginary.
Consequently, much as argued for \( Z \) above, \( V = 0 \).
\end{proof}

\begin{corollary}[{After Wolf \cite{Wolf-Growth}}]\label{cor:After-Wolf}
Suppose that \( (G,d) \) is a nilpotent metric Lie group.
Then \( G \) is normal in \( \Iso(G,d) \) and is the nilradical of \( \Iso(G,d) \); further, \( \Iso(G,d) \) is of the form \( G \rtimes K \), where \( K \) is the stabiliser in \( \Iso(G,d) \) of the identity \( e \) of \( G \).
\end{corollary}

\begin{proof}
If \( G \) is nilpotent, then \( G \) is of polynomial growth, and so is \( \Iso(G,d) \).
It follows that \( \Iso(G,d) \) is amenable, and the previous result applies.
\end{proof}

\subsection{Twisted versions of groups and isometry of solvable groups}\label{ssec:twists}
We begin by recalling some results from Chapter \ref{sec:prel} and an observation that arises from the work of Alekseevski\u\i\ \cite{Alekseevskii}.

If a connected Lie group \( G_0 \) acts simply transitively and isometrically on a metric manifold \( (M,d) \) and \( H \) is an isometry group of \( (M,d) \) containing \( G_0 \), then we may write \( H = G_0 \cdot K \), where \( K \) is the stabiliser in \( H \) of a base point in \( M \), and the condition \( \bigcap_{h \in H} (hKh^{-1}) = \{e_H\} \) holds.
We suppose that \( G_0 \) is normal in \( H \).
If \( G_1 \) is also contained in \( H \) and acts simply transitively and isometrically on \( M \), then \( H = G_1 \cdot K \).
Hence there is a continuous bijection \( \Tau: G_1 \to G_0 \) and a continuous map \( \Phi: G_1 \to K \) such that \( g = \Tau(g) \Phi(g) \) for all \( g \in G_1 \).
We check that
\[
\Tau(gh) \Phi(gh) = gh = \Tau(g) \Phi(g) \Tau(h) \Phi(h) = \Tau(g)  \Tau(h)^{\Phi(g)} \Phi(g) \Phi(h)
\]
for all \( g, h \in G_1 \), where \( \Tau(h)^{\Phi(g)} \coloneqq  \Phi(g)\Tau(h)\Phi(g)^{-1} \); thus \( \Phi \) is a continuous homomorphism and \( \Tau(g) = g \Phi(g)^{-1} \), so both maps are smooth; further,
\[
\Tau(gh) = \Tau(g)  \Tau(h)^{\Phi(g)},
\]
and \( \Tau \) is a twisted homomorphism or cocycle.
Further, \( G_0 = \{ g \Phi(g)^{-1} : g \in G_1\} \), as \( \Tau \) is a bijection.
We summarise this discussion in the following definition and lemma.

\begin{definition}\label{def:twist}
We write \( G_1 \) is a \introd{twisted version} of \( G_0 \) to mean that there exists a Lie group \( H \), containing \( G_0 \) and \( G_1 \) as closed subgroups, with \( G_0 \) normal, and a Lie group homomorphism \( \Phi: G_1 \to K \), where \( K \) is a compact subgroup of \( H \), such that \( H = G_1 \cdot K \) and \( G_0 = \{ g \Phi(g)^{-1} : g \in G_1 \} \).
In this case, we also say that \( \Phi \) is the \introd{twisting homomorphism}.
\end{definition}

\begin{example}
Let \( H \) denote the Lie group \( (\R^2 \rtimes \mathrm{SO}(2) ) \times \R \), and define closed subgroups \( G_0 \coloneqq  (\R^2 \rtimes \{0\} ) \times \R \) and \( G_1 \coloneqq  \{ (x,y,[\alpha],\alpha) : x,y,\alpha \in \R \} \), where \(  [\alpha] \) denotes the equivalence class of \( \alpha \) in \( \mathrm{SO}(2) \), which we may identify with \( \R/ 2\pi\Z \).
Now both \( G_0 \) and \( G_1 \) are normal subgroups of \( H \).
Define the subgroup \( K \) to be \( \{(0,0)\} \times \mathrm{SO}(2) \times \{0\} \) and the homomorphism \( \Phi \colon G_1 \to K \) by \( (x,y,[\alpha],\alpha) \mapsto [\alpha] \).
Then \( \{ g \Phi(g)^{-1} : g \in G_1 \} = G_0 \), and hence \( G_1  \) is a twisted version of \( G_0 \).
In this case, \( G_0 \) is also a twisted version of \( G_1 \), via the twisting homomorphism \( \Phi' : G_0 \to K \) given by \( (x,y,0,\alpha) \mapsto -[\alpha] \).
Thus the semi-direct product \( \R^2 \rtimes \R \), where \( \R \) acts on \( \R^2 \) by rotations (embedded as \( G_1 \)), and the direct product \( \R^3 \) (embedded as \( G_0 \)), are twisted versions of each other.
\end{example}

Note that if \( G_1 \) is connected and solvable, then the closure \( (\Phi(G_1))\bar{\phantom{x}} \) is connected, solvable and compact, and so is a torus; we often write \( T \) instead of \( K \) in this case.
This remark leads to our next result.

\begin{lemma}\label{lem:twist}
Let \( (G_0,d) \) be a solvable metric Lie group, \( H \) be the connected component of the identity in \( \Iso(G_0,d) \), \( K \) be the stabiliser in \( H \) of the point \( e \) in \( G \), and \( T \) be a maximal torus of \( K \).
Suppose that \( G_0 \) is normal in \( H \).

Then, for a connected solvable Lie group \( G \), the following are equivalent:
\begin{enumerate}
  \item \( G \) may be made isometric to \( (G_0,d) \);
  \item \( G \) may be embedded in \( G_0 \rtimes T \) in such a way that \( G \cdot T = G_0 \rtimes T \); and
  \item \( G \) is a twisted version of \( G_0 \) with a twisting homomorphism \( \Phi: G \to T \).
\end{enumerate}
\end{lemma}

\begin{proof}
We recall that maximal tori of compact Lie groups are conjugate; hence the group \( G_0 \rtimes T \) does not depend on the choice of \( T \), up to isomorphism.

Suppose that \( G \) may be made isometric to \( (G_0,d) \).
From Corollary \ref{cor:isometric-groups-complementary-subgroups}, there is an embedding of \( G \) into \( H \) such that \( H = G \cdot K = G_0 \cdot K \), and \( G_0 \cdot K = G_0 \rtimes K \) by assumption.
The closure of the image of \( G \) in the quotient \( (G_0 \rtimes K)/G_0 \), which is isomorphic to \( K \), is solvable, connected, and compact, hence a torus, and so contained in a maximal torus.
This implies that \( G \cdot T = G_0 \rtimes T \).

Conversely, if we may embed \( G \) into \( G_0 \rtimes T \) in such a way that \( G_0 \rtimes T = G \cdot T \), then we may embed \( G \) into \( G_0 \rtimes K \), and it may be checked that \( G_0 \rtimes K = G \cdot K \); again from Corollary \ref{cor:isometric-groups-complementary-subgroups}, \( G \) may be made isometric to \( (G_0,d_0) \).

The equivalence of (ii) and (iii) follows from the discussion preceding Definition \ref{def:twist}.
\end{proof}

In our situation, where we have solvable subgroups \( G_1 \) and \( G_2 \) of an isometry group \( H \) that we want to show are algebraically similar, it would seem to be desirable to have \( G_1 \) and \( G_2 \)  normal in \( H \), and a way to try to do this is to make \( H \) as small as possible.
Our first two lemmas show that \( H \) may be taken to be solvable.

\begin{proposition}\label{prop:reduction-to-solvable}
Suppose that \( H \) is a connected Lie group with a connected compact subgroup \( K \) and  a connected solvable subgroup \( G \) of \( H \) such that \( H = G \cdot K \).
Let \( H_0 \) be a maximal connected solvable subgroup of \( H \) that contains \( G \).
Then
\begin{enumerate}
  \item \( H_0 \) is unique up to conjugation in \( H \);
  \item \( T \coloneqq  H_0 \cap K \) is a torus, and \( H_0 = G \cdot T \); and
  \item if \( G_1 \) is a connected solvable subgroup of \( H \) such that \( H = G_1 \cdot K \), then there is a conjugate \( G_1^h \) of \( G_1 \) in \( H \) contained in \( H_0 \) such that \( H_0 = G_1^h \cdot T \).
\end{enumerate}
If moreover \( H \) acts effectively on \( H/K \), then \( H_0 \) acts effectively on \( H_0 /T \).
\end{proposition}

\begin{proof}
As usual, denote by \( \mathfrak{h} \) the Lie algebra of \( H \), by \( \mathfrak{k} \) the Lie algebra of \( K \), and so on.

By hypothesis, \( \mathfrak{h} = \mathfrak{g} \oplus \mathfrak{k} \), and \emph{a fortiori} \( \mathfrak{h} = \mathfrak{h}_0 + \mathfrak{k} \).
If \( H_1 \) is a maximal connected solvable subgroup of \( H \) that contains \( G \), then \( \mathfrak{h} = \mathfrak{h}_1 + \mathfrak{k} \), and by Lemma \ref{lem:maximal-solvable}, \( \mathfrak{h}_1 \) is conjugate to \( \mathfrak{h}_0 \) under the action of the adjoint group of \( \mathfrak{h} \), whence \( H_1 \) is conjugate to \( H_0 \) in \( H \), and (i) holds.

Consider the action of \( H_0 \) on the quotient space \( H/K \).
Since \( G \) acts transitively, \( H_0 \) does so, and the stabiliser in \( H_0 \) of the point \( K \) in the quotient space \( H/K \) is \( H_0 \cap K \).
Now \( H = G\cdot K \), so that \( H_0 = G\cdot (H_0\cap K) = G \cdot T \) (by definition).
Further, \( T \) is connected because \( H_0 \) is connected and \( H_0 = G \cdot T \), solvable because \( H_0 \) solvable, and compact because it is a closed subgroup of \( K \).
Hence \( T \) is a torus.

If \( G_1 \) is a connected solvable subgroup of \( H \) such that \( H = G_1 \cdot K \), then \( \mathfrak{h} = \mathfrak{g}_1 + \mathfrak{k}  \).
If \( \mathfrak{h}' \) is a maximal solvable subalgebra of \( \mathfrak{h} \) that contains \( \mathfrak{g}_1 \), then \( \mathfrak{h} = \mathfrak{h}' + \mathfrak{k} \), and there exists \( h \in H \) such that \( \mathfrak{h}_0 = \Ad(h) \mathfrak{h}' \).
It follows that \( \Ad(h) \mathfrak{g}_1 \subseteq \mathfrak{h}_0 \), and it follows that \( hG_1h^{-1} \subseteq H_0 \) and \( H_0 = hG_1h^{-1} \cdot T \).

Finally if \( H \) acts effectively on \( H/K \), then so does the subgroup \( H_0 \), and \( H/K \) may be identified with \( H_0/T \).
\end{proof}

Let \( G_1 \) and \( G_2 \) be connected solvable Lie groups, and suppose that \( H \) is a solvable Lie group with a toral subgroup \( T \) such that \( H = G_1 \cdot T = G_2 \cdot T \) and \( Z(H) \cap T = \{e\} \).
Ideally, we would like to deduce that \( G_1 \) is a twisted version of \( G_2 \), or vice versa, but unfortunately this is not quite true; however, from Lemma \ref{lem:existence-of-G0-1}, there is a subgroup \( G_0 \) of \( H \) such that \( H = G_0 \rtimes T = G_1 \cdot T = G_2 \cdot T \), and hence both \( G_1 \) and \( G_2 \) are twisted versions of \( G_0 \).
In the proof of Lemma \ref{lem:existence-of-G0-1}, there were many possible choices for \( G_0 \), and it might be hoped that there is a choice with some additional properties that are useful and make it unique.

For instance, suppose that \( H \) is of polynomial growth.
One might hope that \( G_0 \) is nilpotent, but this is not always so.
However, one may define an abelian extension \( H^* \) of \( H \), in which \( H \) is a normal subgroup, with a toral subgroup \( T^* \) containing \( T \), such that \( H^* = G_1 \cdot T^* = G_2 \cdot T^* \), whose nilradical \( N \) satisfies \( H^* = N \rtimes T^* \).
Then \( G_1 \) and \( G_2 \) are both twisted versions of \( N \), which is known as the nilshadow of both \( G_1 \) and \( G_2 \).
We shall describe a construction of the group \( H^* \) like that of Alexopoulos \cite{Alexopoulos}, Dungey, ter Elst and Robinson \cite{Dungey-Elst-Robinson}, and Breuillard \cite{Breuillard-Geometry}, and show that one choice of \( G_0 \) is the real-radical of \( H^* \).

\subsection{Hulls and real-shadows}\label{ssec:hull-shadow}
In this section, we sketch the proof of the following theorem, whose roots are in results of Cornulier \cite[Section 2]{Cornulier-dimension} and of Jablonski \cite[Proposition 4.2]{Jablonski}, as well as earlier results of Gordon and Wilson \cite{Gordon-Wilson-fine, Gordon-Wilson-solvmanifolds} and even earlier work of Auslander and Green \cite{Auslander-Green}.

\begin{theorem}\label{thm:hull}
Let \( G \) be a connected solvable Lie group.
Let \( T \) be a maximal torus in a maximal compact subgroup of the automorphism group of \( G \), let \( H \) be the semidirect product \( G \rtimes T \), and let \( G_0 \) be the real-radical of \( H \).
Then \( H = G_0 \rtimes T \); further, there is a smallest subtorus \( J \) of \( T \), unique up to isomorphism, such that
\begin{equation}\label{eq:hull}
G \rtimes J = G_0 \rtimes J;
\end{equation}
\( G \) is a twisted version of \( G_0 \), with twisting homomorphism into \( J \), and vice versa.
\end{theorem}

\begin{proof}
Maximal compact subgroups of \( \Aut(G) \) are connected and conjugate in \( \Aut(G) \), and maximal tori of a given maximal compact subgroup \( K \) are conjugate in \( K \).
Hence \( H \) is unique up to isomorphism, and so \( G_0 \) is too.

We now show that \( H = G_0 \rtimes T \), using Lie algebra.
We choose a maximal torus with some convenient properties.
Let \( \mathfrak{g} \) and \( \mathfrak{n} \) be the Lie algebra of \( G \) and its nilradical.
Take a Cartan subalgebra \( \mathfrak{c} \) (see \cite[pp.~13--15]{Bourbaki7-9}) of \( \mathfrak{g} \).
The quotient \( (\mathfrak{n} + \mathfrak{c}) / \mathfrak{n} \) is a Cartan subalgebra of the abelian Lie algebra \( \mathfrak{g} / \mathfrak{n} \), by \cite[Corollary 2, page 14]{Bourbaki7-9}; hence \( \mathfrak{n} + \mathfrak{c} = \mathfrak{g} \).
Hence we may take a subspace \( \mathfrak{a} \) of \( \mathfrak{c} \) such that
\begin{equation}\label{eq:excellent-decomposition}
\mathfrak{g} = \mathfrak{n} \oplus \mathfrak{a}.
\end{equation}
Denote by \( \pi_{\mathfrak{a}} \) and \( \pi_{\mathfrak{n}} \) the corresponding projections of \( \mathfrak{g} \) onto \( \mathfrak{a} \) and \( \mathfrak{n} \).
Then
\begin{enumerate}\renewcommand{\labelenumi}{(\alph{enumi})}
  \item \( \Sad(X) Y = 0 \) and \( [\Sad(X) ,\Sad(Y)] = 0 \) for all \( X, Y \in \mathfrak{c} \); and
  \item the map \( X \mapsto \Sad( \pi_{\mathfrak{a}} X) \) is a Lie algebra homomorphism from \( \mathfrak{g} \) onto an abelian subalgebra of \( \Der(\mathfrak{g}) \), the Lie algebra of derivations of \( \mathfrak{g} \).
\end{enumerate}
Item (a) is proved as part of the proof of Proposition III.1.1 of \cite{Dungey-Elst-Robinson}; item (b) is Lemma 3.1 of \cite{Breuillard-Geometry}.
(To be precise, these authors have a type (R) assumption, but, as they state, this is not needed.)

We define the homomorphism \( \phi: \mathfrak{g} \to \Der(\mathfrak{g}) \) by
\[
\phi(X) \coloneqq  \SIad(\pi_{\mathfrak{a}} X),
\]
that is, the ``imaginary part'' (as in Lemma \ref{lem:shadow-step-1}) of the semisimple derivation \( \Sad(\pi_{\mathfrak{a}} X) \) constructed above.
This homomorphism annihilates \( \mathfrak{n} \), and also \( \mathfrak{s} \), and its image is a toral subalgebra of \( \Der(\mathfrak{g}) \).
Consider the closure \( J \) in \( \Aut(\mathfrak{g}) \) of the analytic subgroup corres\-ponding to \( \phi(\mathfrak{a}) \).
Then \( J \) is a torus.
(It is an abuse of notation to call this torus \( J \), but we shall later check that it does satisfy \eqref{eq:hull}, and so the abuse is justified.)

Let \( T \) be a maximal torus of \( \Aut(\mathfrak{g}) \) that contains \( J \), and define the Lie algebra \( \mathfrak{h} \) to be the semidirect sum algebra \(  \mathfrak{g} \oplus \mathfrak{t} \),
with Lie product given by
\begin{equation}\label{eq:hull-product}
  [(X, D), (Y, E)] \coloneqq  ([X,Y] + D(Y) - E(X), 0 )
\end{equation}
for all \( X, Y \in \mathfrak{g} \)  and all \( D, E \in \mathfrak{t} \).
In this proof, we write elements of \( \mathfrak{h} \) as ordered pairs rather than as sums as we feel that this helps understanding.
The subspace \( \mathfrak{g}_0 \) of \( \mathfrak{g} \) is defined by
\begin{align*}
  \mathfrak{g}_0  \coloneqq  \{(X, -\phi(X)) : X \in \mathfrak{g} \} .
\end{align*}
(Again, we are abusing notation here, but proving the next claim justifies the abuse.)
We claim that
\begin{enumerate}\renewcommand{\labelenumi}{\textrm{(\alph{enumi})}}
  \item \( \mathfrak{h} = \mathfrak{g}_0 \oplus \mathfrak{t} \);
  \item the map \( \tau: X \mapsto (X, -\phi(X)) \) is a bijection from
  \( \mathfrak{g} \) to \( \mathfrak{g}_0 \), and further,
\[
[\tau(X), \tau(Y)] =  \tau([X,Y]_{\rrad}),
\]
where
\begin{equation}\label{eq:shadow-product}
  [X,Y]_{\rrad} = [X,Y] - \phi(X)Y + \phi(Y)X
  \qquad\forall X, Y \in \mathfrak{g};
\end{equation}
  \item \( \mathfrak{g}_0 \) is an ideal and is the real-radical of \( \mathfrak{h} \).
\end{enumerate}

Parts (a) and (b) follow immediately from the definitions.

Third, \( \mathfrak{g}_0 \) is an ideal since
\( [\mathfrak{h}, \mathfrak{h}] \subseteq \mathfrak{n} \oplus \{0\} \subseteq \mathfrak{g}_0 \),
by \eqref{eq:hull-product} and Remark \ref{rem:derivations-radicals}.
To see that \( \mathfrak{g}_0 \) is split-solvable, we suppose that
\( X \in \mathfrak{g} \), \( Y \in \mathfrak{g}_{\C} \setminus \{0\} \),
and
\[
([X,Y] - \phi(X)Y + \phi(Y) X, 0)
=  [(X, -\phi(X)), (Y, -\phi(Y))]
=  \lambda (Y, -\phi(Y));
\]
it will suffice to show that \( \lambda \) is real.
If \( \lambda \neq 0 \), then \( \phi(Y) = 0 \), so we may suppose that \( Y \in \mathfrak{n}_{\C} \),
whence, from \eqref{eq:hull-product},
\( \ad(X) Y - \phi(X) Y = \lambda Y \), which implies that
\begin{align*}
(\SRad(\pi_{\mathfrak{a}} X) + \Nad(\pi_{\mathfrak{a}} X) + \ad(\pi_{\mathfrak{n}} X))Y
& = (\ad(\pi_{\mathfrak{a}} X) + \ad(\pi_{\mathfrak{n}} X) - \SIad(\pi_{\mathfrak{a}} X))Y \\
& =  \lambda Y .
\end{align*}
Consider the lower central series of \( \mathfrak{n}_{\C} \), as in  \eqref{eq:def-lcc}.
Since \( Y \neq 0 \), there exists \( j\in \N \) such that
\( Y \in (\mathfrak{n}_{\C})_{[j]} \setminus  (\mathfrak{n}_{\C})_{[j+1]} \).
Now all \( (\mathfrak{n}_{\C})_{[j]} \) are invariant under all derivations of \( \mathfrak{n}_{\C} \), and in particular under \( \SRad(\pi_{\mathfrak{a}} X) \), \( \Nad(\pi_{\mathfrak{a}} X) \) and \( \ad(\pi_{\mathfrak{n}} X)) \).
Thus these operators have quotient actions on the quotient algebra
\( (\mathfrak{n}_{\C})_{[j]} / (\mathfrak{n}_{\C})_{[j+1]} \), which we write as \( \SRqad(\pi_{\mathfrak{a}} X) \),
\( \Nqad(\pi_{\mathfrak{a}} X) \) and \( \qad(\pi_{\mathfrak{n}} X) \).
Evidently, \( \qad(\pi_{\mathfrak{n}} X))= 0 \),  \( \SRqad(\pi_{\mathfrak{a}} X) \) is semisimple with real eigenvalues, \( \Nqad(\pi_{\mathfrak{a}} X) \) is nilpotent; further, the last two quotient operators commute.
The eigenvalues of \( \SRqad(\pi_{\mathfrak{a}} X) \) and of
\( \SRqad(\pi_{\mathfrak{a}} X) + \Nqad(\pi_{\mathfrak{a}} X) \) coincide by \cite[Theorem 1, p.~A.VII.43]{Bourbaki-Alg2}.
So all the eigenvalues of \( \ad(X, -\phi(X)) \) are real, and \( \mathfrak{g}_0 \) is indeed split-solvable.

From Theorem \ref{thm:split-solvable-is-good-algebra}, \( \mathfrak{g}_0 \) is the real-radical of \( \mathfrak{h} \).

Next, we consider the groups that correspond to these Lie algebras.
We have already seen that \( T \) is a torus.
By Theorem \ref{thm:rradical}, the connected analytic subgroup \( G_0 \) of \( H \) whose Lie algebra is \( \mathfrak{g}_0 \) is closed and normal, and by Lemma \ref{lem:Lie-algebra-criterion}, \( H = G_0T \).
If \( g \in G_0 \cap T \), then the eigenvalues of \( \Ad(g) \), acting on \( \mathfrak{g}_0 \), are both real since \( G_0 \) is split-solvable, and of modulus \( 1 \), since \( T \) is a torus, and hence are all \( 1 \).
Since \( T \) is a torus, \( \Ad(g) \) is semisimple, and so \( \Ad(g) \), acting on \( \mathfrak{g}_0 \), is the identity mapping.
As \( T \) is abelian and \( \mathfrak{h} = \mathfrak{g}_0 \oplus \mathfrak{t} \), \( \Ad(g) \) acts trivially on \( \mathfrak{h} \).
It follows that \( g \in Z(H) \).
However, \( H = G \rtimes T \) and \( T \) is a torus of automorphisms of \( G \), so if \( g \in T \cap Z(H) \), then \( g = e \).
Thus \( G_0 \cap T \) is trivial and \( H = G \rtimes T = G_0 \rtimes T \).

We organised matters so that
\( \mathfrak{h} = \mathfrak{g} \oplus \mathfrak{t} = \mathfrak{g}_0 \oplus \mathfrak{t} \);
however, by construction,
\( \mathfrak{g} \oplus \mathfrak{j} = \mathfrak{g}_0 \oplus \mathfrak{j} \),
where \( \mathfrak{j} \) is the Lie algebra of \( J \), and there is no proper subtorus of \( J \) whose Lie algebra has this property.
At group level, \( H/G \) may be identified with \( T \) in \( \Aut(G) \) and \( J \) is the smallest subtorus of \( T \) that may be identified with the closure of \( G_0 G /G \) therein.
Thus
\[
G \rtimes J = G_0 \rtimes J,
\]
as required.
We have seen that \( H \) and hence \( G_0 \) are unique up to isomorphism: it follows that \( J \) is too.
\end{proof}

\begin{definition}\label{def:hull-and-shadow}
The \introd{real-shadow} of \( G \) is the group \( G_0 \) of Theorem \ref{thm:hull}, which is the real-radical of both \( G \rtimes J \) and of \( G \rtimes T \) (by Corollary \ref{cor:split-solvable-is-good-group} and by definition).
The \introd{hull} of \( G \) is defined to be the group \( G \rtimes J = G_0 \rtimes J \), which is the smallest solvable group containing both \( G \) and \( G_0 \).
The corresponding Lie algebras are also called the real-shadow and hull of \( \mathfrak{g} \).
\end{definition}

\begin{remark}\label{rem:shadows}
Let \( G_0 \) and \( G \) be connected solvable Lie groups, with \( G_0 \) split-solvable.

Let \( K \) be a torus of automorphisms of \( G \) and define \( H \coloneqq G \rtimes K \).
Suppose that we may write \( H = G_0 \cdot K \).
Then \( G_0 \) is normal in \( H \), by Corollary \ref{cor:split-solvable-is-good-group}; thus \( G \rtimes K = G_0 \rtimes K \).
By assumption, \( Z(H) \cap K = \{e\} \), so there is no nontrivial element of \( K \) that acts trivially on \( G_0 \).
We may suppose that \( K \) is a subtorus of the maximal torus \( T \) of automorphisms of \( G \) that appears in Theorem \ref{thm:hull}, and then \( J \subseteq K \subseteq T \).
Therefore \( G_0 \) is the real-radical of \( G \rtimes J \), and \( G_0 \) is the real-shadow of \( G \).

Let \( K \) be a torus of automorphisms of \( G_0 \) and define \( H \coloneqq  G_0 \rtimes K \).
Suppose that we may write \( H = G \cdot K \).
Let \( J \) be the smallest subtorus of \( K \) such that \( G \subseteq G_0 \rtimes J \); then \( G \) is normal in \( G_0 \rtimes J \) by Corollary \ref{cor:split-solvable-is-good-group}, and \( G_0 \rtimes J = G\rtimes J \).
Therefore \( G_0 \) is the real-shadow of \( G \).
\end{remark}

\begin{remark}\label{rem:shadows-2}
If \( G \) is split-solvable, then it is isomorphic to its real-shadow.
If \( G \) is of polynomial growth, then its real-shadow coincides with its nilshadow, since in this case the real-radical and the nilradical of the hull are the same.
\end{remark}

\subsection{Applications to metric Lie groups}
\label{ssec:consequences-thm-3}

Now we look at some of the consequences of the theory that we have developed, not only in Chapter \ref{sec:solvable}, but also earlier.

We recall that a connected solvable Lie group is simply connected if and only if it is contractible.
Thus a Lie group that may be made isometric to a connected simply connected solvable Lie group  is contractible.
By Remark \ref{rem:contractible-Lie-groups}, a contractible Lie group \( G \) may be written as \( R \rtimes L \), where the radical \( R \) is simply connected and the Levi subgroup \( L \) is a direct product of finitely many (possibly zero) copies of the universal covering group of \( \mathrm{SL}(2, \R) \).
Conversely, Theorem \ref{thm:main-2} shows that a Lie group \( G \) with the structure just described may be made isometric to a solvable Lie group.
By contrast, if a Lie group \( G \) may be made isometric to a connected simply connected nilpotent Lie group, then \( G \) is contractible and of polynomial growth, and by Lemma \ref{lem-poly-growth}, \( G \) is solvable.

These observations are the underlying reason for the inclusion of a solvability hypothesis in many but not all of the upcoming results.

\begin{corollary}\label{cor:may-be-made-isometric}
Let \( (G,d) \) be a connected solvable metric Lie group, \( H \) be a maximal connected solvable subgroup of \( \Iso(G,d) \) containing \( G \), and \( T \) be the stabiliser in \( H \) of the point \( e_G \) in \( G \).
Let \( G_0 \) be a normal subgroup of \( H \) such that \( H = G_0 \rtimes T \), as in Lemma \ref{lem:existence-of-G0-1}, and let \( G_1 \) be a connected solvable Lie group.
Then the following are equivalent:
\begin{enumerate}
  \item \( G_1 \) may be made isometric to \( (G,d) \);
  \item \( G_1 \) may be embedded in \( H \) in such a way that \( H = G_1 \cdot T \); and
  \item \( G \) and \( G_1 \) are both twisted versions of \( G_0 \) with twisting homomorphisms into \( T \).
\end{enumerate}
\end{corollary}

\begin{proof}
If \( G_1 \) may be made isometric to \( (G,d) \), then there is an embedding of \( G_1 \) in \( \Iso(G,d) \), by Theorem \ref{thm:isometry-isomorphism}, hence an embedding of \( G_1 \) in \( H \) by Proposition \ref{prop:reduction-to-solvable}, and so \( H \) contains closed subgroups \( G_1 \) and \( T \) such that \( H = G_1 \cdot T \).
Conversely, if \( G_1 \) may be embedded in \( H \) in such a way that \( H = G_1 \cdot T \), then \( G_1 \) may be made isometric to \( (G,d) \) by Corollary \ref{cor:isometric-groups-complementary-subgroups}.

The equivalence of (ii) and (iii) follows from Theorem \ref{thm:hull}.
\end{proof}

\begin{corollary}\label{cor:hull-and-isometry}
Let \( (G,d) \) be a connected solvable metric Lie group.
Let \( G^* \) and \( G_0 \) be the hull and the real-shadow of \( G \), and write \( G^* = G \rtimes J \), as in Theorem \ref{thm:hull}.
Then the following are equivalent:
\begin{enumerate}
  \item \( G_0 \) may be made isometric to \( (G,d) \); and
  \item \( d \) is invariant under conjugation by elements of \( J \).
\end{enumerate}
\end{corollary}

\begin{proof}
If \( d(jgj^{-1}, jhj^{-1}) = d(g,h) \) for all \( g, h \in G \) and all \( j \in J \), then we may view \( d \) as a \( G^* \)-invariant metric on \( G^*/J \), hence as a \( G_0 \)-invariant metric on \( G^*/J \), and hence as a metric on \( G_0 \).

Conversely, if \( G_0 \) may be made isometric to \( (G,d) \), then we may embed \( G \) and \( G_0 \) into a maximal connected solvable subgroup \( H \) of \( \Iso(G,d) \), by Proposition \ref{prop:reduction-to-solvable}, and write \( H = G_0 \cdot T = G \cdot T \)
for a suitable torus \( T \).
By Corollary \ref{cor:split-solvable-is-good-group}, \( H = G_0 \rtimes T \).
We may take a smaller subgroup \( H_0 \) of \( H \) of the form \( G_0 \rtimes J \), where \( J \) is a subtorus of \( T \), that is minimal subject to the requirement that \( G \subseteq H_0 \), and then, by Remark \ref{rem:shadows}, \( G \) is normal in \( H_0 \), and \( H_0 \) and \( G_0 \) are the hull and real-shadow of \( G \).
We may identify \( G \) with \( H_0/J \) and the metric \( d \) is \( H_0 \)-invariant, and \emph{a fortiori} is \( J \)-invariant.
\end{proof}

{
We now restate (and expand slightly) Theorem C.
}

\begin{theorem}\label{thm:main-3}
Let \( G_0 \) be a connected split-solvable Lie group, \( T \) be a maximal torus in \( \Aut(G_0) \), and \( d_0 \) be a \( T \)-invariant metric on \( G_0 \).
Let \( G_1 \) be a connected solvable Lie group.
Then the following are equivalent:
\begin{enumerate}
  \item \( G_1 \) may be made isometric to \( G_0 \);
  \item \( G_1 \) may be made isometric to \( (G_0,d_0) \);
  \item \( G_0 \) is the real-shadow of \( G_1 \);
  \item \( G_1 \) may be embedded in \( H \coloneqq  G_0 \rtimes T \) in such a way that \( H = G_1 \cdot T \); and
  \item \( G_1 \) is a twisted version of \( G_0 \) with twisting homomorphism into \( T \).
\end{enumerate}
\end{theorem}

\begin{proof}
Before we start our proof, we note that the existence of a \( T \)-invariant metric \( d_0 \) on \( G_0 \) is shown in Corollary \ref{cor:H/K-is-metrisable}.

It follows from Theorem \ref{thm:hull} and Remark \ref{rem:shadows} that (iii) and (iv) are equivalent, and from the discussion preceding Definition \ref{def:twist} that (iv) and (v) are equivalent.
Further, it is immediate that (ii) implies (i).

Suppose that (i) holds; then there exists a metric \( d \) on \( G_0 \) such that
\( G_1 \) may be viewed as a closed subgroup of \( \Iso(G_0,d) \) that acts simply transitively on \( G_0 \), as in Theorem \ref{thm:isometry-isomorphism}.
By Proposition \ref{prop:reduction-to-solvable}, there exists a connected solvable subgroup \( H \) of \( \Iso(G_0,d) \) such that \( H = G_0 \rtimes K = G_1 \cdot K \), where \( K \) is the stabiliser in \( H \) of \( e \) in \( G_0 \); since \( H \) is connected and solvable, so is \( K \) and since \( K \) is also compact, \( K \) is a torus.
Now (iii) holds, by Remark \ref{rem:shadows}.

To complete the proof, suppose that (iv) holds.
Then it is immediate that \( G_1 \) may be viewed as a closed subgroup of \( \Iso(G_0,d_0) \) that acts simply transitively on \( G_0 \), as in Theorem \ref{thm:isometry-isomorphism}, and so (ii) holds.
\end{proof}

The following are corollaries of Theorem \ref{thm:main-3} and the theory that we have developed.
This first follows immediately from the riemannian version of Corollary \ref{cor:H/K-is-metrisable} (which is well known) and  Theorem \ref{thm:main-3}

\begin{corollary}\label{cor:preferred-metric-on-split-solvable}
Let \( G_0 \) be a connected split-solvable Lie group.
Then there exists a riemannian metric \( d_0 \) on \( G_0 \) such that every connected solvable Lie group that may be made isometric to \( G_0 \) may be made isometric to \( (G_0, d_0) \).
\end{corollary}

Part of the next corollary also follows immediately from Theorem \ref{thm:main-3}.

\begin{corollary}
Let \( G_1 \) and \( G_2 \) be connected solvable Lie groups.
Then \( G_1 \) and \( G_2 \) may be made isometric if and only if their real-shadows are isomorphic.
\end{corollary}

\begin{proof}
First, suppose that \( G_0 \) is the real-shadow of both \( G_1 \) and \( G_2 \), and take a metric \( d_0 \) on the real-shadow \( G_0 \) that is invariant under a maximal torus \( T \) of \( \Aut(G_0) \).
Then both \( G_1 \) and \( G_2 \) may be made isometric to \( (G_0, d_0) \).

Conversely, suppose that \( G_1 \) and \( G_2 \) are connected solvable Lie groups that admit admissible left-invariant metrics \( d_1 \) and \( d_2 \) such that \( (G_1,d_1) \) and \( (G_2, d_2) \) are isometric.
Let \( H \) be a maximal connected solvable subgroup of the Lie group \( \Iso(G_1, d_1) \), and \( T \) be the stabiliser of the identity \( e \) of \( G_1 \) in \( H \).
By Corollary \ref{cor:may-be-made-isometric}, there is a normal subgroup \( G \) of \( H \) such that
\[
H = G \rtimes T = G_1 \cdot T = G_2 \cdot T.
\]

Let \( T^* \) be a maximal torus of \( \Aut(G) \) that contains \( T \), and let \( G_0 \) be the real-radical of \( H^* \coloneqq  G \rtimes T^* \), so that \( H^* = G_0 \rtimes T^* \) by Theorem \ref{thm:hull}.
Now \( G_1 \subseteq H_0 \subseteq H^*  \) and \( G_2 \subseteq H^* \) similarly.
We may check that \( G_1 \) and \( G_2 \) act simply transitively on \( H^*/T^* \), whence \( H^* \coloneqq  G_1 \cdot T^* = G_2 \cdot T^* \).
By Theorem \ref{thm:main-3}, \( G_0 \) is the real-shadow of both \( G_1 \) and \( G_2 \).
\end{proof}

Of course, if \( G_1 \) and \( G_2 \) have the same real-shadow \( G_0 \), then not only may they be made isometric to \( G_0 \), but to \( (G_0,d_0) \), where \( d_0 \) is the metric of Corollary \ref{cor:preferred-metric-on-split-solvable}.

We have already observed that in the nilpotent case, stronger results are possible.

\begin{corollary}
Let \( G_1 \) and \( G_2 \) be connected Lie groups and assume that \( G_1 \) is nilpotent.
The following are equivalent:
\begin{enumerate}
\item  \( G_2 \) and \( G_1 \) may be made isometric;
\item  \( G_2 \) is solvable and of polynomial growth and \( G_1 \) is its nilshadow.
\end{enumerate}
\end{corollary}

\begin{proof}
From Theorem \ref{thm:main-3},  (i) and (ii) are equivalent if \( G_2 \) is solvable; hence it suffices to assume that (i) holds and show that \( G_2 \) is solvable.

Since \( G_1 \) is nilpotent, it is of polynomial growth; hence \( G_2 \) is also of polynomial growth by Lemma \ref{lem03171107}.
Further, \( G_1 \) and \( G_2 \)  are homeomorphic, so their universal covering groups \( \tilde G_1 \) and \( \tilde G_2 \) are also homeomorphic.
As \( \tilde G_1 \) is contractible, \( \tilde G_2 \) is too.
We know that \( \tilde G_2 \) is of the form \( R \rtimes L \), where \( R \) is solvable and \( L \) is compact semisimple; it follows that \( L \) is trivial and so \( \tilde G_2 \) and \( G_2 \) are solvable.
%
%
%Further, \( G_1 \) and \( G_2 \) are homeomorphic, so their maximal compact subgroups are homotopic by Lemma \ref{lem:Iwasawa-max-cpct}.
%A maximal compact subgroup of \( G_1 \) is a torus, by Lemma \ref{lem-poly-growth}, so a maximal compact subgroup of \( G_2 \) is homotopic to a torus.
%Since the homotopy groups of a compact Lie group determine the Lie group up to local isomorphism (see \cite{Boekholt}), a maximal compact subgroup of \( G_2 \) is abelian, and so is also a torus.
%Hence \( G_2 \) is solvable by Lemma \ref{lem-poly-growth}.
\end{proof}

This leads to the following, which should be compared to a result of Kivioja and Le Donne \cite{Kivioja-LeDonne}.

\begin{corollary}\label{cor:Wilson-Enrico-Ville}
If \( G_1 \) and \( G_2 \) are connected nilpotent Lie groups, and both may be made isometric to the same connected Lie group \( G \) (not \emph{a priori} solvable, and possibly with different metrics), then \( G \) is solvable and \( G_1 \) and \( G_2 \) are isomorphic.
\end{corollary}

\begin{proof}
By the previous corollary, \( G \) is solvable and of polynomial growth, and both \( G_1 \) and \( G_2 \) are isomorphic to the nilshadow of \( G \).
\end{proof}

In the preceding corollary, if ``nilpotent'' is replaced with ``split-solvable'', we cannot deduce that \( G \) must be solvable.
However, if we replace ``nilpotent'' with ``split-solvable'' and we assume \emph{a priori} that \( G \) is solvable, then the conclusion that \( G_1 \) and \( G_2 \) are isomorphic still holds, as they are both isomorphic to the real-shadow of \( G \).

There are examples in the work of Gordon and Wilson \cite{Gordon-Wilson-solvmanifolds, Gordon-Wilson-fine} and of Jablonski \cite{Jablonski} where stronger results hold for split-solvable groups if an \emph{a priori} assumption of unimodularity is included.

Our final corollaries are concerned with quasi-isometry rather than isometry.
A general observation is that if two Lie groups may be made isometric using arbitrary admissible left-invariant metrics, then they may be made isometric for the derived semi-intrinsic metrics of \eqref{eq:derived-metric}, or for suitable riemannian metrics, as in Corollary \ref{cor:iso-of-two-manifolds-is-Lie-Riemannian}, and hence they are quasi-isometric when equipped with admissible left-invariant proper quasigeodesic metrics, as all such metrics on a given group are quasi-isometric.
We recall from Theorem \ref{thm:main-2} that a contractible homogeneous metric manifold \( (M,d) \) is homeomorphically roughly isometric to a connected, simply connected solvable metric Lie group.
With an additional hypothesis of polynomial growth, more may be said.

\begin{corollary}\label{cor04112351}
Let \( (M, d) \) be a contractible homogeneous metric manifold.
Suppose further that \( d \) is proper quasigeodesic and that \( M \) is of polynomial growth, as in \eqref{eq:poly-growth-space}.
Then \( (M, d) \) is quasi-isometrically homeomorphic to a simply connected riemannian nilpotent Lie group.
\end{corollary}

\begin{proof}
Theorem \ref{thm:main-2} shows that \( (M, d) \) is roughly isometrically homeomorphic to a simply connected solvable metric Lie group \( (H, d_H) \); by construction, \( (H, d_H) \) is proper quasigeodesic.

Let \( N \) be the nilshadow of \( H \).
By Theorem~\ref{thm:main-3}, there are metrics \( d_H' \) and \( d_N' \) on \( H \) and \( N \) such that \( (H, d_H') \) and \( (N, d_N') \) are isometric.
We may assume that \( d_H' \) and \( d_N' \) are riemannian, by Corollary \ref{cor:preferred-metric-on-split-solvable}.

Finally, \( d \) is {proper quasigeodesic} and all admissible left-invariant {proper quasigeodesic} metrics on a Lie group are quasi-isometric, so the identity map on \( H \)  is a quasi-isometry from \( d_H \) to \( d_H' \).
\end{proof}

With a slightly weaker hypothesis, we obtain a slightly weaker conclusion.

\begin{corollary}\label{cor04112356}
Let \( (M, d) \) be a homogeneous metric space of polynomial growth, and suppose that the metric \( d \) is {proper quasigeodesic}.
Then \( (M, d) \) is quasi-isometric to a connected simply connected nilpotent riemannian Lie group.
\end{corollary}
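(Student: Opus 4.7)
The approach is to reduce $(M,d)$ to a simply connected solvable Lie group via Theorem~\ref{thm-1}(b) and then apply Corollary~\ref{cor04112351}. By Theorem~\ref{thm-1}(b), there is a simply connected solvable metric Lie group $(J, d_J)$ that is $(1, C)$-quasi-isometric to $(M, d)$. The underlying space $J$ is a contractible manifold, $(J, d_J)$ is homogeneous, and $d_J$ is quasigeodesic since $d$ is and this property is a quasi-isometry invariant. It remains to verify that $(J, d_J)$ has polynomial growth as a metric space.

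Since $(M, d)$ is proper by Remark~\ref{rem04111303} and is quasigeodesic of polynomial growth, it is doubling at large scale by~\cite{2017arXiv170206618C}. Doubling at large scale is a quasi-isometry invariant, so $(J, d_J)$ is doubling at large scale as well. Pick any left-invariant Riemannian metric $\rho_J$ on $J$; both $d_J$ and $\rho_J$ are admissible, left-invariant, and quasigeodesic on $J$, so by~\cite{MR3153949} the identity map $(J, d_J) \to (J, \rho_J)$ is a quasi-isometry. Properness of $\rho_J$ then transfers to $d_J$, since closed $d_J$-balls sit inside closed $\rho_J$-balls of suitably larger radius. Applying~\cite{2017arXiv170206618C} once more, $(J, d_J)$ has polynomial growth.

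Corollary~\ref{cor04112351} now applies to $(J, d_J)$ and produces a quasi-isometric homeomorphism to a simply connected nilpotent Riemannian Lie group $(N, d_N)$; composing with the $(1, C)$-quasi-isometry $(M, d) \to (J, d_J)$ gives the claimed quasi-isometry. The main technical difficulty is transferring polynomial growth from $(M, d)$ to $(J, d_J)$, which cannot be done directly since polynomial growth is not a quasi-isometry invariant; the argument instead routes through doubling at large scale and properness, which together are equivalent to polynomial growth for proper quasigeodesic homogeneous spaces.
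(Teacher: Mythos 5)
Your overall strategy (reduce to a simply connected solvable metric Lie group via Theorem~\ref{thm-1}(b) and invoke Corollary~\ref{cor04112351}) is reasonable, and you correctly identify the crux: polynomial growth is not a quasi-isometry invariant, so it must be transported to $(J,d_J)$ by some other means. But your way of doing this is circular. To convert large-scale doubling of $(J,d_J)$ back into polynomial growth you need $(J,d_J)$ to be \emph{proper}, and you obtain properness of $d_J$ by first applying the comparison result of \cite{MR3153949} to $d_J$ and a Riemannian metric $\rho_J$. That result is stated for \emph{proper} admissible left-invariant quasigeodesic distance functions, and properness cannot be dropped: the distance $d(x,y)=\min\{|x-y|,1\}$ on $\R$ is admissible, left-invariant and quasigeodesic, yet it is not proper and not quasi-isometric to the Euclidean metric. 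So you are using the conclusion of the comparison to establish the hypothesis (properness of $d_J$) that the comparison requires. Nor can properness be transported from $(M,d)$ through the black-box quasi-isometry of Theorem~\ref{thm-1}(b), since properness is not a quasi-isometry invariant either (the same example is $(1,1)$-quasi-isometric to a point).

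The gap can be closed, but not for free: you would have to open up the construction of $J$ in Theorems~\ref{thm-1} and~\ref{teo04231712} and check that each intermediate $(1,C)$-quasi-isometry (quotients by compact isometry groups, the orbit map $g\mapsto go$ for the Busemann metric, cocompact closed embeddings) carries properness, or better, carries polynomial growth directly. The paper does the latter and avoids Theorem~\ref{thm-1}(b) altogether: it passes to the identity component $G$ of $\Iso(M,d)$ equipped with a Busemann distance, and then to $M'=G/K_0$ for a maximal compact subgroup $K_0$, transferring polynomial growth along these two specific $(1,C)$-quasi-isometries by means of the compatibility \eqref{eq03162337} of the Haar measure on $G$ with the invariant measures on $M$ and $M'$; since the relevant balls are nested up to an additive constant, the measure identity converts the growth bound directly. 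Corollary~\ref{cor04112351} is then applied to $(M',d')$. Your argument becomes correct once the circular properness step is replaced by one of these routes.
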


{
\begin{proof}
Theorem \ref{thm:main-2} shows that \( (M, d) \) is roughly isometric to a simply connected solvable metric Lie group \( (H, d_H) \), which is a metric quotient of \( (N,d) \) with compact fibre, and hence also of polynomial growth.

We now repeat the argument of the previous corollary.
\end{proof}

If \( (M,d) \) is a homogeneous metric space of polynomial growth, then the argument above shows that there is an admissible metric \( d' \) on \( M \), such that \( (M,d') \) is of polynomial growth and quasi-isometric to a connected simply connected nilpotent riemannian Lie group.
For example, we may take \( d' \) to be a derived semi-intrinsic metric, as defined just before Lemma \ref{lem:Busemann-gauge}.
}

\subsection{Notes and remarks}\label{sec:history}

\subsubsection*{4.3.\enspace Modifications}
In the language of Gordon and Wilson \cite{Gordon-Wilson-solvmanifolds}, our Lemma \ref{lem:modifications-are-normal} states that modifications of nilpotent Lie ideals are normal.
Gordon and Wilson \cite{Gordon-Wilson-solvmanifolds} proved the stronger result that modifications of nilpotent subalgebras are normal.
However, our Theorem \ref{thm:split-solvable-is-good-algebra} shows that nilpotent subalgebras of solvable Lie algebras with a toral complement are ideals, and so our two results combined include their theorem.

\subsubsection*{4.4.\enspace Split-solvability and the real-radical}

The real-radical, at the Lie algebra level, appears in the work of Jablonski \cite{Jablonski}.
In particular, the Lie algebra part of Theorem \ref{thm:rradical} and Theorem \ref{thm:split-solvable-is-good-algebra} are due to him.
In the language of Gordon and Wilson \cite{Gordon-Wilson-solvmanifolds}, \cite{Gordon-Wilson-fine}, the second part of Theorem \ref{thm:split-solvable-is-good-algebra} states that modifications of split-solvable groups are normal.

It was shown by Wolf \cite{Wolf-Growth} that a connected riemannian nilpotent group is normal in its isometry group.
On the other hand, the examples of symmetric spaces of the noncompact type show that a riemannian split-solvable connected Lie group \( G \) need not be normal in its isometry group \( H \); we may write \( H = G \cdot K \), where \( K \) is the stabiliser of a base-point, but it is certainly false that \( H = G \rtimes K \).
So Theorem \ref{thm:split-solvable-is-good-algebra} and Corollary \ref{cor:split-solvable-is-good-group} are perhaps a little surprising.

One important way in which our approach differs from that of Gordon and Wilson is that we use Mostow's theorem \cite{Mostow-subgroups} on maximal solvable subgroups to reduce questions of possible isometry of solvable groups to questions of possible isometry of solvable groups in a solvable supergroup.
This enables us to avoid some of the complications that arise in dealing with general Lie groups and algebras.

\subsubsection*{4.5.\enspace Twisted versions of groups and isometry of solvable groups}
Definition \ref{def:twist} is close to a proposal of Alekseevski\u\i\ \cite{Alekseevskii}, who used the expression \emph{twisting} rather than twisted version (or rather his translator did).
Actually, he considered the related question whether \( \{g \Phi(g)^{-1}: g \in G_1\} \) is a subgroup if \( G_1 \) is normal and \( \Phi: G_1 \to K \) is a homomorphism.
His answer is not definitive, but the situation is now clearer due to the contributions of Gordon and Wilson \cite{Gordon-Wilson-fine, Gordon-Wilson-solvmanifolds}, who looked at the corresponding question at the Lie algebra level, namely, when \( \{ X + \phi(X) : X \in \mathfrak{g}_1 \} \) is a subalgebra.

\subsubsection*{4.6.\enspace Hulls and real-shadows}

The idea of using a Cartan subalgebra of \( \mathfrak{g} \) to find a good complement of \( \nil(\mathfrak{g}) \), as in Theorem \ref{thm:hull}, or to construct the nilshadow, seems to be due to Alexseevski\u\i.
However, his class of solvable groups is restricted to those which arise in the study of riemannian homogeneous spaces of nonpositive curvature, and for these groups, the Cartan subalgebra \( \mathfrak{a} \) is abelian; extra ideas are needed to deal with general solvable Lie groups.
These are due to Alexopoulos (in the polynomial growth case).

The following example shows that not all the Cartan subalgebras that appear in the ``shadow construction'' are abelian.
We take the Lie algebra \( \mathfrak{g} \) with basis \( \{ U, V, X, Y, Z\} \) and commutation relations determined by linearity, antisymmetry and the nonzero basis commutation relations
\[
[X,Y] = Z, \qquad [X,U] = U, \qquad [Y,V] = V.
\]
This is a solvable extension of the abelian algebra \( \Span\{U,V\} \) by the nilpotent algebra \( \Span\{X, Y, Z\} \).
The Cartan subalgebra \( \Span\{X, Y, Z\} \) is nilpotent and not abelian.

The nilshadow appears in work of Auslander and Green \cite{Auslander-Green}, where the group \( G^* \) is called the \emph{hull} of \( G \); it seems that the term nilshadow was first used in \cite{Auslander-Tolimieri}.
Interestingly, it seems that type (R) also appeared for the first time in \cite{Auslander-Green}.
Their construction of the nilshadow used ideas from the theory of algebraic groups.
An alternative construction of the nilshadow, based on Lie algebras, appears in the work of Gordon and Wilson \cite{Gordon-Wilson-fine, Gordon-Wilson-solvmanifolds}, phrased in the language of modifications; their work was not restricted to groups of polynomial growth, and perhaps for this reason they did not make explicit the connection with the construction of Auslander and Green.
The Lie algebraic construction of the nilshadow was found later by Alexopoulos \cite{Alexopoulos}, and developed by Dungey, ter Elst, and Robinson \cite{Dungey-Elst-Robinson} and by Breuillard \cite{Breuillard-Geometry}.
The nilshadow has been used quite extensively in the area of harmonic analysis on Lie groups, and in applications to nonriemannian metric geometry of Lie groups.

What we call the real-shadow is more recent.
For groups that need not be of polynomial growth, the detailed investigation of Gordon and Wilson \cite{Gordon-Wilson-fine, Gordon-Wilson-solvmanifolds} identified a special subgroup \( G_0 \), said to be in standard position, that is sometimes split-solvable.
Cornulier \cite{Cornulier-dimension} developed an object that he called the trigshadow, using techniques closer to those of Auslander and Green, and in particular working at group level rather than algebra level.
In the recent work of Jablonski \cite{Jablonski}, which has roots in the work of Gordon and Wilson, the idea of a maximal split-solvable ideal appears and the real-shadow as viewed as a maximal split-solvable ideal of a larger Lie algebra.

Our construction of the hull \( G^* \) is like that of Alexopoulos, Dungey, ter Elst and Robinson, and of Breuillard.

Recall from Lemma \ref{lem:existence-of-G0-1} that if \( H \) is a solvable Lie group with a toral subgroup \( T \) such that \( Z(H) \cap T = \{e\} \), then we may find a normal subgroup \( G_0 \) of \( H \) such that \( H = G_0 \rtimes T \).
Gordon and Wilson \cite{Gordon-Wilson-fine, Gordon-Wilson-solvmanifolds} spend some effort on finding a choice of \( G_0 \) ``in standard position''.
Essentially this is a group which is ``as real as possible''.
From our point of view, the construction of \( G_0 \) proceeds, using Lie algebras, as follows: first, take a Cartan subalgebra \( \mathfrak{c} \) of \( \mathfrak{h} \) containing \( \mathfrak{t} \) (this is possible), and then a subspace \( \mathfrak{a} \) of \( \mathfrak{c} \) such that \( \mathfrak{h} = \mathfrak{n} \oplus \mathfrak{a} \oplus \mathfrak{t} \).
Replace any \( X \in \mathfrak{a} \) such that \( \SIad(X) \), the imaginary part of the semisimple part of \( \ad(X) \), as in Corollary \ref{cor:super-Jordan}, coincides with \( \ad(U) \) for some \( U \) in \( \mathfrak{t} \) by \( X-U \).
This produces a new subspace \( \tilde{\mathfrak{a}} \) such that
\( \mathfrak{h} = \mathfrak{n} \oplus \tilde{\mathfrak{a}} \oplus \mathfrak{t} \).
Let \( \mathfrak{g}_0 \) be \( \mathfrak{n} \oplus \tilde{\mathfrak{a}} \).

Gordon and Wilson often use the Killing form to construct nice subalgebras, such as the nilshadow of a solvable Lie algebra, and orthogonal complements of compact subalgebras appear in their development, much as in Corollary \ref{lem:amenable-contractible}.

\subsubsection*{4.7.\enspace Consequences and applications}
Gordon and Wilson \cite[Example 2.8]{Gordon-Wilson-solvmanifolds} give examples of nonisomorphic connected simply connected solvable Lie groups \( G_1 \) and \( G_2 \) that are isometric, but they are not isometric to their real-shadow.

The universal covering group \( H \) of the group \( \R^2 \rtimes \textrm{SO}(2) \) of rigid motions of \( \R^2 \) that preserve orientation is a simply connected three-dimensional solvable Lie group that admits a left-invariant subriemannian metric \( d \) such that \( (H, d) \) is not bi-Lipschitz equivalent to any nilpotent group.
Indeed, the two simply connected three-dimensional nilpotent Lie groups are the abelian group \( \R^3 \), which is the nilshadow of \( H \), and the nonabelian Heisenberg group \( \mathbb H \).
However, if \( d \) is a suitable left-invariant subriemannian metric on \( H \), then \( (H, d) \) is not even quasiconformally equivalent to either \( \R^3 \) or \( \mathbb H \); see \cite{Fassler-Koskela-LeDonne}.
Nevertheless, \( (H, d) \) is locally bi-Lipschitz to \( \mathbb H \) with the standard subriemannian metric.

Apropos of Theorem \ref{thm:split-solvable-is-good-algebra} and Corollary \ref{cor:split-solvable-is-good-group}, in the riemannian case, the normality of a nilpotent Lie group \( N \) in its isometry group was proved by Wolf \cite{Wolf-Growth} and rediscovered by Wilson \cite{Wilson-homogeneous}.

In the special case where \( (M,d) \) is of polynomial growth, so is every group \( G \) that acts simply transitively and isometrically on \( (M,d) \).
If any such group \( G \) is nilpotent, then \( G \) is normal in \( \Iso(M,d) \) by Corollary \ref{cor:After-Wolf}.
This was extended to unimodular split-solvable groups by Gordon and Wilson \cite{Gordon-Wilson-fine, Gordon-Wilson-solvmanifolds}.

Corollary \ref{cor:Wilson-Enrico-Ville} was known for nilpotent \( G \) and arbitrary metrics, and for solvable \( G \) with riemannian metrics; see \cite{Alekseevskii, Gordon-Wilson-fine, Gordon-Wilson-solvmanifolds, Kivioja-LeDonne, Wilson-homogeneous, Wolf-Growth}.
Kivioja and Le Donne also showed that isometries of nilpotent metric Lie groups are affine, that is, are composed of translations and group automorphisms.

\section{Characterisation of metrically self-similar Lie groups}\label{sec:dil}

In this chapter we present a study of homogeneous metric spaces that admit a (non-trivial) metric dilation.
Recall that a \introd{metric dilation} of \emph{factor} \( \lambda \) in a metric space \( (M, d) \) is a bijection \( \delta \) of \( M \)
such that \( d(\delta x, \delta y) = \lambda d(x,y) \) for all \( x,y \in M \).
Note that dilations of factor 1 are isometries; since we are interested in the case when \( \delta \) is not an isometry, we will always assume that \( \lambda\neq1 \), unless otherwise stated.

Theorem~D, which appears here as Theorem \ref{thm:main-4}, characterises all homogeneous metric spaces that admit a metric dilation as metrically self-similar Lie groups.

\begin{definition}
A \introd{metrically self-similar Lie group} is a triple \( (G,d,\delta) \), where \( G \) is a connected Lie group, \( d \) is an admissible left-invariant metric on \( G \), and \( \delta \) is an automorphism of \( G \) such that \( d(\delta x,\delta y)=\lambda d(x,y) \) for some \( \lambda\neq1 \).
\end{definition}

In the rest of this chapter, we will give a more precise description of metrically self-similar Lie groups in Section \ref{ssec:self-sim} and some motivation for their study.
In particular, we will show how metrically self-similar Lie groups appear
as tangents to doubling spaces in Section \ref{ssec:tangents} and as parabolic boundary of Heintze groups \ref{ssec:Heintze}.
Finally, we prove Theorem~D in Section~\ref{ssec:proof-thm-main-4}.

%%%%%%%%%%%%%%%%%%%%%%%%%%%%%%%%%%%%%%%%%%%%%%%%%%%%%%%%%%%%%%%
\subsection{Properties of self-similar Lie groups}\label{ssec:self-sim}
The basic examples of metrically self-similar Lie groups are finite dimensional normed vector spaces, where the dilation is scalar multiplication.
Several other examples are already available when \( G=\R^2 \).

If \( \alpha,\beta > 1 \), then the automorphisms \( \delta_\lambda \) corresponding to the matrix
\begin{equation}\label{eq60893540}
\begin{pmatrix} \lambda^\alpha & 0 \\ 0 & \lambda^\beta \end{pmatrix}
\end{equation}
are all dilations of factor \( \lambda \) for metrics including
\[
d((x,y),(x',y')) = \max\{|x-x'|^{1/\alpha},|y-y'|^{1/\beta}\}
\]
or, when \( \alpha=\beta \),
\[
d(x,y) = \|x-y \|^{1/\alpha}
\]
where \( \|\cdot\| \) is any norm on \( \R^2 \).
In \cite[Proposition 5.1]{LeDonne-Nicolussi-TAMS}, it is shown that when \( \alpha = \beta = 2 \),  there exists a homogeneous metric \( d \) whose spheres are fractals in \( \R^2 \).

When \( \alpha > 1 \), the automorphisms \( \delta_\lambda \) corresponding to the matrix
\begin{equation}\label{eq6089354f}
\lambda^\alpha
\begin{pmatrix}
\cos(\log\lambda) & -\sin(\log\lambda)\\
\sin(\log\lambda) & \cos(\log\lambda)
\end{pmatrix}
\end{equation}
are dilations of factor \( \lambda \) for the metric \(  d(x,y)=\|x-y\|^{1/\alpha} \), where \( \|\cdot\| \) is the euclidean norm.

If \( \alpha>1 \), then there is a left-invariant metric \( d \) on \( \R^2 \) for which the automorphisms \( \delta_\lambda \), corresponding to the matrices
\begin{equation}\label{eq6089355f}
\begin{pmatrix}
\lambda^\alpha & \lambda^\alpha\log(\lambda^\alpha) \\
0 &\lambda^\alpha
\end{pmatrix},
\end{equation}
are dilations of factor \( \lambda \).
These dilations appear in \cite{Bonk-Kleiner} in the study of visual boundaries of Gromov hyperbolic spaces.
See also \cite{Xie} for further results and examples in \( \R^n \).

\begin{definition}
A (positive) \introd{grading} of a Lie algebra \( \mathfrak{g} \) is a decomposition \( \mathfrak{g} = \bigoplus_{t \in \R^+}\mathfrak{v}_t \) such that \( [\mathfrak{v}_s,\mathfrak{v}_t]\subseteq \mathfrak{v}_{s+t} \) for all \( s,t \in \R^+ \).
A Lie group \( G \) is \emph{gradable} if it is simply connected and its Lie algebra admits a grading.
\end{definition}

Note that finitely many \( \mathfrak{v}_t \) have positive dimension, because \( \mathfrak{g} \) has finite dimension; further, a gradable group is nilpotent.
When \( G \) is a gradable Lie group with Lie algebra grading \( \mathfrak{g} = \bigoplus_{t \in \R^+}\mathfrak{v}_t \), we may define the \emph{standard dilations} \(  \delta_\lambda: G\to G \) by requiring that the differential \( (\delta_\lambda)_* \) acts as the scalar \( \lambda^t \) on \( \mathfrak{v}_t \).
It is known that, for standard dilations, a metric \( d \) exists on \( G \) so that \( (G, d, \delta_\lambda) \) is a metrically self-similar Lie group if and only if \( \mathfrak{v}_t=\{0\} \) for all \(  t \in (0,1) \), see \cite{Folland-Stein-Hardy}.
For much more on gradable groups, see \cite{LeDonne-Rigot-Besicovitch} and the references cited there.

Gradable groups are the only Lie groups that admit an automorphic dilation, by the following theorem.

\begin{theorem}[{Siebert, \cite{Siebert}}]\label{thm:Siebert}
Let \( G \) be a connected Lie group and suppose that there exists a Lie group automorphism  \( \delta : G\to G \) such that
\begin{equation}\label{eq60893e07}
\lim_{n\to\pinfty} \delta^ng=e_G
\qquad\forall g \in G.
\end{equation}
Then \( G \) is gradable, nilpotent and simply connected.
\end{theorem}

\begin{corollary}\label{cor:Siebert-plus}
If \( (G,d,\delta) \) is a metrically self-similar Lie group, then \( G \) is gradable, nilpotent and simply connected.
Moreover, \emph{all} metric dilations on \( (G,d) \) are Lie group automorphisms of \( G \).
\end{corollary}
\begin{proof}
Since a metrically self-similar Lie group admits a contractive automorphism by definition, the first statement follows from Theorem~\ref{thm:Siebert}.

If \( f:G\to G \) is a metric dilation of factor \( \mu>0 \), then it is also an isometry from \( (G,\mu d) \) to \( (G,d) \), and by \cite[Proposition 2.4]{Kivioja-LeDonne}, isometries between connected nilpotent Lie groups are group isomorphisms composed with translations.
\end{proof}

The proof of Theorem~\ref{thm:Siebert} constructs a grading of \( \mathfrak g \) in terms of the generalised eigenspaces of the automorphism \( \delta \).
If \( \delta \) is a metric dilation with factor \( \lambda<1 \), then the contraction property~\eqref{eq60893e07} is clearly satisfied.
\emph{Vice versa}, if we fix \( \delta \) and \( \lambda \), the following proposition gives a necessary and sufficient condition for the existence of a distance such that \( \delta \) is a dilation of factor \( \lambda \).
It follows from Proposition~\ref{prop12061651} that if \( \delta \) satisfies~\eqref{eq60893e07}, then there is a distance for which \( \delta \) is a dilation of some factor \( \lambda \).

\begin{proposition}[{\cite[Theorem D]{Golo-LeDonne}}]\label{prop12061651}
Suppose that \( G \) is a Lie group, \( \delta \) is a Lie group automorphism of \( G \) and \( 1\neq \lambda\in\R^+ \).
The following statements are equivalent:
\begin{enumerate}
\item
there is an admissible distance on \( G \) for which \( \delta \) is a dilation of factor \( \lambda \), and
\item
the Lie group \( G \) is connected and simply connected, the eigenvalues of \( \delta_* \) have modulus no greater than \( \lambda \) if \( \lambda<1 \), no less than \( \lambda \) if \( \lambda>1 \), and the complexification of \( \delta_* \) is diagonalisable on the generalised eigenspaces corresponding to the eigenvalues of modulus \( \lambda \).
\end{enumerate}	
\end{proposition}

For instance, Proposition~\ref{prop12061651} implies that, if \( \alpha=1 \), then for no \( \lambda\neq1 \) is the map in~\eqref{eq6089355f} a dilation of factor \( \lambda \) for any left-invariant distance on \( \R^2 \).

One observes that in the examples we gave there is not only one dilation but a one-parameter family of dilations \( \delta_\lambda \), one for each factor \( \lambda \in \R^+ \).
In fact, this is the general scenario, up to bi-Lipschitz changes of the distance, as we will explain.

A multiplicative one-parameter subgroup of \( \Aut(G) \), by which we mean a mapping \( \lambda\mapsto\delta_\lambda \) from \( \R^+ \) to \( \Aut(G) \) such that \( \delta_{\lambda\lambda'} = \delta_\lambda\delta_{\lambda'} \), is determined by its infinitesimal generator, which is a derivation \( A \) of \( \mathfrak g \) such that
\begin{equation}\label{eq12061455}
(\delta_\lambda)_* = \expe^{(\log\lambda)A} .
\end{equation}
We say that \( d \) is \emph{\( A \)-homogeneous} if \( \delta_\lambda \) is a metric dilation of factor \( \lambda \) for \( d \) for all \( \lambda \in \R^+ \).

For example, the dilations in~\eqref{eq60893540}, \eqref{eq6089354f}, and \eqref{eq6089355f} are of the form \( \expe^{(\log\lambda)A} \), where \( A \) is the matrix
\[
\begin{pmatrix}\alpha&0\\0&\beta\end{pmatrix},
\qquad
\begin{pmatrix}0&-\alpha\\\alpha&0\end{pmatrix},
\qquad\text{and}\qquad \begin{pmatrix}\alpha&\alpha\\0&\alpha\end{pmatrix}.
\]

In terms of the derivation \( A \),
the characterisation in Proposition~\ref{prop12061651} is equivalent to requiring that the real parts of the eigenvalues of \( A \) are all at least \( 1 \), and that \( A \) is diagonalisable over \( \C \) on the generalised eigenspaces corresponding to the eigenvalues with real part equal to 1.
See \cite[Theorem B]{Golo-LeDonne}.
The standard dilations have this property.

\begin{proposition}[{\cite[Theorem C]{Golo-LeDonne}}]\label{prop12061648}
If \( (G,d,\delta) \) is a self-similar metric Lie group, where \( \delta \) is a metric dilation of factor \( \lambda \), then there is \( A\in\Der(\mathfrak g) \) with eigenvalues in \( [1,\pinfty) \) and an \( A \)-homogeneous distance \( d' \) on \( G \) such that \( \delta \) is also a dilation of factor \( \lambda \) for \( d' \).
Further, for any such \( A \) and \( d' \), the identity mapping from \( (G,d) \) to \( (G,d') \) is bi-Lipschitz.
\end{proposition}

Note that such a bi-Lipschitz change of the distance may be necessary.
For instance, consider the piecewise linear function \( D: [0, \pinfty) \to [0, \pinfty) \) with nodes at \( (0, 0) \), and \( (4^n, 2^n) \), where \( n \in \Z \).
The nodes all lie on the graph \( y = x^{1/2} \), so \( D \) is increasing and concave, and \( d(x, y) \coloneqq  D(|x-y|) \) is a translation-invariant metric on \( \R \).
Then \( \delta(x) = 4x \) is a metric dilation of factor \( 2 \), but \( d \) does not admit dilations whose factors are not integer powers of \( 2 \).
However, \( d \) is bi-Lipschitz equivalent to \( d' \), where \( d'(x,y) = \sqrt{|x-y|} \), which has dilations of every factor.

One interesting consequence of Proposition~\ref{prop12061648} is that the ``imaginary part'' of the derivation \( A \) may be dropped.
For instance, distances with dilations of the form~\eqref{eq6089354f} are bi-Lipschitz equivalent to distances with dilations of the form~\eqref{eq60893540} and \( \alpha=\beta \).
On the contrary, any distance \( d \) with dilations of the form~\eqref{eq6089355f} cannot be bi-Lipschitz equivalent to any distance \( d' \) with dilations of the form~\eqref{eq60893540}, because \( d \) does not attain its conformal dimension, as explained in \cite[Section 6]{Bonk-Kleiner}, while \( d' \) does.

%%%%%%%%%%%%%%%%%%%%%%%%%%%%%%%%%%%%%%%%%%%%%%%%%%%%%%%%%%%%%%%
\subsection{Self-similar Lie groups as tangent spaces}\label{ssec:tangents}

Theorem~D, together with the above description of metrically self-similar Lie groups, leads to a metric characterisation of homogeneous metric spaces that are
\emph{homothetic}, that is, that admit dilations for every positive factor,
see \cite[Theorem E]{Golo-LeDonne}.
In fact, homogeneous homothetic metric spaces are isometric to metrically self-similar Lie groups endowed with a \( A \)-homogeneous distance.

It follows that metrically self-similar Lie groups are the typical tangents
to doubling metric spaces with unique tangents.

\begin{proposition}[{\cite[Theorem F]{Golo-LeDonne}}]\label{prop05011958}
Let \( X \) be a metric space with a doubling measure \( \mu \).
Assume that \( X \) has a unique (\( p \)-dependent) tangent at \( \mu \)-almost every \( p \) in \( X \).
Then for \( \mu \)-almost every \( p \) in \( X \), the tangent to \( X \) at \( p \) is a metrically self-similar Lie group \( G_p \) endowed with an \( A \)-homogeneous distance, for some \( A \) in \( \Der(\mathfrak{g}_p) \).
\end{proposition}

%%%%%%%%%%%%%%%%%%%%%%%%%%%%%%%%%%%%%%%%%%%%%%%%%%%%%%%%%%%%%%%
\subsection{Self-similar Lie groups as parabolic visual boundaries} \label{ssec:Heintze}
A well known motivation for the study of metrically self-similar Lie groups is their appearance as parabolic visual boundaries of negatively curved homogeneous riemannian manifolds.
More precisely, Heintze \cite{Heintze} showed that every simply connected negatively curved homogeneous riemannian manifold is isometric to a riemannian Lie group \( (G, g) \) that is a semidirect product \( N \rtimes_A \R \), where \( N \) is a simply connected nilpotent Lie group and at the Lie algebra level, \( \R \) acts on \( \mathfrak{n} \) by a derivation \( A \) whose eigenvalues have strictly positive real parts.
The parabolic visual boundary of \( (G,g) \)  may be  identified with the Lie group \( N \) endowed with a \( A \)-homogeneous distance, as we will explain below.
It is important to remark that the quasi-isometric classification of Heintze groups is equivalent to the quasi-symmetric classification of their parabolic boundaries,
which in turn reduces to a bi-Lipschitz classification of metrically self-similar Lie groups (see \cite{Kivioja-LeDonne-Nicolussi} and references therein).

We now explain how the parabolic boundary is identified with a metrically self-similar Lie group.
The construction is well known, but we include it here for completeness.
A Heintze group \( G = N \rtimes_A \mathbb{R} \) may always be equipped with a left-invariant infinitesimal riemannian metric \( g \) such that the maximum sectional curvature is \( -1 \), and \( N \times \{0\} \) is orthogonal to \( \{1_N\} \times \mathbb{R}\).
Denote by \( d_g \) the distance function induced by \( g \).

The vertical line with support \(\{1_N\} \times \mathbb{R} \) is length minimising between any two of its points,
as one may check directly by comparing any path with its own projection to the line \(\{1_N\} \times \mathbb{R} \).
By using the left-invariance of the distance, we deduce from this that there exists \( \mu \in \R^+ \) such that all the curves \( s \mapsto (n,\mu s) \), where \( n \in N\), are isometric embeddings.
Let \( \xi \colon [0,\pinfty) \to G \) be the curve \( s \mapsto (1_N,\mu s) \).
Following~\cite[p.~383]{Hersonsky-Paulin}, we define the \introd{parabolic visual boundary} \( \partial_\infty(G,d_g) \) of \( (G,d_g) \) to be the set of isometric embeddings \( \gamma \colon \R \to (G,d_g) \) that satisfy
\begin{equation} \label{eq:asymptotic}
\lim_{s \to \pinfty} d_g( \gamma(s),\xi(s) ) =0 .
\end{equation}
The parabolic visual boundary is then equipped with the so-called Hamenst\"adt distance:
\begin{equation}\label{eq608fcc87}
d(  \sigma,  \gamma) \coloneqq  \exp(- \tfrac{1}{2} \lim_{s\to \pinfty}(2s - d_g(\sigma(-s),\gamma(-s))))
.
\end{equation}
On a Heintze group, the expression in~\eqref{eq608fcc87} need not satisfy the triangle inequality; here it does, because we constructed a \( \mathrm{CAT}(-1) \) metric on \( G \) and chose the unit speed parametrisation of the ray \( \xi \).
See both~\cite[p.~383]{Hersonsky-Paulin} and \cite{Bourdon95} or \cite[Proposition 2.17]{Bourdon18}.

Next we explain how the Lie group \( N \) may be identified with \( \partial_\infty(G,d_g) \).
In one direction, to each element \( n \in N \) we associate the infinite geodesic \( \gamma(s)= (n,\mu s) \).
To show that this is indeed well defined, one needs to verify that \eqref{eq:asymptotic} holds.
We denote by \( \phi_s \) the automorphism of \( N \) with differential \( \expe^{sA} \).
By using the group law of the semidirect product and the left-invariance of \( g \), we calculate that
\begin{equation}\label{eq:calculation}
\begin{aligned}
\lim_{s \to \pinfty} d_g( (n,\mu s+s_0),(1_N,\mu s) )
&=
\lim_{s \to \pinfty} d_g( (1_N,\mu s)(\phi_{-\mu s}n, s_0),(1_N,\mu s) )
\\
&=
\lim_{s \to \pinfty} d_g( ((\phi_{-\mu s}n, s_0),(1_N,0) )
\\
&=
d_g( (1_N,s_0),(1_N,0) ) = s_0/\mu
\end{aligned}
\end{equation}
for any \( s_0 \in \mathbb{R} \).
In particular, putting \( s_0=0 \), the curve \( \gamma \) satisfies \eqref{eq:asymptotic}.

In the other direction, we consider \( \gamma \in \partial_\infty(G,d_g) \) and write \( (n,s_0) \in N \rtimes_A \mathbb{R} \) for the point \( \gamma(0) \).
If we can show that \( s_0 =0 \), that is, \( \gamma(0) \in N \times \{0\} \),  then we will have found a natural map from \( \partial_\infty(G,d_g) \) to \( N \).
Let \( \sigma \) be the infinite geodesic \( s \mapsto (n,\mu s+s_0 ) \). First, from \eqref{eq:calculation},
\[ \lim_{s \to \pinfty} d( \sigma(s),\xi(s) ) = s_0/\mu . \]
We see that
\[
0 \le \lim_{s \to \pinfty} d( \sigma(s),\gamma(s) ) \le \lim_{s \to \pinfty} d( \sigma(s),\xi(s) )  + \lim_{s \to \pinfty} d( \xi(s),\gamma(s) ) \le s_0/\mu .
\]
Thus the triangle formed by the points \( \gamma(0) = \sigma(0) \), \( \sigma(s) \) and \( \gamma(s) \) has two sides of length \( s \) and one side of length at most \( s_0/\mu \).
Because the space \( (G,d_g) \) is \( \mathrm{CAT}(-1) \) and  \( s \) may be taken arbitrarily large, the triangle has to be degenerate, that is, \( \gamma = \sigma \).

Using the correspondence above, we view \( d \) as an admissible left-invariant distance function on \( N\) (to show admissibility, \cite[Theorem A]{Golo-LeDonne} may be used).
A computation analogous to \eqref{eq:calculation} proves that
\begin{align*}
d(\phi_t(n), &\phi_t (n')) \\
&=
\exp(- \tfrac{1}{2} \lim_{s\to \pinfty}(2s - d_g((\phi_t (n),- \mu s),(\phi_t (n'),- \mu s))))
\\
&=\exp(- \tfrac{1}{2} \lim_{s\to \pinfty}(2s - d_g((1_N,t)(n,- \mu s-t),(1_N,t)(n',- \mu s-t))))
\\
&=\exp(- \tfrac{1}{2} \lim_{s\to \pinfty}(2s - d_g((n,- \mu s-t),(n',- \mu s-t))))
\\
&=\exp(- \tfrac{1}{2} \lim_{h\to \pinfty}(2(h - t/\mu) - d_g((n,- \mu h),(n',- \mu h))))
\\
&=\mathrm{e}^{t/\mu}
\exp(- \tfrac{1}{2} \lim_{h\to \pinfty}(2h  - d_g((n,- \mu h),(n',- \mu h))))
\\
&=\mathrm{e}^{t/\mu}  d(n,n') .
\end{align*}
The differential of the map \( \delta_t \coloneqq \phi_{\mu \log(t)} \) is \( (\delta_t)_* =   (\phi_{\mu \log(t)} )_* = \expe^{\mu log(t) A} \), and thus \( t \mapsto \delta_t \) is the one-parameter subgroup of automorphisms associated to the derivation \( \mu A \) and \( d(\delta_t n, \delta_t n') = t  d(n,n')  \)  for all \( n,n' \in N \).
We have thus proved that, after the identification of \( N \) with the parabolic visual boundary, the Hamenst\"adt distance is \( (\mu A) \)-homo\-geneous on \( N \).

%%%%%%%%%%%%%%%%%%%%%%%%%%%%%%%%%%%%%%%%%%%%%%%%%%%%%%%%%%%%%%%
\subsection{Proof of Theorem~D}\label{ssec:proof-thm-main-4}

We restate Theorem~ for the reader's convenience.

\begin{theorem}\label{thm:main-4}
If a homogeneous metric space admits a metric dilation, then it is isometric to a metrically self-similar Lie group.
Moreover, all metric dilations of a metrically self-similar Lie group are automorphisms.
\end{theorem}

The last sentence in Theorem~\ref{thm:main-4} was proved in Corollary~\ref{cor:Siebert-plus}.
Throughout this section, we assume that \( (M, d) \) is a homogeneous metric space, \( \lambda \in (1, \pinfty) \), and \( \delta \) is a bijection of \( M \) such that \( d(\delta x, \delta y) = \lambda d(x,y) \) for all \( x,y \in M \).
Since \( M \) is locally compact and isometrically homogeneous, it is complete, and the Banach fixed point theorem shows that \( \delta \) has a unique fixed point, \( o \) say.
We prove a few preliminary results.

\begin{lemma}\label{lem-dilations}
The metric space \( (M, d) \) is proper and doubling.
\end{lemma}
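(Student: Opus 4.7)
The plan is to exploit the unique fixed point $o$ of $\delta$ together with the scaling identity $\delta(\bar\ball(o,r)) = \bar\ball(o,\lambda r)$, which follows at once from the facts that $\delta(o)=o$, that $\delta$ is bijective, and that $d(\delta x,\delta y) = \lambda d(x,y)$. Homogeneity then lets us transfer everything to an arbitrary base point. Local compactness guarantees, via Lemma~\ref{lem03132256}, some radius $r_0\in \R^+$ with $\bar\ball(o, 2 r_0)$ compact.

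For properness, I would iterate $\delta$. Given any $R\in\R^+$, choose $n\in\N$ large enough that $\lambda^n r_0 \ge R$. Then
\[
\bar\ball(o,R) \subseteq \bar\ball(o,\lambda^n r_0) = \delta^n(\bar\ball(o,r_0)),
\]
and the right hand side is compact as the continuous image of a compact set. Hence $\bar\ball(o,R)$ is closed in a compact set, so is compact. By homogeneity every closed ball is compact, and as $M$ is a metric space, closed bounded subsets of $M$ are exactly closed subsets of large closed balls, so $M$ is proper.

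For the doubling property, I would establish it first at scales lying in the \emph{fundamental annulus} $[r_0, \lambda r_0]$ and then propagate by $\delta$. Since $M$ is now proper, $\bar\ball(o,2\lambda r_0)$ is compact, so it admits a finite cover by balls of radius $r_0$; let $N$ denote the number of such balls. For any $r\in[r_0,\lambda r_0]$ we have $\ball(o,2r)\subseteq \bar\ball(o,2\lambda r_0)$, whence $\ball(o,2r)$ is covered by $N$ balls of radius $r_0 \le r$, which may be enlarged to balls of radius $r$. For arbitrary $r\in\R^+$, pick $n\in\Z$ with $s \coloneqq \lambda^{-n} r \in [r_0, \lambda r_0]$. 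Since $\delta^n$ multiplies all distances by $\lambda^n$ and fixes $o$, a cover of $\ball(o,2s)$ by $N$ balls of radius $s$ pushes forward under $\delta^n$ to a cover of $\ball(o,2r)$ by $N$ balls of radius $r$. Homogeneity then yields the same bound for balls centred at any point of $M$, proving that $(M,d)$ is doubling with constant $N$.

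The only subtlety I anticipate is the dependence on the base point: the dilation $\delta$ only behaves nicely relative to its fixed point $o$, so the scaling step works directly only for balls centred at $o$. However, isometric homogeneity of $M$ moves any ball to one centred at $o$ without distortion, so this is not a real obstacle.
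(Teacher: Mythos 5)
Your argument is correct and follows essentially the same route as the paper: first blow up a compact small ball by iterating $\delta$ to get properness, then cover a fixed compact ball about the fixed point $o$ by finitely many small balls and propagate that cover to all scales via powers of $\delta$, finishing with homogeneity to move the centre. Your version is slightly more explicit about the fundamental annulus $[r_0,\lambda r_0]$ and the normalising power of $\delta$, but the content is identical.
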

\begin{proof}
The ball \( B(o, r) \) is relatively compact for all sufficiently small \( r \); using the dilation we see that this holds for all \( r \in \R \), and \( (M,d) \) is proper.

We now show that \( (M, d) \) is a doubling metric space.
Since the closed ball \( \barB(o, \lambda) \) is compact, there are points \( x_1, \ldots, x_k\in \barB(o, \lambda) \) such that
\[
\barB(o, \lambda)\subseteq\bigcup_{i=1}^k B(x_i, 1/2).
\]
Take \( R\in\R^+ \), and define \( n \coloneqq  \lfloor \log_\lambda R \rfloor \), so that \( 1 \le \lambda^{-n} R < \lambda \).
Then
\[
\delta^{n} B(x_i, 1/2)\subseteq \delta^{n} B(x_i, \lambda^{-n} R/2) = B(\delta^{n}x_i, R/2) ,
\]
and so
\[
B(o, R)
= \delta^{n}(B(o, \lambda^{-n}R))
\subseteq \delta^{n}(B(o, \lambda))\subseteq \bigcup_{i=1}^k B(\delta^{n}x_i, R/2).
\]
Since \( (M, d) \) is isometrically homogeneous, \( (M, d) \) is doubling.
\end{proof}

Let \( H \) denote the connected component of the identity in \( \Iso(M,d) \).

\begin{lemma}\label{lem-MZ}
The space \( M \) is contractible, and \( H \) and \( M \) may be given analytic structures, compatible with their topologies, such that the Lie group \( H \) acts on \( M \) analytically and transitively.
Moreover \( H \) is of polynomial growth.
\end{lemma}

\begin{proof}
Define \( \pi: H \to M \) by \( \pi h \coloneqq  ho \) and \( T:H\to H \) by \( Th\coloneqq \delta\circ h\circ\delta^{-1} \); then \( \pi\circ T = \delta\circ\pi \).
Let \( K \) be the maximal compact normal subgroup of \( H \).
Note that \( T(K)=K \), since \( T \) is an automorphism of \( H \).
Then \( \pi(K) \) is a compact subset of \( M \): let \( r\coloneqq\max\{d(o, p):p\in \pi(K)\} \).
Then
\[
\pi(K) = \pi T^{-1}(K) = \delta^{-1}\pi(K)\subseteq B(o, \lambda^{-1} r),
\]
which implies that \( r=0 \).
Therefore \( \pi(K)=\{o\} \), and \( K \) is contained in the stabiliser in \( H \) of the point \( o \) in \( M \);  by Remark \ref{rem:no-compact-normal-subgroups}, \( K = \{ e_H \} \).
By the Montgomery--Zippin structure theory (as in Theorem \ref{thm:GIMYZ} and Corollary  \ref{cor:isometry-group-is-Lie-group}), \( H \) and \( M \) may be given analytic structures, compatible with their topologies, such that \( M \) is a manifold and the action of \( H \) on \( M \) is analytic.

Since \( M \) is a manifold and admits a metric dilation, it is compactly contractible, and hence contractible by Lemma~\ref{lem:contractible}.
Since moreover \( M \) is doubling and proper by Lemma~\ref{lem-dilations}, it is of polynomial growth by Remark~\ref{rem01041721}.
By Lemma~\ref{lem03171107}, \( H \) is a group of polynomial growth.
\end{proof}

\begin{proof}[Proof of Theorem~\ref{thm:main-4}]
Let \( (M, d) \) be a homogeneous metric space.
Let \( \delta \) be a metric dilation of factor \( \lambda\in(1, \pinfty) \) and with fixed point \( o \).
Let \( H \) denote the connected component of the identity in \( \Iso(M ,d) \).
By Lemma~\ref{lem-MZ}, \( H \) is a Lie group of polynomial growth and hence is amenable, and \( M \) may be identified with \( H/K \), where \( K \) is the stabiliser of \( o \) in \( H \); further, \( M \) is contractible, so \( K \) is a maximal compact subgroup of \( H \) by Lemma \ref{lem:contractible}.

We may now apply Lemma \ref{lem:amenable-contractible}, and deduce that there exists a connected Lie subgroup \( G \) of \( H \) such that the restricted quotient map \( h\mapsto h(o) \) from \( G \) to \( M \) is a homeomorphism.
We use this homeomorphism to make \( G \) into a metrically self-similar Lie group isometric to \( (M, d) \).

Define the metric \( d_G \) on \( G \) by \( d_G(h, h') \coloneqq  d(h(o), h'(o)) \).
It is clear that this is an admissible metric, and it is left-invariant because
\[
d_G(hh', hh'') = d(h (h'(o)), h( h''(o))) = d(h'o, h''o) = d_G(h', h'')
\]
for all \( h, h', h''\in G \).
Further, define the map \( T \) on \( H \) by
\[
Tg \coloneqq  \delta\circ g\circ\delta^{-1} .
\]
Then \( T \) is a Lie group automorphism of \( H \).
Since \( TK=K \), Lemma \ref{lem:amenable-contractible} implies that \( TG=G \).
Thus \( T|_G \) is a Lie group automorphism of \( G \).

We note that after the identification of \( G \) with \( M \), the map \( T|_G \) coincides with \( \delta \).
Indeed,
\[
(Th)(o)
= (\delta h \delta^{-1})(o)
= \delta(ho) ,
\]
and the proof is complete.
\end{proof}

\raggedbottom
\newpage

% ----------------------------------------------------------------
 
\end{document}